\documentclass{article}
\usepackage{graphicx, caption} 

\usepackage{amsmath}
\usepackage{amsthm}
\usepackage{amsfonts}
\usepackage{mathabx}
\usepackage{geometry}
\usepackage{bbm}
\usepackage{multirow}
\usepackage{float}
\usepackage[colorlinks]{hyperref}
\usepackage{cleveref}
\usepackage{biblatex} 
\usepackage[export]{adjustbox}

\usepackage{subcaption}

\DeclareMathOperator*{\argmax}{arg\,max}
\DeclareMathOperator*{\argmin}{arg\,min}
\DeclareMathOperator*{\supp}{supp}
\DeclareMathOperator*{\Span}{span}

\newcommand{\N}{\mathbb{N}}
\newcommand{\Z}{\mathbb{Z}}
\newcommand{\C}{\mathbb{C}}
\newcommand{\R}{\mathbb{R}}
\newcommand{\cH}{\mathcal{H}}

\theoremstyle{definition}

\newtheorem{definition}{Definition}

\theoremstyle{remark}
\newtheorem{remark}{Remark}

\theoremstyle{plain}

\newtheorem{proposition}{Proposition}

\newtheorem{theorem}{Theorem}
\newtheorem{lemma}{Lemma}

\theoremstyle{plain}

\usepackage{algorithm}
\usepackage{algpseudocode}

\hypersetup{
    colorlinks=true,
    linkcolor=blue,
    filecolor=magenta,      
    urlcolor=cyan,
    pdftitle={On the sample complexity of compressed sensing with wavelets and shearlets},
    pdfpagemode=FullScreen,
    }

\title{On the sample complexity of Fourier compressed sensing: wavelets versus shearlets}
\author{Giovanni S. Alberti\footnotemark[1]\thanks{Machine Learning Genoa Center (MaLGa), Department of Mathematics, Department of Excellence
2023-2027, University of Genoa, Italy (\href{mailto:giovanni.alberti@unige.it}{giovanni.alberti@unige.it}, \href{mailto:ayse.isil.guleken@edu.unige.it}{ayse.isil.guleken@edu.unige.it}).} \and Alessandro Felisi\footnotemark[2]\thanks{Institute for Applied Mathematics (IAM), University of Bonn, Germany (\href{mailto:afelisi@uni-bonn.de}{afelisi@uni-bonn.de}).} \and Işıl Güleken\footnotemark[1]}
\date{July 22, 2026}

\begin{document}

\maketitle

\begin{abstract}
This paper explores the measurement requirements for signal recovery in compressed sensing, comparing the performance of shearlet frames with traditional wavelet systems. Directional representation systems such as shearlets are known for their ability to sparsely represent images with anisotropic features, which allows for efficient nonlinear approximations. The central question we address is whether this difference in sparsity allows for a proportional reduction in the number of Fourier measurements needed for successful reconstruction.

On the theoretical front, we study the obstacles encountered when trying to apply standard frame-based recovery results to shearlet systems. First, we show that the (optimal) local coherence between Fourier measurements and cone-adapted shearlets decays more slowly than the corresponding local coherence for wavelets. Second, managing the sparsifying system's redundancy relies on evaluating a localization factor, which requires lower frame bound estimates that can become exponentially small. Thus, even under optimal theoretical conditions, the number of samples required for shearlets scales quadratically with  sparsity, which offers no substantial theoretical reduction over the standard wavelet benchmark.

These theoretical limitations are assessed through a series of numerical experiments on a dataset of piecewise smooth images. While empirical observations confirm that shearlets can accurately represent these images using fewer coefficients than wavelets, phase diagrams indicate that this advantage in sparsity does not yield a proportional reduction in required Fourier samples. Ultimately, we conclude that despite the superior nonlinear approximation rates of shearlets, their practical sample complexity in compressed sensing scenarios with subsampled Fourier measurements remains comparable to that of traditional wavelets.
\end{abstract}

\section{Introduction}

\subsection{Sampling and compressed sensing}

The reconstruction of signals has traditionally relied on the Shannon-Nyquist sampling theorem, which states that a signal must be sampled at a rate at least twice its highest frequency to guarantee exact recovery \cite{Nyquist1928, Shannon1949}. While this principle remains central to digital signal processing, it often requires demanding sampling rates for broadband signals, leading to challenges in data acquisition and storage. Over the past two decades, the framework of compressed sensing has shown that signals can be reliably recovered from a number of measurements well below the conventional Nyquist threshold \cite{Candes2006, Donoho2006}. The most important assumption enabling this reduction in sample complexity is sparsity: the underlying signal can be well-approximated by a small number of non-zero coefficients in a suitable representation system. By leveraging this structural assumption, the recovery of a signal from partial, undersampled measurements is formulated as an underdetermined inverse problem, which can be solved using $\ell^1$-minimization.

The theoretical guarantees for compressed sensing depend fundamentally on the properties of the measurement operator. Initial breakthroughs mostly analyzed dense random matrices, such as those drawn from Gaussian distributions \cite{CandesTao2006}. These matrices exhibit near-optimal mathematical properties, the most notable being the restricted isometry property, which ensures robust recovery of sparse signals with high probability. However, random Gaussian measurements are often not physically realizable. As a result, a vast body of literature has focused on structured sensing modalities, particularly subsampled Fourier measurements. This measurement model accurately reflects practical imaging architectures, such as magnetic resonance imaging (MRI) and radio interferometry \cite{Lustig2007}. In these settings, successful recovery typically relies on analyzing the mutual coherence between the measurement system and the chosen sparsifying frame \cite{CandesRomberg2007, Foucart2013}.

\subsection{Compressed sensing with wavelets and shearlets}\label{sub:CS-intro}

The effectiveness of recovery depends strongly on the choice of the sparsifying basis or frame, which must accurately reflect the structural properties of the signal. In the simplest theoretical scenarios, signals may exhibit sparsity directly in the canonical basis. This setting is highly tractable; for instance, the canonical and Fourier bases are maximally incoherent, providing ideal conditions for uniform recovery guarantees \cite{Foucart2013}. However, complex natural data, such as real-world images, are rarely sparse in the spatial domain and require more sophisticated representation systems to concentrate their energy into a few dominant coefficients. For decades, wavelets have been the standard choice for image processing, owing to their ability to sparsely represent piecewise smooth functions and isolate isotropic point singularities \cite{Mallat1998}. As a result, wavelet systems have been extensively used in compressed sensing, forming the foundation for many standard recovery architectures in imaging. Moreover, when these representations, whether the canonical basis or suitable wavelet systems, are paired with structured measurements like subsampled Fourier measurements, the sample complexity is theoretically optimal. Rigorous analyses demonstrate that under appropriate variable-density sampling strategies, the required number of measurements $m$ scales linearly with the signal's sparsity $s_{\rm wave}$, modulo logarithmic factors \cite{RudelsonVershynin2008, KrahmerWard2014}, namely, 
\begin{equation}\label{eq:sample-complexity-wavelets}
m\gtrsim s_{\rm wave}   . 
\end{equation}

While wavelets are highly effective for isotropic structures, they are suboptimal for capturing highly anisotropic features, such as elongated edges in natural images. To overcome this limitation, directional representation systems were introduced, with curvelets \cite{CandesDonoho2004} and shearlets \cite{Labate2005, Guo2007} emerging as the most prominent frameworks. For the standard mathematical model of \emph{cartoon-like images}—functions that are smooth except along $C^2$ singularity curves—these directional systems offer superior nonlinear approximation properties. Formally, let $f_N$ denote the $N$-term approximation of an image $f$, formed by using only the $N$ largest frame coefficients in absolute value. The nonlinear approximation error is defined as the squared $L^2$-norm of the residual, $\|f - f_N\|_2^2$. For cartoon-like images, this error decays at a rate of $\mathcal{O}(N^{-1})$ for 2D wavelets \cite{Mallat1998}, whereas shearlets and curvelets achieve an essentially optimal decay rate of $\mathcal{O}(N^{-2} (\log N)^3)$ (see \cite{CandesDonoho2004, Guo2007} and Appendix~\ref{shearletAppendix}). This faster decay implies that the effective sparsity $s_{\rm shear}$ of such an image with respect to a shearlet frame is considerably smaller than the sparsity $s_{\rm wave}$ with respect to a wavelet basis. More precisely, ignoring log factors, the above bounds imply 
\begin{equation}\label{eq:swaveshear}
    s_{\rm wave} \asymp s_{\rm shear}^2,
    \end{equation}
    for a prescribed approximation accuracy. 
    This raises the natural question of whether this improved sparsity directly yields proportional improvements in the sample complexity bounds for compressed sensing scenarios, namely,
\begin{equation}\label{eq:sample-complexity-shearlets}
m\gtrsim s_{\rm shear}.
\end{equation}

To investigate this issue, a growing body of literature has integrated shearlets into compressed sensing frameworks, considering either shearlets explicitly \cite{Ma2016thesis, Ma2017} or general frames \cite{rauhut-etal-2008,Candes2011,Krahmer2015,Poon2017, Alberti2021}. However, to the best of our knowledge, rigorous and explicit sample complexity estimates for shearlet systems, relating $m$ and $s_{\rm shear}$ as in \eqref{eq:sample-complexity-shearlets}, are not currently available. Shearlets do not form an orthonormal basis, so classical compressed sensing results for orthonormal bases are not applicable. Existing theoretical studies provide only partial recovery guarantees, as they rely on structural assumptions, coherence bounds, localization and balancing properties that are  difficult to quantify for shearlets. Key partial results are obtained in \cite{Ma2016thesis}, but are not conclusive. A closely related positive result was obtained in \cite{kutyniok-lim-2018}, where the authors prove asymptotic almost optimality for the recovery of cartoon-like functions using dualizable shearlets, anisotropic directional sampling, and a tailored sequence of $\ell^1$ reconstructions. This result is complementary to our setting: it relies on a restricted cartoon-like class, including a necessary non-vanishing curvature condition for the discontinuity curve, as well as on a special shearlet system and a problem-specific sampling/reconstruction procedure, rather than providing an explicit \(m\)-versus-\(s_{\rm shear}\) sample-complexity estimate for the standard shearlet--Fourier compressed sensing framework.

Despite this theoretical gap, empirical evidence seems to support the expected advantages of directional systems. In practical scenarios, such as accelerated MRI, some works suggest that shearlet regularization achieves higher reconstruction quality from  undersampled Fourier data compared to standard wavelet approaches \cite{Ma2018, Yazdanpanah2017}.

\subsection{Main contributions}

To systematically understand this theoretical bottleneck, the first part of our paper investigates the specific mathematical obstacles that arise when applying established compressed sensing recovery theorems for general frames to shearlet systems with subsampled Fourier measurements. The local coherence between cone-adapted shearlets and Fourier measurements can be adequately bounded as shown in \Cref{subsecLocalCoherence}, yielding an (optimal) estimate that is worse than the corresponding bound for wavelets. Controlling the redundancy of the sparsifying frame remains unfeasible using existing mathematical methods, as discussed in \Cref{subsecLocalizationFactor}. Specifically, existing recovery guarantees rely heavily on the localization factor of the frame. We show that bounding this factor requires estimating the optimal lower frame bounds for arbitrary finite subframes of shearlets, and these bounds may be exponentially small. 
However, even in the best-case scenario when the localization factor is of order one, for cartoon-like images the coherence estimates derived in \Cref{subsecLocalCoherence} yield a sample complexity bound of the form
\[
m\gtrsim s_{\rm shear}^2,
\]
see \Cref{sub:comparing}. Surprisingly, this bound does not give the desired sample complexity \eqref{eq:sample-complexity-shearlets} and, in view of \eqref{eq:swaveshear}, is comparable to \eqref{eq:sample-complexity-wavelets} for cartoon-like images.

To complement this theoretical analysis, \Cref{sectionNumerics} presents a comprehensive numerical investigation into the practical performance of these representation systems. We first empirically confirm the expected nonlinear approximation behavior: for a generated dataset of piecewise regular, cartoon-like images, the number of coefficients required for an accurate representation is substantially smaller using a shearlet transform compared to a standard wavelet transform. However, our main numerical finding is that this significant advantage in sparsity does not translate into a proportional reduction in sample complexity. Through a series of phase diagrams, we systematically compare the recovery success rates of both sparsifying frames under varying Fourier undersampling ratios and varying levels of signal complexity. These experiments reveal that the number of measurements required to successfully recover the same class of images is  comparable for both wavelets and shearlets. Ultimately, our numerical results demonstrate that the superior approximation rates of directional systems do not automatically guarantee a proportional improvement in Fourier sample complexity. This is consistent with the theoretical analysis of the problem described in the previous paragraph.

\subsection{Structure of the paper}

The remainder of this paper is organized as follows. \Cref{sec:frames} establishes the mathematical foundation by reviewing the necessary concepts from frame theory and the abstract formulation of the compressed sensing recovery problem. In \Cref{sectionShearletUnknowns}, we introduce the continuous and discrete shearlet frameworks and detail the theoretical obstacles encountered when attempting to bound the required coherence and localization factors. Finally, \Cref{sectionNumerics} is dedicated to our numerical experiments, presenting the phase diagrams that systematically compare the empirical sample complexity of wavelets and shearlets. Additional technical details, including formal definitions of parabolic molecules and shearlets, are deferred to the appendices.

\section{Compressed sensing with frames}\label{sec:frames}

Let $\mathcal{H}$ be a separable complex Hilbert space, representing our signal space, which may be finite or infinite dimensional. The main problem of compressed sensing is the recovery of an unknown signal $g^\dagger \in \mathcal{H}$ from partial measurements of the form $(\langle g^\dagger, \varphi_l\rangle_\mathcal{H})_l$, under a sparsity assumption on $g^\dagger$ with respect to a suitable family of vectors $\{\psi_i\}_i$. A common assumption made on the two sequences $\{\varphi_l\}_l$ and $\{\psi_i\}_i$ (as in \cite{Poon2017,Alberti2021}), which we will later refine based on our situation, is that they both form frames.

\subsection{Frame theory setting}\label{sec:frametheory}

We provide here a short introduction to the concepts we need from frame theory. For further details, the reader is referred to \cite{Christensen2016}. We say that a subset of $\mathbb{N}$ is an \emph{index set} if it is either $\mathbb{N}$ or $\{1, \dots, n\}$ for some $n \in \mathbb{N}$. 

\begin{definition}
Let $I \subseteq \mathbb{N}$ be an index set.

\begin{itemize}
\item We say that a family $\{f_i\}_{i \in I} \subseteq \mathcal{H}$ is a \emph{frame} for $\mathcal{H}$ if there exist real constants $A,B > 0$ such that
$$
A \|f\|^2 \leq \sum_{i \in I} |\langle f, f_i \rangle|^2 \leq B \|f\|^2,
\quad \text{for all } f \in \mathcal{H}.
$$
If $A=B$, we say that the frame is \emph{tight}. If $A=B=1$, we say that the frame is \emph{Parseval} (because the above becomes Parseval's identity). 

\item Given a frame $\{f_i\}_{i \in I}$, the \emph{analysis operator} $T \colon \mathcal{H} \to \ell^2(I)$ is defined by
$$
Tf = \{\langle f, f_i \rangle\}_{i \in I}, \quad f \in \mathcal{H}.
$$

\item The \emph{synthesis operator} $T^* \colon \ell^2(I) \to \mathcal{H}$ associated to the frame $\{f_i\}_{i \in I}$ is the adjoint of $T$, given by
$$
T^* c = \sum_{i \in I} c_i f_i, \quad c = \{c_i\}_{i \in I} \in \ell^2(I).
$$

\item The \emph{frame operator} $S \colon \mathcal{H} \to \mathcal{H}$ associated to the frame $\{f_i\}_{i \in I}$ is defined by $S=T^* T$, namely,
$$
Sf = T^* T f = \sum_{i \in I} \langle f, f_i \rangle f_i, \quad f \in \mathcal{H}.
$$
\end{itemize}
\end{definition}

Let $L$ and $I$ be index sets. Consider two frames $\{\varphi_l\}_{l \in L}$ and $\{\psi_i\}_{i \in I}$ for $\mathcal{H}$, with analysis operators $V$ and $D$, respectively. Let their optimal frame bounds be given by $B_V \geq A_V > 0$ and $B_D \geq A_D > 0$.

The \emph{dual frames} of $\{\varphi_l\}_{l \in L}$ and $\{\psi_i\}_{i \in I}$, denoted by $\{\Tilde{\varphi}_l\}_{l \in L}$ and $\{\Tilde{\psi}_i\}_{i \in I}$, are given by 
$$\Tilde{\varphi}_l = (V^*V)^{-1}\varphi_l, \,\,\, \text{ and } \,\,\, \Tilde{\psi}_i = (D^*D)^{-1}\psi_i.$$
The \emph{Moore-Penrose pseudoinverses} of $V$ and $D$ are 
$$V^{-1} = (V^*V)^{-1}V^*, \,\,\, \text{ and } \,\,\, D^{-1} = (D^*D)^{-1}D^*,$$
which are the left inverses of $V$ and $D$, respectively. 
Later on we will need the following result that relates the eigenvalues of a frame operator to the optimal frame bounds.

\begin{proposition}[{\cite[Theorem 1.3.1 (i)]{Christensen2016}}] \label{frameBound}
        Let $\{f_i\}_{i=1}^N$ be a finite frame for a finite-dimensional Hilbert space $\mathcal H$, with frame operator $S$. Then the optimal lower frame bound is the smallest eigenvalue of $S$ and the optimal upper frame bound is the largest eigenvalue of $S$.
\end{proposition}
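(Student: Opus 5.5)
The plan is to reduce the frame inequality to a statement about the quadratic form of the frame operator $S$ and then apply the spectral theorem for self-adjoint operators on a finite-dimensional space. First I will record the identity
\[
\langle Sf, f\rangle = \Big\langle \sum_{i=1}^N \langle f, f_i\rangle f_i, f\Big\rangle = \sum_{i=1}^N |\langle f, f_i\rangle|^2 = \|Tf\|^2, \qquad f\in\mathcal H ,
\]
which follows directly from $S=T^*T$. It shows that $S$ is self-adjoint and positive semidefinite. Because $\mathcal H$ is finite dimensional, $S$ therefore has an orthonormal eigenbasis $\{e_k\}_{k=1}^n$ with real eigenvalues $\lambda_1\le\dots\le\lambda_n$, all of them $\ge 0$.

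Next I will expand an arbitrary $f=\sum_k c_k e_k$ in this basis, which gives $\langle Sf,f\rangle=\sum_k \lambda_k |c_k|^2$. Combined with $\|f\|^2=\sum_k |c_k|^2$, this yields the Rayleigh bounds
\[
\lambda_1\|f\|^2 \le \sum_{i=1}^N |\langle f, f_i\rangle|^2 \le \lambda_n \|f\|^2 .
\]
So $\lambda_1$ is an admissible lower frame bound and $\lambda_n$ an admissible upper frame bound. Since $\{f_i\}$ is assumed to be a frame, there is some $A>0$ with $\langle Sf,f\rangle \ge A\|f\|^2$. Taking $f=e_1$ gives $\lambda_1\ge A>0$, which confirms that $\lambda_1$ is a legitimate (strictly positive) frame bound.

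For optimality, I will test the frame inequality on eigenvectors. Any lower bound $A$ satisfies $A\|e_1\|^2 \le \langle Se_1,e_1\rangle = \lambda_1$, so $A\le\lambda_1$. Likewise, any upper bound $B$ satisfies $B\ge \langle Se_n,e_n\rangle=\lambda_n$. Hence the supremum of the lower bounds is $\lambda_1$ and the infimum of the upper bounds is $\lambda_n$, and both are attained.

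There is no genuine obstacle here. The only points that need care are checking self-adjointness of $S$, so that the spectral theorem applies and the eigenvalues are real, and making explicit that "optimal" means the largest lower bound and the smallest upper bound.
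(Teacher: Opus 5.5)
Your proof is correct and is essentially the standard argument behind this result (the paper gives no proof of its own and simply cites Christensen's Theorem 1.3.1). That argument expands $f$ in an orthonormal eigenbasis of the self-adjoint operator $S$ to get $\lambda_{\min}\|f\|^2\le\langle Sf,f\rangle\le\lambda_{\max}\|f\|^2$, then tests on the extremal eigenvectors to show both bounds are optimal.
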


\subsection{Compressed sensing setting}
\label{sec:compressed_sensing_setting}

The recovery problem can be stated as follows. We consider noisy partial measurements of $V g^\dagger$ of the form
$$
y_\Omega = P_\Omega V g^\dagger + \gamma_\Omega,
$$
where $\Omega$ denotes the sampled measurement indices, $P_\Omega$ denotes the orthogonal projection onto $\Span\{e_l : l \in \Omega\}$,  $\{e_l\}_{l}$ is the canonical basis of $\ell^2$ and $\gamma$ represents measurement noise with $\|\gamma_\Omega\|_2 \leq \varepsilon_\Omega$ for some noise level $\varepsilon_\Omega \geq 0$. If $\Omega = \{1, \dots, N\}$, we simply denote it by $P_N$. Our aim is to recover the
signal $g^\dagger \in \mathcal{H}$, under the assumption that $D g^\dagger$ is sparse (see below for a precise definition). The classical way to solve the sparse recovery problem is the following $\ell^1$ minimization problem:
\begin{equation}
\inf_{\substack{g \in \mathcal{H}, \\ Dg \in \ell^1(I)}} \|Dg\|_1 \quad \text{ subject to } \quad \|P_\Omega Vg - y_\Omega\|_2 \leq \varepsilon_\Omega. \label{analysisFormulation}  
\end{equation}
This is the so-called \emph{analysis formulation}, in which the analysis operator $D$ appears directly in the $\ell^1$ norm.

We will present the recovery result from \cite{Krahmer2015}. For the rest of this section, we assume that $D$ is Parseval. We first introduce the quantities appearing in the theorem. We start by defining the local coherence, which modifies the classical coherence by adding a dependence on $l\in L$.

\begin{definition}[Coherence] The \emph{local coherence} of a sequence $\boldsymbol{\psi} = \{\psi_i\}_{i\in I}$ with respect to another sequence $\boldsymbol{\varphi} = \{\varphi_l\}_{l \in L}$ is the function $\mu^{\rm loc}(\boldsymbol{\psi}, \boldsymbol{\varphi})$ defined coordinate-wise as

$$\mu^{\rm loc}_l(\boldsymbol{\psi}, \boldsymbol{\varphi}) = \mu^{\rm loc}_l = \sup_{i \in I} |\langle \psi_i, \varphi_l \rangle|.$$

\end{definition}

As anticipated above, our construction is based on the assumption that $x=Dg^\dagger$ is sparse. 
\begin{definition}[Sparsity]
Let $N\in\mathbb{N}$. Let $x\in\mathbb{C}^N$ or $x\in\ell^2(\mathbb{N})$. Let $s \in \mathbb{N}$ with $s \leq N$.  
We say that $x$ is $(s,N)$-\emph{sparse} if $\supp(x) \subseteq \{1, \dots, N\}$ and $|\supp(x)| \leq s$ and denote the set of $(s,N)$-sparse vectors by $\Sigma_{s,N}$. We use the notation
$$
\sigma_{s,N}(x) = \min_{x_0 \in \Sigma_{s,N}} \{\|x - x_0\|_1\}.
$$
\end{definition}
In practice, $x=Dg^\dagger$ may not be exactly sparse, but compressible, in the sense that the approximation error $\sigma_{s,N}(x)$ is small.

Next, we introduce the localization factor.

\begin{definition}\label{def:localization_factor}
    We define the \emph{localization factor} $\eta_{s, D}$ as
    $$\eta_{s, D} = \sup\bigg\{\frac{\|Dg\|_1}{\sqrt{s}} : |\Delta| = s, \, g\in \operatorname{Im}(D^*P_\Delta), \, \|g\|_{\mathcal{H}} = 1\bigg\}.$$
\end{definition}

This quantity measures the redundancy of the sparsifying system $\{\psi_i\}_{i\in I}$. Indeed, if we have an orthonormal basis, for which $D$ is unitary, by Cauchy-Schwarz we have $\eta_{s, D}=1$ \cite{Krahmer2015}. However, $\eta_{s, D}$ may be arbitrarily large with highly redundant frames.

\subsection{An abstract recovery result}

A numerical implementation of the analysis formulation \eqref{analysisFormulation} is not possible when $I$ is infinite. Thus, for the numerical experiments with wavelet and shearlet sparsifying frames in \Cref{sectionNumerics}, we restrict to the case where the index sets for the sparsifying and measurement frames are $I = \{1, \cdots, N\}$ and $L = \{1, \cdots, M\}$ respectively. We also consider the case where the  sparsifying frame is Parseval and the measurement frame is an orthonormal basis. This covers the setting with sparsity modeled by wavelets or bandlimited shearlets and with Fourier measurements.

\begin{theorem}[{\cite[Corollary 2.9]{Krahmer2015}}]\label{corollary2.9}
Let $N, s \in \mathbb{N}$ be such that $s < N$. Let $D\in\mathbb{C}^{N\times M}$ with rows $\{\psi_1,\ldots,\psi_N\}$ be a Parseval frame and let $V \in \mathbb{C}^{M \times M}$ with rows $\{\varphi_1,\ldots,\varphi_M\}$ be an orthonormal basis of $\mathbb{C}^M$. Let $\omega \in \mathbb{R}^M_+$ be such that
\begin{equation}\label{eq:muk-omegak}
    \mu_k^{\rm loc}(\{\psi_i\}_{i = 1}^N, \{\varphi_l\}_{l = 1}^M) \leq \omega_k, \quad 1 \leq k \leq M.
\end{equation}

Set the diagonal matrix $W = \mathrm{diag}(\mathbf{w})\in\mathbb{C}^{M\times M}$ with $\mathbf{w}_l=\|\omega\|_2/\omega_l$. Assume that the following holds:

\begin{enumerate}
    \item let 
    \begin{equation}\label{eq:cor:m}
    m \ge C_1\eta_{s, D}^2\|\omega\|_2^2s\log^3(s\eta_{s, D}^2)\log(N)
    \end{equation}
    where $C_1 > 0$ is a universal constant;

    \item let $m$ indices $\{l_1, \dots, l_m\}$ be sampled independently from $\{1, \dots, M\}$ according to the probability distribution
    $\nu$ on $\{1, \dots, M\}$ given by 
    \begin{equation}\label{eq:nul-omegal}
        \nu_l=\frac{\omega_l^2}{\|\omega\|_2^2}, \quad 1 \leq l \leq M,
    \end{equation} and set $\Omega = \{l_1, \dots, l_m\}$ (with possible repetitions to be kept);

    \item take $g^\dagger \in \mathbb{C}^M$ and noise $\gamma \in \mathbb{C}^M$ with weighted error $\left\|\frac{1}{\sqrt{m}}W\gamma\right\|_2\leq \varepsilon$ and set $\zeta = P_\Omega Vg^\dagger+\gamma$. Let $g \in \mathbb{C}^M$ be a solution of the weighted $\ell^1$-analysis problem
    \begin{equation}\label{eq:weighted-noise}\argmin_{g\in\mathbb{C}^M}\|Dg\|_1
    \quad\text{subject to}\quad
    \left\|\frac{1}{\sqrt{m}}W(P_\Omega Vg-\zeta)\right\|_2 \leq \varepsilon.
     \end{equation}
\end{enumerate}

Then, with probability $1-N^{-\log^3(2s)}$, we have
$$
\|g - g^\dagger\|_2 \leq C_2\frac{\sigma_{s, N}(Dg^\dagger)}{\sqrt{s}} + C_3\varepsilon,
$$
where $C_2, C_3 > 0$ are universal constants.
\end{theorem}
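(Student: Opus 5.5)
This is Corollary 2.9 of Krahmer–Needell–Ward, and I would prove it by the standard route: a weighted restricted isometry property adapted to $D$ (the D-RIP) plus a robust null space argument for the analysis problem. First, preconditioning. Let $A = \frac{1}{\sqrt m} W P_\Omega V$, whose rows are $\frac{1}{\sqrt m}\,\frac{\|\omega\|_2}{\omega_{l_j}}\varphi_{l_j}$. Because the $l_j$ are drawn i.i.d.\ from $\nu$ in \eqref{eq:nul-omegal}, each rescaled row $a_j = \frac{\|\omega\|_2}{\omega_{l_j}}\varphi_{l_j}$ satisfies
\[
\mathbb{E}[a_j a_j^*] = \sum_l \nu_l \frac{\|\omega\|_2^2}{\omega_l^2}\varphi_l\varphi_l^* = V^*V = I .
\]
So $A^*A$ is an unbiased estimator of the identity. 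Moreover, by \eqref{eq:muk-omegak}, $|\langle \psi_i, a_j\rangle| \le \|\omega\|_2$ uniformly in $i$ and $j$. This is a bounded orthonormal system, with bound $K=\|\omega\|_2$, relative to the dictionary $\{\psi_i\}$.

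Second, I would show that with the stated probability $A$ satisfies the D-RIP of order $s' \asymp s\eta_{s,D}^2$:
\[
(1-\delta)\|g\|^2 \le \|Ag\|^2 \le (1+\delta)\|g\|^2 \quad \text{for all } g = D^*z,\ z \text{ being } s'\text{-sparse}.
\]
This is the main obstacle. I would follow Rudelson–Vershynin / Rauhut. Symmetrize, then bound the expected supremum of a Rademacher process over the set $\{D^*z : \|D^*z\|_2\le1,\ z\ s'\text{-sparse}\}$ via Dudley's entropy integral. The covering numbers are estimated in the seminorm $\max_j |\langle a_j, g\rangle|$. The key point is that any unit $g$ in this set lies in the convex hull of $\sqrt{s'}$ times (up to $\eta$) the vectors $\pm\psi_i$. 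Indeed, since $D$ is Parseval, $g = D^*Dg$, and $\|Dg\|_1 \le \eta_{s',D}\sqrt{s'}$ by \Cref{def:localization_factor}. Maurey's empirical method then gives covering numbers of order $\frac{s' K^2}{\varepsilon^2}\log N \log m$. This produces the condition $m \gtrsim K^2 s'\log^3(s')\log N$, which is \eqref{eq:cor:m} with $s'=s\eta_{s,D}^2$. The high-probability statement follows from a Talagrand/Bernstein-type concentration inequality for the supremum; the failure probability $N^{-\log^3(2s)}$ comes from tuning constants.

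Third, deterministic recovery. Assuming the D-RIP of order about $s\eta^2$ holds with a small $\delta$, take a minimizer $g$ of \eqref{eq:weighted-noise} and set $h=g-g^\dagger$. The weighted noise bound gives $\|Ah\|_2\le 2\varepsilon$, and minimality gives $\|Dg\|_1\le\|Dg^\dagger\|_1$. Split $Dh$ into the index set $T_0$ of the $s$ largest entries of $Dg^\dagger$, followed by blocks $T_1, T_2, \dots$ of size $s'$ ordered by decreasing magnitude. The cone constraint then gives $\|(Dh)_{T_0^c}\|_1 \le \|(Dh)_{T_0}\|_1+2\sigma_{s,N}(Dg^\dagger)$. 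Using $h = D^*Dh$, expand $h = \sum_k D^*P_{T_k}Dh$ and apply the D-RIP to each term $D^*P_{T_k}Dh$, together with the standard tail estimate $\sum_{k\ge2}\|P_{T_k}Dh\|_2\le s'^{-1/2}\|(Dh)_{T_0^c}\|_1$. This yields
\[
\|h\|_2 \le C_2\,\sigma_{s,N}(Dg^\dagger)/\sqrt{s} + C_3\varepsilon,
\]
where the factor $\eta_{s,D}$ in $s'$ absorbs the mismatch between $\|P_{T_0}Dh\|_1$ and $\sqrt{s}\|h\|_2$. Combining the three steps proves the theorem.
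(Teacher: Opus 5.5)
The paper gives no proof of this statement; it is quoted from Krahmer--Needell--Ward. Your outline follows the route of that source: isotropic preconditioning with uniform bound $K=\|\omega\|_2$; a D-RIP via Rudelson--Vershynin/Rauhut chaining, where the localization factor places the synthesis set in a scaled convex hull of the atoms for Maurey's lemma; then a Cand\`es--Eldar--Needell--Randall-type deterministic recovery step.

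As written, however, Steps 2 and 3 do not fit together. In Step 2 you claim the D-RIP of \emph{order} $s'\asymp s\eta_{s,D}^2$ from $m\gtrsim K^2s'\log^3(s')\log N$. But the $\ell^1$ bound you invoke for $s'$-sparse synthesis vectors is $\|Dg\|_1\le\eta_{s',D}\sqrt{s'}$. Maurey's lemma then gives entropy of order $\eta_{s',D}^2\,s'K^2u^{-2}\log N\log m$, so with $s'=s\eta_{s,D}^2$ you would need $m\gtrsim K^2\eta_{s',D}^2\eta_{s,D}^2\,s\cdots$, which is not \eqref{eq:cor:m}. The factor $\eta$ is used once to inflate the order and then silently dropped from the covering number. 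What a bound of the form \eqref{eq:cor:m} actually buys is a D-RIP of order comparable to $s$. The product $s\eta^2$ is an effective sparsity that enters only through the $\ell^1$-radius $\eta\sqrt{s}$ in the entropy estimate, not through the order of the near-isometry. Step 3 then applies the D-RIP to blocks of size $s\eta^2$, where it has not been established.

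The repair is that Step 3 needs no $\eta$, and there is no mismatch for it to absorb: $\|P_{T_0}Dh\|_1\le\sqrt{s}\,\|P_{T_0}Dh\|_2\le\sqrt{s}\,\|Dh\|_2=\sqrt{s}\,\|h\|_2$ by Cauchy--Schwarz and the Parseval property. Use blocks $T_1,T_2,\dots$ of size $Cs$ for a fixed constant $C$ (say $C=16$). Start from $\|h\|_2=\|D^*Dh\|_2\le\|D^*P_{T_{01}}Dh\|_2+\sum_{k\ge2}\|P_{T_k}Dh\|_2$, using $\|D^*\|=1$. Apply the lower D-RIP bound to the first term together with $\|Ah\|_2\le2\varepsilon$, and the upper bound to each block. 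This gives $\|h\|_2\le\frac{2\varepsilon}{\sqrt{1-\delta}}+\bigl(1+\sqrt{(1+\delta)/(1-\delta)}\bigr)\sum_{k\ge2}\|P_{T_k}Dh\|_2$. The tail and cone estimates give $\sum_{k\ge2}\|P_{T_k}Dh\|_2\le C^{-1/2}\|h\|_2+2\sigma_{s,N}(Dg^\dagger)/\sqrt{Cs}$, and the $\|h\|_2$ term is absorbed for small $\delta$. Alternatively, use the inner-product identity $\|P_{T_{01}}Dh\|_2^2=\langle D^*P_{T_{01}}Dh,h\rangle$ as in Cand\`es--Eldar--Needell--Randall.

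This only requires a D-RIP of order $(C+1)s$, which Step 2 delivers at the stated $m$ once orders are tracked correctly. Strictly, the localization factor then appears at order $(C+1)s$ rather than $s$. That constant-multiple issue is inherent in any D-RIP-based route and is not specific to your argument.
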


As discussed in \cite{Krahmer2015}, the weighted noise model with the diagonal matrix $W$ in \eqref{eq:weighted-noise} may be an artifact of the proof technique. Analogous results with an unweighted noise model are present in \cite{alberti2025compressedsensinginverseproblems}. Thus, in the numerical simulations, we will consider the simpler unweighted model \eqref{analysisFormulation}.

Related recovery results are available in the infinite-dimensional setting, see \cite{Alberti2021, Poon2017}. In particular, in   \cite[Corollary 1]{Alberti2021} the authors consider arbitrary frames (not necessarily Parseval) for both measurement and sparsifying systems. Under more general restrictions on the frames, the results in \cite{Poon2017} consider the framework of multilevel sampling and multilevel sparsity, originally introduced in \cite{Poon2017-2}. Both these infinite-dimensional approaches involve a \textit{balancing property}, which quantifies how large the measurement truncation must be. Furthermore, additional constants that depend on the redundancy of the sparsifying frame need to be estimated to obtain a lower bound on the number of samples required. See   Appendix~\ref{metaResultAppendix} for additional details on these infinite-dimensional results.

\begin{remark}\label{remarkUnknownQuantities}
    The number of samples $m$ needed to satisfy both the assumptions of \Cref{corollary2.9} and the infinite-dimensional results (see Appendix~\ref{metaResultAppendix}) and their recovery-error bounds depend on the following quantities:
    \begin{itemize}
    \item the sparsity $s$ of the unknown $g^\dagger$ with respect to $D$, through the approximation error $\sigma_{s,N}(Dg^\dagger)$;
    \item  the local coherence between the sparsifying and the measurement frames $\mu^{\rm loc}(\{\psi_i\}_{i\in I}, \{\varphi_l\}_{l\in L})$;
    \item the localization factor $\eta_{s, D}$, measuring the redundancy of the sparsifying frame.

    \end{itemize}
    
\end{remark}

It has been shown in \cite{Guo2007} that shearlets perform better than wavelets in representing a commonly used class of functions called \emph{cartoon-like images} (\cite{CandesDonoho2004}, see Appendix~\ref{shearletAppendix}), as anticipated in the Introduction. In other words, for this class of functions, the quantity $\sigma_{s,N}$ is expected to be smaller for shearlets than for wavelets. Because of the other two factors $\eta_{s,D}$ and $\mu^{\rm loc}$, it is unclear from this result whether the sample complexity estimate given in \eqref{eq:cor:m} is better for wavelets or shearlets. In the following two sections, we address this question from both theoretical and numerical viewpoints, respectively.

\section{Obstacles to recovery results for shearlets} \label{sectionShearletUnknowns}
In this section, we study how \Cref{corollary2.9} could be applied when shearlets are used as the sparsifying frame. More precisely, we shall try to estimate the local coherence $\mu^{\rm loc}$ and the localization factor $\eta_{s,D}$. 

\subsection{A brief introduction to shearlets}
\label{sec:introduction_shearlets}

First, we provide a brief introduction to shearlets. For additional details, see Appendix~\ref{shearletAppendix}.

We are interested in the case where the sparsifying frame $\{\psi_i\}_{i\in I}$ of $L^2(\mathbb{R}^2)$ consists of wavelet-like functions, allowing us to compare their performance against standard wavelets. One such class of functions, called \emph{shearlets}, is a representation system used to encode highly anisotropic features, originally introduced in \cite{Labate2005}. In many imaging applications, important structures, such as edges in natural images, are typically elongated and oriented along curves. While classical wavelets are well-suited to capture isotropic structures, they are less efficient at representing these directional phenomena. Shearlets were introduced to address this limitation by providing a multiscale system whose elements can adapt to both the scale and the orientation of anisotropic features.

The main idea is to construct shearlets from a single generating function using three types of operations, which are parabolic scaling, shearing and translation. In the continuous setting, a shearlet system is obtained by applying these operations to a mother shearlet $\psi\in L^2(\mathbb{R}^2)$, producing functions that become increasingly elongated at finer scales while also varying their orientation. This construction allows shearlets to efficiently capture geometric structures such as edges aligned along curves. More precisely, shearlets are given by
$$
\psi_{a,s,t}(x) = a^{-3/4}\psi(D_a^{-1}B_s^{-1}(x-t)),
$$
where $a > 0$ is the scale,  $s \in \mathbb{R}$ is the shearing parameter representing the orientation, $t \in \mathbb{R}^2$ is the location in space, and the parabolic scaling and shearing matrices are given by
$$
D_{a} = \begin{pmatrix}
a & 0 \\
0 & \sqrt{a} 
\end{pmatrix}, \quad B_s = \begin{pmatrix}
1 & s \\
0 & 1 \end{pmatrix},
$$
respectively. By varying these parameters we obtain a rich family of directional elements capable of capturing anisotropic features across multiple scales. For an accessible introduction to the continuous shearlet framework, we refer to \cite{Kutyniok2012}.

Parabolic scaling means that one spatial direction is refined faster than the other, producing elements whose effective support obeys a relationship of the form $\text{width} \approx \text{length}^2$. In what is called the \emph{cone-adapted} setting, the frequency plane is separated into two directional cones, corresponding roughly to predominantly horizontal and predominantly vertical directions, together with a low-frequency region around the origin. The directional shearlet elements are used to analyze the two cones, while the low-frequency part is handled separately by a scaling function. More specifically, shearlets associated to different cones are scaled parabolically due to the matrices $A_{(a, 1)}$ and $A_{(a, 2)}$ and they both display a form of directionality due to the shearing matrices $B_s$. These dilation matrices are defined by
$$
A_{(a, 1)} = \begin{pmatrix}
a & 0 \\
0 & \sqrt{a} 
\end{pmatrix}, \quad A_{(a, 2)} = \begin{pmatrix}
\sqrt{a} & 0 \\
0 & a \end{pmatrix}.
$$

Shearlet parameters are usually discretized by sampling the parameter set of continuous shearlet systems. In such a discretization, the continuous scale, shearing and translation parameters are replaced by countable subsets. The scale is indexed by an integer $j$, the shear parameter by an integer $\ell$ depending on the scale and the location by a lattice point $k\in\mathbb{Z}^2$. Thus, the discrete system can be viewed as a structured sampling of the continuous family, chosen so that the resulting elements still cover different scales, orientations and spatial positions. To define one commonly used such discretization, we use the following notation for the sampling matrices, 
$$
M_{(c, 1)} = \begin{pmatrix}
c_1 & 0 \\
0 & c_2
\end{pmatrix}, \quad M_{(c, 2)} = \begin{pmatrix}
c_2 & 0 \\
0 & c_1
\end{pmatrix}, \quad c = (c_1, c_2) \in (\mathbb{R}^+)^2.
$$

\begin{definition}[{\cite[Definition 2.2]{Kutyniok2012-2}}] \label{shearletsInSpace}
    For some sampling vector $c=(c_1,c_2)\in(\mathbb{R}^+)^2$, the \emph{(cone-adapted) regular discrete shearlet system}
    $\mathcal{SH}(c;\Phi,\psi^{(1)},\psi^{(2)})$ generated by a scaling function $\Phi\in L^2(\mathbb{R}^2)$ and horizontal and vertical mother
    shearlets $\psi^{(1)},\psi^{(2)}\in L^2(\mathbb{R}^2)$ respectively is defined by
    $$
    \mathcal{SH}(c;\Phi,\psi^{(1)},\psi^{(2)})
    = \Psi_0(c,\Phi)\cup \Psi_1(c,\psi^{(1)})\cup \Psi_2(c,\psi^{(2)}),
    $$
    where
    $$
    \Psi_0(c,\Phi)=\{\psi_{-1,k}=\Phi(\,\cdot - c_1 k): k\in\mathbb{Z}^2\},
    $$
    $$
    \Psi_1(c,\psi^{(1)})=\{\psi^{(1)}_{j,\ell,k}=2^{3j/4}\psi^{(1)}(B_{-\ell} A_{(2^j, 1)}\,\cdot - M_{(c, 1)} k):
    j \geq 0,\ |\ell| \leq \lceil 2^{j/2} \rceil,\ k\in\mathbb{Z}^2\},
    $$
    and
    $$
    \Psi_2(c,\psi^{(2)})=\{\psi^{(2)}_{j,\ell,k}=2^{3j/4}\psi^{(2)}(B_{-\ell}^T A_{(2^j, 2)}\,\cdot - M_{(c, 2)} k):
    j\geq 0,\ |\ell| \leq \lceil 2^{j/2} \rceil,\ k\in\mathbb{Z}^2\}.
    $$
\end{definition}

\begin{remark}\label{rem:c1c2}
    For the digital implementation of cone-adapted shearlets, which will be relevant for our numerical experiments in \Cref{sectionNumerics}, the finite dimensionality of the compressed sensing problem naturally gives rise to a shearlet reconstruction system with only finitely many elements. The common choice of subsampling matrices for such a setting is such that the resulting finite shearlet system becomes highly redundant. More specifically, for a shearlet system up to a certain finest scale $j_0 \in \mathbb{Z}$, the sampling matrices $M_{(c, 1)} = \text{diag}(c_1, c_2)$ and $M_{(c, 2)} = \text{diag}(c_2, c_1)$ are chosen to depend on scale such that $c_1 = c_1^j = 2^{j - j_0}$ and $c_2 = 2^{j/2 - j_0}$ (see \cite[Section 3.4]{Kutyniok2016}). This results in the relevant digital shearlet transform to produce an equal number of shearlet coefficients for each feasible scale and shearing parameter pair, instead of the number of shearlet coefficients varying for different scales $j$ where $0 \leq j \leq j_0$ \cite{Kutyniok2016}. For the theoretical investigation that considers both finite and infinite frames at once in  \Cref{subsecLocalCoherence,subsecLocalizationFactor} however, the subsampling matrices are assumed to be simply $M_{(c, 1)} = M_{(c, 2)} = I_2$.
\end{remark}

The importance of shearlets for the present work lies in their remarkable nonlinear approximation properties for images with anisotropic singularities. As mentioned in the Introduction and discussed in Appendix~\ref{shearletAppendix}, if $f_N$ denotes the best $N$-term approximation of a cartoon-like image $f$ obtained by retaining the $N$ largest shearlet (or wavelet) coefficients (see \Cref{def:nonlinear-approx}), then wavelets satisfy
\[
\|f-f_N\|_{L^2}^2=\mathcal{O}(N^{-1}),
\]
whereas shearlets (and similarly curvelets) achieve the nearly optimal rate
\[
\|f-f_N\|_{L^2}^2=\mathcal{O}\!\left(N^{-2}(\log N)^3\right),
\]
see \cite{Mallat1998,CandesDonoho2004,Guo2007,Kutyniok2011,Kutyniok2012}. The geometric reason behind this improvement can be summarized as follows. At a fixed scale $j$, the number of wavelets whose support substantially intersects the discontinuity curve is of order $2^j$ \cite{Mallat1998}, since wavelets do not possess directional selectivity. In contrast, shearlets combine anisotropic scaling with orientation, so that only those elements whose orientation is adapted to the local tangent of the discontinuity contribute significantly to the approximation. Taking both spatial localization and orientation into account, the number of shearlets whose support substantially intersects the discontinuity curve is therefore reduced to order $2^{j/2}$ \cite{Kutyniok2012}. This reduction in the number of significant coefficients is the mechanism responsible for the better sparsity and approximation properties of shearlet systems.

\subsection{Local coherence for shearlets} \label{subsecLocalCoherence}
We now aim to bound the $k$-th local coherence with Fourier measurements, when shearlets are used as a sparsifying system:
$$\mu_k^{\rm loc}(\{\psi_i\}_{i}, \{\varphi_l\}_{l}) =  \sup_{i} |\langle \psi_i, \varphi_k\rangle|.$$
When the measurement frame consists of complex exponentials $\{\varphi_l\}_{l}$, the inner product $\langle \psi_i, \varphi_l\rangle$ is $\hat{\psi}_i$ evaluated at the corresponding measurement point in the frequency space depending on $l$, where 
$$\hat{f}(\xi) = \int_{\mathbb{R}^2} f(x) e^{-2\pi i \langle\xi, x\rangle} \, dx, \quad \xi \in \mathbb{R}^2,$$ 
denotes the Fourier transform of a complex-valued function $f\in L^1(\mathbb R^2)$.
Furthermore, we look at the case where the sparsifying frame consists of shearlets, as discussed in the previous section. We assume that the elements of the shearlet frame have certain decay properties in frequency (see \Cref{coneAdaptedGenerators}). Such shearlets have their essential supports in frequency concentrated on dyadic coronae, which allows us to control their decay in frequency. As the next result shows, we can bound the Fourier transform of these shearlets evaluated at an integer grid. The proof is a direct calculation based on the properties of the shearlet system and we provide it for completeness.

\begin{proposition}\label{localCoherenceProp}
Let the sparsifying frame $\{\psi_i\}_{i \in I}$ consist of coarse-scale, interior cone-adapted and boundary shearlets, as defined in \Cref{coneAdaptedGenerators} with parameters $\alpha>\gamma\geq\frac{3}{4}$ and $q>q'>0$, $q>r>0$. Set
$$
C
=
\max\left\{
\frac{2^{\frac{3}{2}}}{(q')^{\frac{3}{4}}}, \frac{2^{\frac{3}{4}}}{r^{\frac{3}{4}}}\right\}.$$
Then, for every $i \in I$ and $n \in \mathbb{Z}^2$, the following holds.
\begin{enumerate}
    \item If $\psi_i$ is a coarse-scale shearlet, then
    \[
    |\hat{\psi_i}(n)|
    \le
    \begin{cases}
        1 & \text{if $n=(0,0)$,}\\
        \min\{1,|r n_1|^{-\gamma}\}\,\min\{1,|r n_2|^{-\gamma}\}& \text{if $n\neq(0,0)$.}
    \end{cases}
    \]
    
    \item If $\psi_i$ is an interior or boundary shearlet, then
    \[
    |\hat{\psi_i}(n)|
    \le
    \begin{cases}
        0 & \text{if $n=(0,0)$,}\\
        C\|n\|_\infty^{-\frac{3}{4}}& \text{if $n\neq(0,0)$.}
    \end{cases}
    \]
    \end{enumerate}
\end{proposition}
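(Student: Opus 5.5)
We do not yet have the exact wording of \Cref{coneAdaptedGenerators}, so we work with the standard Grohs--Kutyniok-type frequency-decay assumptions it presumably encodes. For the coarse-scale generator $\Phi$ we assume $|\hat\Phi(\xi)|\le \min\{1,|r\xi_1|^{-\gamma}\}\min\{1,|r\xi_2|^{-\gamma}\}$. For the mother shearlet $\psi^{(1)}$ we assume
\[
|\hat\psi^{(1)}(\xi)|\le \min\{1,|q\xi_1|^{\alpha}\}\min\{1,|q'\xi_1|^{-\gamma}\}\min\{1,|r\xi_2|^{-\gamma}\},
\]
and we assume the analogous bound for $\psi^{(2)}$ with the coordinates swapped. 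We take the vanishing moments to be exact at the origin, so that $\hat\psi^{(\iota)}(0,\xi_2)=0$. The boundary shearlets are assumed to be glued pieces obeying the same bounds on each cone.

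The proof has three steps.

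\textbf{Step 1: transfer to the generators.} Since $\psi_{-1,k}=\Phi(\cdot-k)$, translation only contributes a unimodular phase, so $|\hat\psi_{-1,k}(n)|=|\hat\Phi(n)|$. This immediately gives item 1, including the bound $1$ at $n=0$.

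For $\psi^{(1)}_{j,\ell,k}=2^{3j/4}\psi^{(1)}(B_{-\ell}A_{(2^j,1)}\cdot-k)$, a change of variables gives
\[
|\hat\psi^{(1)}_{j,\ell,k}(\xi)|=2^{-3j/4}\bigl|\hat\psi^{(1)}\bigl(B_{-\ell}^{-T}A_{(2^j,1)}^{-1}\xi\bigr)\bigr|,
\]
since $|\det(B_{-\ell}A_{(2^j,1)})|=2^{3j/2}$. The argument of $\hat\psi^{(1)}$ is
\[
\eta=\bigl(2^{-j}\xi_1,\ 2^{-j/2}\xi_2+\ell 2^{-j}\xi_1\bigr),
\]
up to the sign convention for $B$. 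In particular $\eta_1=2^{-j}\xi_1$. At $n=0$ we get $\eta=0$, and the vanishing factor $|q\cdot 0|^{\alpha}=0$ gives item 2 at the origin.

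\textbf{Step 2: a scale-free estimate in terms of $|\xi_1|$.} Drop the $\xi_2$-factor, bounding it by $1$. This gives
\[
|\hat\psi^{(1)}_{j,\ell,k}(\xi)|\le 2^{-3j/4}\min\{1,|q 2^{-j}\xi_1|^{\alpha}\}\min\{1,|q'2^{-j}\xi_1|^{-\gamma}\}.
\]
Set $t=2^{-j}|\xi_1|$. Then $2^{-3j/4}=t^{3/4}|\xi_1|^{-3/4}$, so the bound equals $|\xi_1|^{-3/4}\,h(t)$ with
\[
h(t)=t^{3/4}\min\{1,(qt)^{\alpha}\}\min\{1,(q't)^{-\gamma}\}.
\]
Because $\alpha>3/4$ and $\gamma\ge 3/4$, the function $h$ is bounded, and $\sup_t h(t)\le (q')^{-3/4}$. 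Indeed, for $t\le 1/q'$ we have $h(t)\le t^{3/4}\le (q')^{-3/4}$, and for $t>1/q'$ we have $h(t)\le t^{3/4}(q't)^{-\gamma}\le (q')^{-3/4}$. This yields $|\hat\psi^{(1)}_{j,\ell,k}(n)|\le (q')^{-3/4}|n_1|^{-3/4}$, uniformly in $j$, $\ell$ and $k$.

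\textbf{Step 3: the main obstacle, passing from $|n_1|$ to $\|n\|_\infty$.} The bound in Step 2 only controls $|n_1|^{-3/4}$, and this is weak when $|n_2|\gg|n_1|$, including the case $n_1=0$. We treat the two regimes separately.

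In the regime $|n_1|\ge |n_2|/2$, we have $|n_1|^{-3/4}\le 2^{3/4}\|n\|_\infty^{-3/4}$. Combined with an extra factor $2^{3/4}$ lost when absorbing the boundary shearlets, which have two summands or half-cone pieces, this should produce the constant $2^{3/2}(q')^{-3/4}$.

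In the regime $|n_2|>2|n_1|$, we have $|\ell|\le\lceil 2^{j/2}\rceil$, so the shear term satisfies $|\ell 2^{-j}n_1|\lesssim 2^{-j/2}|n_1|<2^{-j/2}|n_2|/2$. Hence $|\eta_2|\gtrsim 2^{-j/2}|n_2|$. We now keep the $\xi_2$-decay factor and bound the first factor by $1$, which gives
\[
|\hat\psi^{(1)}_{j,\ell,k}(n)|\le 2^{-3j/4}\min\{1,(r|\eta_2|)^{-\gamma}\}.
\]
Writing $2^{-3j/4}=(2^{-j/2})^{3/2}$ and optimizing as in Step 2 gives a bound of the form $(r|n_2|/2)^{-3/4}$ times a harmless factor, using $\gamma\ge 3/4$ and $2^{-3j/4}\le 2^{-3j/8}$. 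This produces the $2^{3/4}r^{-3/4}$ term.

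The delicate points in this step are the rounding in $\lceil 2^{j/2}\rceil$ at small $j$ and the exact constants. If these constants do not come out cleanly, we would shrink the threshold between the two regimes, or use $\|n\|_\infty$ directly in the case split. The vertical cone is handled symmetrically by swapping coordinates, and taking the maximum over the two regimes gives $C$.
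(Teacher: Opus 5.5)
Your strategy is the paper's. You pass to the generators and use the $\xi_1$-decay to get the scale-free bound $(q'|n_1|)^{-3/4}$ when $|n_1|\ge \|n\|_\infty/2$. Otherwise you apply the reverse triangle inequality to the second coordinate $2^{-j/2}(n_2-2^{-j/2}\ell n_1)$ and use the $\xi_2$-decay. For interior shearlets this is complete. Your device $2^{-3j/4}\le 2^{-3j/8}$, followed by $s^{3/4}\min\{1,s^{-\gamma}\}\le 1$ with $s=r2^{-j/2}\|n\|_\infty/2$, gives exactly $2^{3/4}r^{-3/4}\|n\|_\infty^{-3/4}$. This is tidier than the paper's case analysis ($a<1$; $t\ge a$; $t<a$ with $\gamma\le 3/2$ versus $\gamma>3/2$).

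The boundary shearlets, however, are not actually treated, and your explanation of the constant $2^{3/2}(q')^{-3/4}$ is not the right one. In \Cref{coneAdaptedGenerators} a boundary shearlet is not a sum of two pieces; a sum would in any case cost a factor $2$, not $2^{3/4}$. It comes from its own generator $\psi^{(1,b)}$ or $\psi^{(2,b)}$, obeying the same decay bounds and sheared with $\ell=\pm\lceil 2^{j/2}\rceil$.

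The real problem sits in your second regime. The step $|\ell 2^{-j}n_1|<2^{-j/2}|n_2|/2$ needs $|\ell|2^{-j/2}<1$. This holds for interior shearlets, since $|\ell|\le\lceil 2^{j/2}\rceil-1<2^{j/2}$. It fails for $|\ell|=\lceil 2^{j/2}\rceil$, where $\lceil 2^{j/2}\rceil 2^{-j/2}$ equals $\sqrt2$ at $j=1$. With the split at $\|n\|_\infty/2$ you then only get $|\eta_2|>(1-\tfrac{\sqrt2}{2})2^{-j/2}|n_2|$, and nothing at all with the crude bound $2$. That yields a constant larger than $2^{3/4}r^{-3/4}$.

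The fix is the fallback you mentioned, made concrete: for boundary shearlets, split at $|n_1|\ge \|n\|_\infty/4$. In that case your Step 2 gives $(q'|n_1|)^{-3/4}\le 4^{3/4}(q')^{-3/4}\|n\|_\infty^{-3/4}=2^{3/2}(q')^{-3/4}\|n\|_\infty^{-3/4}$. Otherwise, $\lceil 2^{j/2}\rceil 2^{-j/2}\le 2$ yields $|n_2-\lceil 2^{j/2}\rceil 2^{-j/2}n_1|\ge |n_2|-2|n_1|>\|n\|_\infty/2$. Your second-regime estimate then again gives $2^{3/4}r^{-3/4}\|n\|_\infty^{-3/4}$. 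This change of threshold is exactly where the factor $2^{3/2}=4^{3/4}$ in $C$ comes from.
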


\begin{proof}
Let $\Phi$, $\psi^{(1)}$, $\psi^{(1,b)}$ and $\psi^{(2,b)}$ denote feasible shearlet generators in the sense of \Cref{coneAdaptedGenerators}.
    \begin{enumerate}
    \item
    Since $\psi_i$ is a coarse-scale shearlet, we have $\psi_i = \widetilde\psi_{-1,k}$ and by definition, $(\widetilde\psi_{-1,k})^{\wedge}(\xi)=\widehat\Phi(\xi)\,e^{-2\pi i\langle \xi,k\rangle}$. If $n=(0,0)$, then $|\widehat\Phi(0,0)|=1$ by assumption. If $n\neq (0,0)$, the feasible coarse-scale
    condition gives
    $$
    |\widehat\Phi(n_1,n_2)|
    \le
    \min\{1,|r n_1|^{-\gamma}\}\,\min\{1,|r n_2|^{-\gamma}\}.
    $$
    
    \item
    Let $n = (0, 0)$. Since $\psi_i$ is not a coarse-scale shearlet, we denote its scale by $j\ge 0$. For $|\ell|<\lceil 2^{j/2} \rceil$ and $k \in \mathbb{Z}^2$, if $\psi_i$ is an interior horizontal shearlet, then $\psi_i = \psi^{(1)}_{j,\ell,k}$. So we have
    $$
    (\psi^{(1)}_{j,\ell,k})^{\wedge}(0,0)
    =
    2^{-\frac{3}{4}j}\,
    \widehat{\psi}^{(1)}\bigl(A_{(1)}^{-j}B_{(1)}^{-\ell}(0,0)\bigr)\,
    e^{-2\pi i\langle A_{(1)}^{-j}B_{(1)}^{-\ell}(0,0),\,k\rangle}
    =
    2^{-\frac{3}{4}j}\,\widehat{\psi}^{(1)}(0,0)=0.
    $$
    Similarly for the cases where $\psi_i$ is either an interior vertical shearlet or a boundary shearlet, we have $(\psi^{(2)}_{j,\ell,k})^{\wedge}(0,0) = (\psi^{(1,b)}_{j,\lceil 2^{j/2} \rceil,k})^{\wedge}(0,0) =(\psi^{(2,b)}_{j,-\lceil 2^{j/2} \rceil,k})^{\wedge}(0,0) = 0$. Thus $\hat{\psi_i}(0, 0) = 0.$
    
    Now fix $n = (n_1, n_2) \in \mathbb{Z}^2\setminus\{(0, 0)\}$ and let $j \geq 0$ denote the scale. For all $k \in \mathbb{Z}^2$ and $|\ell| < \lceil 2^{j/2} \rceil$ we have,
    $$
    \big|(\psi^{(1)}_{j,\ell,k})^{\wedge}(n)\big|
    =
    2^{-\frac{3}{4}j}
    \left|
    \widehat\psi^{(1)}
    \big(nA_{(1)}^{-j}B_{(1)}^{-\ell}\big)
    \right|.
    $$

    The first component of $nA_{(1)}^{-j}B_{(1)}^{-\ell}$ is $2^{-j}n_1$. Hence the feasible generator bound gives
    $$
    \big|(\psi^{(1)}_{j,\ell,k})^{\wedge}(n)\big|
    \leq
    2^{-\frac{3}{4}j}
    \min\left\{
    1,
    \big(q'2^{-j} |n_1|\big)^{-\gamma}
    \right\}.
    $$
    If $|n_1| \geq \frac{\|n\|_\infty}{2}$, then with $t = 2^{j}$ and $a = q'|n_1|$,
    $$
    2^{-\frac{3}{4}j}
    \min\left\{
    1,
    \big(q'2^{-j}|n_1|\big)^{-\gamma}
    \right\}
    =
    t^{-\frac{3}{4}}\min\left\{1, \left(\frac{t}{a}\right)^\gamma\right\}
    \leq
    a^{-\frac{3}{4}},
    $$
    where the last inequality follows by considering separately the two cases $t \geq a$ for which we have $t^{-\frac{3}{4}} \leq a^{-\frac{3}{4}}$ and the case $t < a$ for which we have $t^{\gamma - \frac{3}{4}} a^{-\gamma} \leq a^{-\frac{3}{4}}$ by using $\gamma \geq \frac{3}{4}$. Therefore,
    $$
    \bigl|(\psi^{(1)}_{j,\ell,k})^{\wedge}(n)\bigr|
    \leq
    (q'|n_1|)^{-\frac{3}{4}}
    \leq
    \frac{2^{\frac{3}{4}}}{(q')^{\frac{3}{4}} \|n\|_\infty^{\frac{3}{4}}}.
    $$

    It remains to consider the case $|n_1| < \frac{\|n\|_\infty}{2}$. Then $\|n\|_\infty = |n_2|$. The second component of $nA_{(1)}^{-j}B_{(1)}^{-\ell}$ is,
    $$
    2^{-\frac{j}{2}}n_2 - \ell2^{-j} n_1
    =
    2^{-\frac{j}{2}}(n_2 - 2^{-\frac{j}{2}} \ell n_1).
    $$
    Since $|\ell| < \lceil 2^{\frac{j}{2}} \rceil$, we have $|\ell| \leq \lceil 2^{\frac{j}{2}}\rceil - 1 < 2^{\frac{j}{2}}$ and thus $|2^{-\frac{j}{2}}\ell| < 1$. Therefore by reverse triangle inequality,
    \begin{equation} \label{eq:secCoordIneq}
        |n_2 - 2^{-\frac{j}{2}}\ell n_1|
        \geq 
        |n_2|-|2^{-\frac{j}{2}}\ell|\,|n_1|
        >
        \|n\|_\infty - \frac{\|n\|_\infty}{2}
        =
        \frac{\|n\|_\infty}{2}.
    \end{equation}
    Using \eqref{eq:secCoordIneq} and the decay in the vertical component in \Cref{coneAdaptedGenerators} we get,
    $$
    \big|(\psi^{(1)}_{j,\ell,k})^{\wedge}(n)\big|
    \leq
    2^{-\frac{3}{4}j}
    \min\left\{
    1,
    \left(r 2^{-\frac{j}{2}} \frac{\|n\|_\infty}{2}\right)^{-\gamma}
    \right\}.
    $$
    With $t = 2^{\frac{j}{2}}$ and $a = \frac{r \|n\|_\infty}{2}$, the right-hand side becomes
    \begin{equation} \label{eq:secCoordUpBound}
        t^{-\frac{3}{2}}
        \min\left\{
        1,
        \left(\frac{t}{a}\right)^\gamma
        \right\},
    \end{equation}
    which we again claim can be upper bounded by $a^{-\frac{3}{4}}$. Indeed, if $a < 1$, \eqref{eq:secCoordUpBound} is at most $1$ and we have $1 \leq a^{-\frac{3}{4}}$. If $a \geq 1$ and $t \geq a$, then $t^{-\frac{3}{2}}\leq a^{-\frac{3}{2}}\leq a^{-\frac{3}{4}}$. If $a \geq 1$ and $t < a$ then,
    $$
    t^{-\frac{3}{2}}\left(\frac{t}{a}\right)^\gamma
    =
    t^{\gamma - \frac{3}{2}}a^{-\gamma}
    \leq
    a^{-\frac{3}{4}},
    $$
    where the last inequality follows by considering the two cases $\gamma \leq \frac{3}{2}$ and $\gamma > \frac{3}{2}$ separately. Indeed, for the first case, we have $t^{\gamma - \frac{3}{2}} \leq 1$ and $a^{-\gamma} \leq a^{-\frac{3}{4}}$ since $\gamma \geq \frac{3}{4}$ and for the second case we have $t^{\gamma - \frac{3}{2}} \leq a^{\gamma - \frac{3}{2}}$ and $a^{-\frac{3}{2}} \leq a^{-\frac{3}{4}}$. Hence
    \begin{equation} \label{eq:secCaseBound}
        \bigl|(\psi^{(1)}_{j,\ell,k})^{\wedge}(n)\bigr|
        \leq
        a^{-\frac{3}{4}}
        =
        \left(\frac{r \|n\|_\infty}{2}\right)^{-\frac{3}{4}}
        =
        \frac{2^{\frac{3}{4}}}{r^{\frac{3}{4}} \|n\|_\infty^{\frac{3}{4}}}.    
    \end{equation}

    The boundary shearlets can be treated similarly with the only change being $\ell = \pm \lceil 2^{j/2}\rceil$. For $\psi^{(1,b)}_{j,\lceil 2^{j/2}\rceil,k}$, the decay on the first component gives the first case. If $|n_1|\geq \frac{\|n\|_\infty}{4}$, this gives
    $$
    \bigl|(\psi^{(1,b)}_{j,\lceil 2^{j/2}\rceil,k})^{\wedge}(n)\bigr|
    \leq
    (q'|n_1|)^{-\frac{3}{4}}
    \leq
    \frac{2^{\frac{3}{2}}}{(q')^{\frac{3}{4}}\|n\|_\infty^{\frac{3}{4}}}.
    $$
    If $|n_1|<\frac{\|n\|_\infty}{4}$, then $\|n\|_\infty=|n_2|$ and the second component is of the form
    $$
    2^{-\frac{j}{2}}\left(n_2-\lceil 2^{j/2}\rceil 2^{-\frac{j}{2}}n_1\right).
    $$
    Since $2^{-j/2}\lceil 2^{j/2}\rceil\leq 2$, by reverse triangle inequality we have,
    $$
    \left|n_2-\lceil 2^{j/2}\rceil 2^{-\frac{j}{2}}n_1\right|
    \geq
    |n_2|-2|n_1|
    >
    \|n\|_\infty - 2 \cdot \frac{\|n\|_\infty}{4}
    =
    \frac{\|n\|_\infty}{2}.
    $$
    From this point following the argument that led to \eqref{eq:secCaseBound} we obtain the same bound. The proof for interior vertical shearlets and the vertical boundary shearlet $\psi^{(2,b)}_{j,-\lceil 2^{j/2}\rceil,k}$ is essentially the same with the roles of $n_1$ and $n_2$ interchanged. Therefore, for every interior or boundary shearlet and every $n\neq(0,0)$,
    $$
    |\hat{\psi_i}(n)|
    \leq 
    \max\left\{
    \frac{2^{\frac{3}{4}}}{(q')^{\frac{3}{4}}},
    \frac{2^{\frac{3}{2}}}{(q')^{\frac{3}{4}}},
    \frac{2^{\frac{3}{4}}}{r^{\frac{3}{4}}}
    \right\} 
    =
    \max\left\{
    \frac{2^{\frac{3}{2}}}{(q')^{\frac{3}{4}}},
    \frac{2^{\frac{3}{4}}}{r^{\frac{3}{4}}}
    \right\} 
    =
    C\|n\|_\infty^{-\frac{3}{4}}.
    $$
    This proves the second claim and completes the proof.
    \end{enumerate}
\end{proof}

In the setting of \Cref{localCoherenceProp}, we now pass to the unit-square setting $\cH = L^2([0,1]^2)$, namely, we consider square-integrable functions on $[0,1]^2$. We let $\{\psi_i\}_i$ be the shearlet frame as in the assumptions of \Cref{localCoherenceProp} (considering only the shearlet elements whose support is essentially contained in $[0,1]^2$). We let $\{\varphi_l\}_{l\in\N}$ be the orthonormal basis of $\cH$ consisting of Fourier exponentials, namely, $\varphi_{l}(x) = e^{2\pi i  x\cdot n_l}$, where $l\in\N\mapsto n_l\in\Z^2$ is a bijection. In this setting, \Cref{localCoherenceProp} gives\footnote{In the case of compactly supported shearlets (\Cref{thm:compact-shearlets}), this derivation is fully rigorous, because $\langle\psi_i,\varphi_l\rangle=\hat\psi_i(n_l)$. In the case of bandlimited shearlets (\Cref{thm:bandLimitedFrame}), here we discard the fact that $\operatorname{supp}\psi_i\not\subseteq [0,1]^2$. However, since $\operatorname{supp}\psi_i$ is very well localized, this yields exponentially small errors, which do not affect the overall bound.}
\begin{equation}
\label{eq:shear_loc_cohe_bound}
    \mu_k^{\rm loc}(\{\psi_i\}_i, \{\varphi_{l}\}_{l \in \N})
    \lesssim (1+\|n_k\|_\infty)^{-3/4}, \quad k\in\N.
\end{equation}
In view of \eqref{eq:muk-omegak} and \eqref{eq:nul-omegal}, this bound will guide the choice of the probability density for sampling the Fourier measurements used in the next section.

\begin{remark} \label{waveletCoherenceRemark}
    For a sparsifying frame $\{\psi_i\}_{i}$ of two-dimensional separable Daubechies wavelets, it has been shown that $\mu_k^{\rm loc}(\{\psi_i\}, \{\varphi_{l}\})$ has a decay in $k$ of order $\mathcal{O}((1+\|n_k\|_\infty)^{-1})$ \cite[Theorem 2.3]{Jones2016}.
\end{remark}

\begin{remark}
\label{rmk:optimality_loc_coherence}
We remark that the estimate \eqref{eq:shear_loc_cohe_bound} is essentially optimal. Indeed, from \Cref{coneAdaptedGenerators}, we see that horizontal cone-adapted shearlets at scale $j$ are essentially supported in the two strips $\{c2^{j}\leq |\xi_1|\leq C2^{j}\}$ for some constants $c,C>0$. Given a Fourier coefficient $n_k=(n_k^1,n_k^2)$ in such a strip, we can consider a suitable shearlet element $\psi_{i_0}$, to be chosen later. We then have that
\begin{align*}
    \mu_k^{\rm loc}(\{\psi_i\}_{i },\{\varphi_l\}_{l }) \geq |\langle \psi_{i_0},\varphi_k \rangle| = |\widehat{\psi_{i_0}}(n_k)|.
\end{align*}
Littlewood-Paley conditions on shearlet generators (see, for example, (1.6) and (1.8) in \cite{Guo2007}), required to enforce the frame property, imply that there exist $c'>0$ and $i_0=i_0(n_k)$, indexing a horizontal cone-adapted shearlet, such that
\begin{align*}
    |\widehat{\psi_{i_0}}(n_k)| \geq c'2^{-3/4 j},
\end{align*}
where the factor $2^{-3/4j}$ is due to the scaling of shearlets with respect to the parameter $j$ (see \Cref{coneAdaptedGenerators}). Finally, we have that
\begin{align*}
2^{-3/4 j} \gtrsim |n_k^1|^{-3/4} \geq (1+\|n_k\|_{\infty})^{-3/4}.
\end{align*}
This implies the claimed lower bound on the local coherence for frequencies $n_k$ in the horizontal cones. The same argument applies \textit{verbatim} to vertical cone-adapted shearlets.
\end{remark}

\subsection{Localization factor for cone-adapted shearlets} \label{subsecLocalizationFactor}
We derive here some abstract results bounding the localization factor related to two arbitrary frames in a separable Hilbert space $\mathcal{H}$.
\begin{lemma}\label{localizationLemma}
    Let $\{\psi_i\}_{i\in I}$ be a Parseval sparsifying frame such that $\|\psi_i\|_{\mathcal{H}} \leq C$ for some $C > 0$ and all $i \in I$. Then, the localization factor (see \Cref{def:localization_factor}) can be bounded by
    $$\eta_{s, D} \leq C\sqrt{s} + C \sup_{\substack{\Delta \subseteq I, \\ |\Delta| = s}} \frac{1}{\sqrt{A_\Delta}} \sum_{i \in I \setminus \Delta}\sqrt{|\langle \psi_{i}, \psi_{l_{i, \Delta}}\rangle|},$$
    where $A_\Delta$ is the optimal lower frame bound of the finite subframe $\{\psi_i\}_{i\in \Delta}$ and 
    $$l_{i, \Delta} = \argmax_{k \in \Delta} |\langle \psi_i, \psi_k\rangle|.$$
\end{lemma}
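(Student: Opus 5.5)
Fix $\Delta\subseteq I$ with $|\Delta|=s$ and $g\in\operatorname{Im}(D^*P_\Delta)$ with $\|g\|_{\mathcal H}=1$. Write $g=\sum_{k\in\Delta}c_k\psi_k$. The plan is to split
$$\|Dg\|_1=\sum_{i\in\Delta}|\langle g,\psi_i\rangle|+\sum_{i\in I\setminus\Delta}|\langle g,\psi_i\rangle|$$
and bound the two sums separately. The first sum is handled directly. By Cauchy--Schwarz over the $s$ indices in $\Delta$, it is at most $\sqrt{s}\,\|P_\Delta Dg\|_2$. Since $D$ is Parseval, $\|P_\Delta Dg\|_2\le\|Dg\|_2=\|g\|=1$. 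This gives $\sqrt s\le C\sqrt s$ whenever $C\ge1$, and $C\ge 1$ should follow from the normalization (if needed, one can instead use $|\langle g,\psi_i\rangle|\le\|\psi_i\|\le C$ together with Cauchy--Schwarz). Dividing by $\sqrt s$ at the end turns this into the term $C\sqrt s/\sqrt s$. Here I expect a mismatch with the stated $C\sqrt s$: the $\sqrt s$ in the statement is consistent with a cruder bound that is not divided by $\sqrt{s}$. I would therefore aim to prove the stronger inequality and note that it implies the stated one.

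For the off-support sum, fix $i\notin\Delta$ and write $|\langle g,\psi_i\rangle|=|\langle g,\psi_i\rangle|^{1/2}|\langle g,\psi_i\rangle|^{1/2}$. The first factor is at most $(\|g\|\|\psi_i\|)^{1/2}\le C^{1/2}$. For the second factor, expand
$$|\langle g,\psi_i\rangle|\le\sum_{k\in\Delta}|c_k||\langle\psi_k,\psi_i\rangle|\le\|c\|_1\,|\langle\psi_i,\psi_{l_{i,\Delta}}\rangle|,$$
which uses the definition of $l_{i,\Delta}$ as a maximizer over $\Delta$.

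The key step is to control the coefficients $c$ using the lower frame bound of the subframe. Among all representations of $g$ in $\operatorname{span}\{\psi_k\}_{k\in\Delta}$, choose the minimal-norm one, $c=T_\Delta(T_\Delta^*T_\Delta)^{-1}g$, where $T_\Delta$ is the analysis operator of $\{\psi_k\}_{k\in\Delta}$ on its span. By \Cref{frameBound}, its frame operator has smallest eigenvalue $A_\Delta$, so $\|c\|_2\le A_\Delta^{-1/2}\|g\|=A_\Delta^{-1/2}$. Replacing $g$'s given representation by this one is legitimate, since only $g$ enters $Dg$. Then $\|c\|_1\le\sqrt s\,A_\Delta^{-1/2}$. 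To avoid an extra $\sqrt s$, I would instead bound the inner product through the dual-like expansion: $|\langle g,\psi_i\rangle|\le \|c\|_2\,(\sum_{k}|\langle\psi_k,\psi_i\rangle|^2)^{1/2}$, and bound the sum by $\sqrt{s}$ times the maximum. Combining the two factors gives
$$|\langle g,\psi_i\rangle|\le C^{1/2}\,(\sqrt s\,A_\Delta^{-1/2})^{1/2}\,|\langle\psi_i,\psi_{l_{i,\Delta}}\rangle|^{1/2}.$$

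Summing over $i\notin\Delta$, dividing by $\sqrt s$, and taking the supremum over $\Delta$ should yield the second term. The main obstacle is getting the powers of $s$, $A_\Delta$ and $C$ to match the statement exactly: the square-root splitting naturally produces $A_\Delta^{-1/4}$ and $s^{-1/4}$ factors. I would first check whether, with $C\ge1$ and $A_\Delta\le\max_k\|\psi_k\|^2\le C^2$ (and $A_\Delta\le 1$ from the Parseval property of the whole frame), these factors can be absorbed into $C/\sqrt{A_\Delta}$. Using $A_\Delta^{-1/4}\le A_\Delta^{-1/2}$ for $A_\Delta\le1$ and $s^{-1/4}\le1$, the bound should reduce to the stated form.
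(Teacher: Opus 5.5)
Your core estimate is sound and takes a different route from the paper. However, the closing absorption step rests on two false claims, so as written it does not prove the lemma for every admissible $C$.

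First, $C\ge 1$ does not follow from the normalization; the implication goes the other way. For a Parseval frame, $\|\psi_i\|^2=\sum_k|\langle\psi_i,\psi_k\rangle|^2\ge\|\psi_i\|^4$, so $\|\psi_i\|\le 1$. The lemma is stated for any $C$ with $\|\psi_i\|\le C$, including $C<1$, and then $C^{1/2}\le C$ fails. You also cannot replace $C$ by $1$, because the right-hand side increases with $C$.

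Second, $A_\Delta\le\max_{k\in\Delta}\|\psi_k\|^2$ fails for linearly dependent subfamilies. For the Parseval frame $\{2^{-1/2},2^{-1/2}\}$ of $\mathbb{C}$ with $\Delta$ containing both indices, $A_\Delta=1>C^2=\tfrac12$. What is true, by testing the lower frame inequality on any unit $f\in\mathcal W_\Delta$, is $A_\Delta\le\sum_{k\in\Delta}|\langle f,\psi_k\rangle|^2\le sC^2$. This is exactly what your factors need:
\[
C^{1/2}s^{-1/4}A_\Delta^{-1/4}=\frac{C}{\sqrt{A_\Delta}}\Bigl(\frac{A_\Delta}{sC^2}\Bigr)^{1/4}\le\frac{C}{\sqrt{A_\Delta}}.
\]
Simpler still, drop the interpolation. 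Since $|\langle\psi_i,\psi_{l_{i,\Delta}}\rangle|\le C^2$, your linear bound gives $|\langle g,\psi_i\rangle|\le\sqrt s\,A_\Delta^{-1/2}|\langle\psi_i,\psi_{l_{i,\Delta}}\rangle|\le\sqrt s\,C A_\Delta^{-1/2}|\langle\psi_i,\psi_{l_{i,\Delta}}\rangle|^{1/2}$, which yields the second term after dividing by $\sqrt s$. For the on-support part there is no mismatch: your fallback $|\langle g,\psi_i\rangle|\le C$ gives $Cs/\sqrt s=C\sqrt s$, which is exactly the stated term.

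The paper argues differently. It writes $\langle g,\psi_i\rangle=\langle g,P_{\mathcal W_\Delta}\psi_i\rangle$ for all $i$, applies Cauchy--Schwarz, and bounds $\|P_{\mathcal W_\Delta}\psi_i\|^2=\langle P_{\mathcal W_\Delta}\psi_i,\psi_i\rangle$ through the canonical dual of the subframe. The square root in the statement comes from taking this norm, and the bound is uniform in $g$ from the start. You instead fix $g$, use its minimal-norm synthesis coefficients with $\|c\|_2\le A_\Delta^{-1/2}$, and get an estimate linear in the cross-Gram entries.

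Once repaired, your route actually gives the termwise sharper bound $\eta_{s,D}\le\min\{1,C\sqrt s\}+\sup_{|\Delta|=s}A_\Delta^{-1/2}\sum_{i\in I\setminus\Delta}|\langle\psi_i,\psi_{l_{i,\Delta}}\rangle|$. This buys three things:
\begin{enumerate}
\item Your on-support estimate through $\|Dg\|_2=1$ removes the $\sqrt s$ loss that the paper's subsequent remark attributes to its Cauchy--Schwarz step. In particular it gives $\eta_{s,D}=1$ for an orthonormal basis.
\item The sum without square roots would let intrinsic localization with $L>1$ suffice, instead of $L>2$.
\item Your interpolated version weakens the dependence from $A_\Delta^{-1/2}$ to $A_\Delta^{-1/4}$, although this does not remove the possible exponential smallness of $A_\Delta$.
\end{enumerate}
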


\begin{proof}

    Let $\Delta \subseteq I$ be such that $|\Delta| = s$ and denote $\mathcal{W}_\Delta = \operatorname{Im}(D^*P_\Delta)$. Take $g \in \mathcal{\mathcal{W}}_\Delta$ such that $\|g\|_{\mathcal{H}} = 1$. There exists some $x \in \ell^2(I)$ such that $g = D^*P_\Delta x$. Furthermore, since $g$ has unit norm, we estimate $\|Dg\|_1$ in terms of $\|g\|_{\mathcal{H}}$, which equals one. First, by Cauchy-Schwarz inequality we get:
    \begin{align}\label{csInequality}
        \|Dg\|_1 &= \sum_{i \in I} |\langle g, \psi_i\rangle| = \sum_{i \in I} |\langle P_{\mathcal{W}_\Delta} g, \psi_i\rangle| = \sum_{i \in I}| \langle g, P_{\mathcal{W}_\Delta}\psi_i\rangle| \nonumber \\
        & \leq \sum_{i \in I} \|g\|_{\cH} \cdot \|P_{\mathcal{W}_\Delta} \psi_i\|_\cH = \sum_{i \in I} \|P_{\mathcal{W}_\Delta} \psi_i\|_\cH \\
        &= \sum_{i \in \Delta} \|\psi_i\|_\cH + \sum_{i \in I \setminus \Delta} \|P_{\mathcal{W}_\Delta} \psi_i\|_\cH \leq Cs + \sum_{i \in I \setminus \Delta} \|P_{\mathcal{W}_\Delta} \psi_i\|_\cH.\nonumber
    \end{align}

    The interesting case occurs when $i \in I \setminus \Delta$. We can write the projection of the corresponding frame element to the subspace $\mathcal{W}_\Delta$ using the usual frame reconstruction formula for the subframe $\{\psi_i\}_{i\in \Delta}$. Denote the frame operator of the subframe $\{\psi_i\}_{i \in \Delta}$ as $S_{\mathcal{W}_\Delta}$ so that $\{S_{\mathcal{W}_\Delta}^{-1}\psi_i\}_{i \in \Delta}$ is the canonical dual frame of the subframe. For a fixed index $i \in I \setminus \Delta$ we can write the projection of its corresponding frame element to $\mathcal{W}_\Delta$ as:
    $$P_{\mathcal{W}_\Delta} \psi_{i} = \sum_{j \in \Delta} \langle P_{\mathcal{W}_\Delta}\psi_{i}, \psi_j\rangle S_{\mathcal{W}_\Delta}^{-1} \psi_j =\sum_{j \in \Delta} \langle \psi_{i}, \psi_j\rangle S_{\mathcal{W}_\Delta}^{-1} \psi_j.$$
    We now compute the projection's norm as:
    \begin{align*}
     \|P_{\mathcal{W}_\Delta} \psi_{i}\|_\cH^2 &= \langle P_{\mathcal{W}_\Delta} \psi_{i}, P_{\mathcal{W}_\Delta} \psi_{i}\rangle = \langle P_{\mathcal{W}_\Delta} \psi_{i},  \psi_{i}\rangle = \bigg\langle \sum_{j \in \Delta} \langle \psi_{i}, \psi_j\rangle S_{\mathcal{W}_\Delta}^{-1} \psi_j,  \psi_{i}\bigg\rangle \\
        &= \bigg|\sum_{j \in \Delta} \langle \psi_{i}, \psi_j\rangle \cdot \langle S_{\mathcal{W}_\Delta}^{-1} \psi_j,  \psi_{i}\rangle\bigg| \leq \sum_{j \in \Delta} |\langle \psi_{i}, \psi_j\rangle| \, |\langle S_{\mathcal{W}_\Delta}^{-1} \psi_j,  \psi_{i}\rangle| \\
        &\leq \sum_{j \in \Delta} |\langle \psi_{i}, \psi_j\rangle| \cdot \|S_{\mathcal{W}_\Delta}^{-1} \psi_j\|_\cH \cdot \|\psi_{i}\|_\cH \leq \sum_{j \in \Delta} |\langle \psi_{i}, \psi_j\rangle| \cdot \|S_{\mathcal{W}_\Delta}^{-1}\| \cdot \| \psi_j\|_\cH \cdot \|\psi_{i}\|_\cH \\
        &\leq C^2\|S_{\mathcal{W}_\Delta}^{-1}\| \sum_{j \in \Delta} |\langle \psi_{i}, \psi_j\rangle|.
    \end{align*}
 When we sum over the projections we get:
    $$\|Dg\|_1   \leq Cs + C\sqrt{\|S_{\mathcal{W}_\Delta}^{-1}\|}\sum_{i \in I \setminus \Delta}\sqrt{\sum_{j \in \Delta} |\langle \psi_{i}, \psi_j\rangle|}.$$

    By \Cref{frameBound}, the smallest eigenvalue of $S_{\mathcal{W}_\Delta}$ is $A_\Delta$. Furthermore, since both $S_{\mathcal{W}_\Delta}$ and $S_{\mathcal{W}_\Delta}^{-1}$ are self-adjoint operators,  their operator norms are the absolute values of their respective largest eigenvalues. Since the reciprocals of the eigenvalues of $S_{\mathcal{W}_\Delta}$ are the eigenvalues of $S_{\mathcal{W}_\Delta}^{-1}$, we have $\|S_{\mathcal{W}_\Delta}^{-1}\| = \frac{1}{A_\Delta}$. Substituting this and using the fact that for each of the $s$ indices $j$ such that $j \in \Delta$ we have by definition $|\langle \psi_{i}, \psi_j \rangle| \leq |\langle \psi_{i}, \psi_{l_{i, \Delta}}\rangle|$, we get
    $$\|Dg\|_1 \leq Cs + \frac{C}{\sqrt{A_\Delta}}\displaystyle\sum_{i \in I\setminus\Delta}\sqrt{\sum_{j \in \Delta} |\langle \psi_{i}, \psi_j\rangle|} \leq Cs + \frac{C}{\sqrt{A_\Delta}}\displaystyle\sum_{i \in I\setminus\Delta}\sqrt{s \cdot |\langle \psi_{i}, \psi_{l_{i, \Delta}}\rangle|}.$$
    Thus, recalling the expression of the localization factor given in \Cref{def:localization_factor}, we have
    $$\eta_{s,D} \leq  C\sqrt{s} + C \sup_{\substack{\Delta \subseteq I, \\ |\Delta| = s}} \frac{1}{\sqrt{A_\Delta}} \sum_{i \in I \setminus \Delta}\sqrt{|\langle \psi_{i}, \psi_{l_{i, \Delta}}\rangle|}.$$
    This concludes the proof.
\end{proof}

\begin{remark}
    Notice that the bound given in \Cref{localizationLemma} is suboptimal.  The suboptimality mainly comes from the application of the Cauchy-Schwarz inequality at \eqref{csInequality}. This can be seen by the fact that in the best scenario where the sparsifying frame $\{\psi_i\}_{i \in I}$ is an orthonormal basis, the localization factor should be $\eta_{s, D} = 1$, while after \eqref{csInequality} for an $s$-sparse $\Delta \subseteq I$ we already obtain a term of the form
    $$\frac{\displaystyle\sum_{i \in \Delta} \|\psi_i\|_2}{\sqrt{s}} = \frac{|\Delta|}{\sqrt{s}} = \sqrt{s}$$ 
    in the upper bound.
\end{remark}

To apply \Cref{localizationLemma}, we need to estimate for $\Delta \subseteq I$ such that $|\Delta| = s$, an upper bound for $\displaystyle\sum_{i\in I \setminus \Delta} \sqrt{|\langle \psi_i, \psi_{l_{i, \Delta}}\rangle|}$ and a lower bound for $A_\Delta$. The first can be done using a concept called \emph{intrinsic localization} \cite{Fornasier2005}, as we now discuss. Next, we will comment on $A_\Delta$.

\begin{definition}
        A frame $\{\psi_i\}_{i \in I}$ for the Hilbert space $\mathcal{H}$ is \emph{intrinsically localized} with parameters $c > 0$ and $L > 1$ if
        $$|\langle \psi_i, \psi_l \rangle| \leq \frac{c}{(1 + |i-l|)^L},\quad \forall i,\, l \in I.$$
    \end{definition}

    \begin{remark}
        Wavelets can be shown to satisfy the above definition of intrinsic localization \cite{Cordero2004} where the parameter $L$ depends on the regularity of the generating wavelets. For what are called \emph{parabolic molecules}, like shearlets or curvelets, an analogous property is known \cite[Theorem 2.9]{Grohs2013} (see Appendix~\ref{parabolicMoleculesAppendix}).
    \end{remark}

    \begin{lemma} \label{intrinsicLocalizationLemma}
        Let $\{\psi_i\}_{i\in I}$ be a frame for the Hilbert space $\mathcal{H}$ that satisfies the intrinsic localization property with parameters $c > 0$ and $L > 2$. Then 
        \begin{equation}\label{eq:intrinsic-loc}
            \sup_{\substack{ \Delta \subseteq I,\\|\Delta| = s}}\sum_{i \in I \setminus \Delta}\sqrt{ |\langle \psi_i, \psi_{l_{i, \Delta}}\rangle|} \leq \frac{4s\sqrt{c}}{L-2},
        \end{equation}
        where $l_{i, \Delta}$ is defined as in \Cref{localizationLemma}.
    \end{lemma}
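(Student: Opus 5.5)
Fix $\Delta \subseteq I$ with $|\Delta| = s$. For each $i \in I\setminus\Delta$, the definition of $l_{i,\Delta}$ together with intrinsic localization gives
$$
|\langle \psi_i, \psi_{l_{i,\Delta}}\rangle| = \max_{k\in\Delta}|\langle \psi_i,\psi_k\rangle| \le \max_{k\in\Delta}\frac{c}{(1+|i-k|)^L}.
$$
Taking square roots and bounding the maximum by the sum over $k\in\Delta$, we get
$$
\sqrt{|\langle \psi_i, \psi_{l_{i,\Delta}}\rangle|} \le \sqrt{c}\sum_{k\in\Delta}(1+|i-k|)^{-L/2}.
$$
We sum over $i \in I\setminus\Delta$ and interchange the two sums, which is legitimate since all terms are nonnegative. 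The problem then reduces, for each fixed $k\in\Delta$, to bounding $\sum_{i\in I,\, i\neq k}(1+|i-k|)^{-L/2}$. We may drop $i=k$ because $k\in\Delta$ while $i\notin\Delta$.

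For fixed $k$, we use the fact that $I\subseteq\mathbb N$, so each distance $d=|i-k|\ge 1$ is attained by at most two indices $i$. Hence
$$
\sum_{i\neq k}(1+|i-k|)^{-L/2} \le 2\sum_{d=1}^\infty (1+d)^{-L/2} = 2\sum_{m=2}^\infty m^{-L/2}.
$$
Since $L>2$, the exponent $L/2$ exceeds $1$ and an integral comparison applies:
$$
\sum_{m=2}^\infty m^{-L/2} \le \int_1^\infty x^{-L/2}\,dx = \frac{1}{L/2-1} = \frac{2}{L-2}.
$$
Each $k$ therefore contributes at most $4/(L-2)$. Summing over the $s$ indices $k\in\Delta$ and multiplying by $\sqrt c$ gives $4s\sqrt c/(L-2)$. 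This bound is uniform in $\Delta$, so taking the supremum over $|\Delta|=s$ yields \eqref{eq:intrinsic-loc}.

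The only delicate point is the replacement of the maximum over $\Delta$ by a sum. This step is what makes the final bound linear in $s$ and what allows the order of summation to be swapped. The crude integral comparison, together with the factor $2$ from the two sides of $k$, is exactly what produces the constant $4/(L-2)$. The requirement $L>2$ (rather than $L>1$) is needed precisely because square roots halve the decay exponent. Beyond these two observations the argument is routine.
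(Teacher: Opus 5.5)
Your proposal is correct and follows essentially the same argument as the paper. The paper passes through $d(i,\Delta)=\min_{l\in\Delta}|i-l|$ and then over-counts by summing over all $l\in\Delta$, which is exactly your step of replacing the maximum by a sum; the interchange of sums, the factor $2$ from the two sides of each $l$, and the tail bound $\sum_{n\ge 2}n^{-L/2}\le \frac{2}{L-2}$ are then identical.
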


    \begin{proof}
        Let $\Delta \subseteq I$ be such that $|\Delta| = s$. We want to bound the sum:
        $$\sum_{i \in I \setminus \Delta}\sqrt{ |\langle \psi_i, \psi_{l_{i, \Delta}}\rangle|}.$$
        Notice that for all $i \not \in \Delta$ since $l_{i, \Delta} \in \Delta$, we have $|i - l_{i, \Delta}| \geq d(i, \Delta) := \displaystyle \min_{l \in \Delta} |i - l|$. Now using the intrinsic localization of the frame we get:
        \begin{align*}
            \sum_{i \in I \setminus \Delta}\sqrt{ |\langle \psi_i, \psi_{l_{i, \Delta}}\rangle|} &\leq \sum_{i \in I \setminus \Delta}\sqrt{ \frac{c}{(1 + |i - l_{i, \Delta}|)^L}} = \sum_{i \in I \setminus \Delta} \frac{\sqrt{c}}{(1 + |i - l_{i, \Delta}|)^{L/2}} \\
            &\leq \sum_{i \in I \setminus \Delta} \frac{\sqrt{c}}{(1 + d(i,  \Delta))^{L/2}} = \sum_{i \in I \setminus \Delta} \sum_{\substack{k \geq 1, \\ k = d(i, \Delta)}} \frac{\sqrt{c}}{(1 + k)^{L/2}} \\
            &\leq \sum_{k = 1}^\infty \sum_{i \in I \setminus \Delta}\sum_{\substack{l \in \Delta, \\ |i - l| = k}} \frac{\sqrt{c}}{(1 + k)^{L/2}} = \sum_{k = 1}^\infty \sum_{l \in \Delta}\sum_{\substack{i \in I \setminus \Delta, \\ |i - l| = k}} \frac{\sqrt{c}}{(1 + k)^{L/2}} \\
            &\leq \sum_{k = 1}^\infty \sum_{l \in \Delta} \frac{2\sqrt{c}}{(1 + k)^{L/2}} = \sum_{k = 1}^\infty \frac{2s\sqrt{c}}{(1 + k)^{L/2}} \leq \frac{4s\sqrt{c}}{L-2},
        \end{align*}
        where in the last inequality we used the bound $\sum_{n=2}^\infty \frac{1}{n^a}\leq \frac{1}{a-1}$ for $a>1$.

        Taking the supremum over all subsets $\Delta \subseteq I$ with $|\Delta|=s$, we obtain the inequality.
    \end{proof}
    The bound obtained in \eqref{eq:intrinsic-loc} is not optimal, because of the factor $s$ instead of $\sqrt{s}$. For the case where the sparsifying frame consists of parabolic molecules (see Appendix~\ref{parabolicMoleculesAppendix}), we can obtain a result analogous to \Cref{intrinsicLocalizationLemma}.

    \begin{lemma}\label{lem:intrinsic-mol}
        Let $(I, \Phi_I)$ be a parametrization with $\Phi_I$ injective and $(\Lambda^0, \Phi^0)$ be the canonical parametrization as in \Cref{parametrizationDef} such that $\Phi_I(I) \subseteq \Phi^0(\Lambda^0)$. Also let $L > 4$ and $\{\psi_i\}_{i \in I}$ be a frame of parabolic molecules of order $(R, M, N_1, N_2)$ as in \Cref{parabolicMoleculeDef} such that
        $$R \geq 2L, \quad M > 4L - \frac{5}{4}, \quad N_1 \geq 2L + \frac{3}{4}, \quad N_2 \geq 2L.$$
         Then, for the index distance $\omega$ as defined in \Cref{indexDistDef} we have,
         $$C_L := \sup_{l \in I} \sum_{i \in I}\omega(\Phi_{I}(i), \Phi_I(l))^{-L/2} < \infty,$$
         and for some constant $C' > 0$,
         $$\sup_{\substack{ \Delta \subseteq I,\\|\Delta| = s}}\sum_{i \in I \setminus \Delta}\sqrt{ |\langle \psi_i, \psi_{l_{i, \Delta}}\rangle|} \leq C'C_L s.$$
    \end{lemma}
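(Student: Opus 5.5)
The plan mirrors the proof of \Cref{intrinsicLocalizationLemma}, with the integer distance $|i-l|$ replaced by the index distance $\omega$ of \Cref{indexDistDef}, and the summability of $(1+k)^{-L/2}$ replaced by finiteness of $C_L$.

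\textbf{Step 1: almost orthogonality.} I would invoke \cite[Theorem 2.9]{Grohs2013} (see Appendix~\ref{parabolicMoleculesAppendix}). For parabolic molecules of order $(R,M,N_1,N_2)$ satisfying the stated inequalities, it yields a constant $C_0>0$ such that
\[
|\langle \psi_i,\psi_l\rangle| \le C_0\,\omega(\Phi_I(i),\Phi_I(l))^{-L}, \quad \text{for all } i,l\in I.
\]
The thresholds $R\ge 2L$, $M>4L-\frac54$, $N_1\ge 2L+\frac34$, $N_2\ge 2L$ are exactly the hypotheses needed for decay exponent $L$. Taking square roots gives $\sqrt{|\langle\psi_i,\psi_l\rangle|}\le \sqrt{C_0}\,\omega(\Phi_I(i),\Phi_I(l))^{-L/2}$.

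\textbf{Step 2: finiteness of $C_L$.} The related result in \cite{Grohs2013} states that $\sup_{\lambda'\in\Lambda^0}\sum_{\lambda\in\Lambda^0}\omega(\Phi^0(\lambda),\Phi^0(\lambda'))^{-N}<\infty$ for $N>2$ on the canonical parametrization. Since $L>4$, we have $N=L/2>2$. Because $\Phi_I$ is injective and $\Phi_I(I)\subseteq\Phi^0(\Lambda^0)$, the sum over $i\in I$ is a sub-sum of the canonical sum, with each point counted at most once. Hence $C_L$ is bounded by the canonical supremum and is finite. I expect this to be the main obstacle. If the cited statement is not available in this form, I would prove it directly by splitting the sum by scale difference $|j-j'|$, orientation difference and translation. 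I would then bound the resulting counts, using that the number of orientations at scale $j$ is about $2^{j/2}$ and that translations form a lattice rescaled by $2^{-j}\times2^{-j/2}$. Each piece should be summable once the exponent exceeds $2$, and any prefactor $2^{|j-j'|}$ appearing in $\omega$ absorbs the growth in the counts.

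\textbf{Step 3: summation.} Fix $\Delta$ with $|\Delta|=s$ and $i\notin\Delta$. Since $l_{i,\Delta}$ maximizes $|\langle\psi_i,\psi_k\rangle|$ over $k\in\Delta$, we get
\[
\sqrt{|\langle\psi_i,\psi_{l_{i,\Delta}}\rangle|}\le \sum_{k\in\Delta}\sqrt{|\langle\psi_i,\psi_k\rangle|}\le \sqrt{C_0}\sum_{k\in\Delta}\omega(\Phi_I(i),\Phi_I(k))^{-L/2}.
\]
Bounding the maximum by the sum is crude, but it matches the factor $s$ in the claim. Summing over $i\in I\setminus\Delta$ and exchanging the order of summation gives at most $\sqrt{C_0}\sum_{k\in\Delta}\sum_{i\in I}\omega(\Phi_I(i),\Phi_I(k))^{-L/2}\le \sqrt{C_0}\,C_L\, s$. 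Taking the supremum over $\Delta$ proves the claim with $C'=\sqrt{C_0}$.
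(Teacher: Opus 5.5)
Your proposal is correct and matches the paper's proof. \Cref{moleculeIntrinsicLocTheorem} gives the $\omega^{-L}$ decay. Finiteness of $C_L$ follows from injectivity of $\Phi_I$ and the $\tfrac{L}{2}$-admissibility of the canonical parametrization (\Cref{LadmissibleLemma}), so your fallback direct count is not needed. Exchanging the sums then yields $C' C_L s$ with $C'=\sqrt{C_0}$.

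The only cosmetic difference is in the last step. The paper groups the indices $i\in I\setminus\Delta$ according to the value of $l_{i,\Delta}$, whereas you bound the maximum by the sum over $\Delta$. Both give the same bound.
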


    \begin{proof}
    Since the mapping $\Phi_I$ associated to the parametrization is injective and $\Phi_I(I) \subseteq \Phi^0(\Lambda^0)$, we can write
        $$C_L \leq \sup_{l \in I} \sum_{\gamma \in \Lambda^0}\omega(\Phi^0(\gamma), \Phi_I(l))^{-L/2} \leq \sup_{\lambda \in \Lambda^0} \sum_{\gamma \in \Lambda^0}\omega(\Phi^0(\gamma), \Phi^0(\lambda))^{-L/2},$$
    where the right-hand side is bounded thanks to the $\frac{L}{2}$-admissibility of the canonical parametrization due to \Cref{LadmissibleLemma}, since $\frac{L}{2} > 2$. Now let $\Delta \subseteq I$ be such that $|\Delta| = s$. By the assumption on the order of our parabolic molecules $\{\psi_i\}_{i \in I}$, we can apply \Cref{moleculeIntrinsicLocTheorem} and obtain
    \begin{align*}
        \sum_{i \in I \setminus \Delta}\sqrt{ |\langle \psi_i, \psi_{l_{i, \Delta}}\rangle|} &\leq \sum_{i \in I \setminus \Delta}\sqrt{C\omega(\Phi_I(i), \Phi_I(l_{i, \Delta}))^{-L}} = \sqrt{C} \sum_{i \in I \setminus \Delta}\omega(\Phi_I(i), \Phi_I(l_{i, \Delta}))^{-L/2},
    \end{align*}
   for some constant $C > 0$. Now the proof follows similarly to that of \Cref{intrinsicLocalizationLemma}. Since $l_{i, \Delta} \in \Delta$, we have
    \begin{align*}
        \sum_{i \in I \setminus \Delta}\omega(\Phi_I(i), \Phi_I(l_{i, \Delta}))^{-L/2} &= \sum_{l \in \Delta}\sum_{\substack{i \in I \setminus \Delta, \\ l_{i, \Delta} = l}} \omega(\Phi_I(i), \Phi_I(l))^{-L/2} \leq \sum_{l \in \Delta}\sum_{i \in I} \omega(\Phi_I(i), \Phi_I(l))^{-L/2} \\
        &\leq \sum_{l \in \Delta} \sup_{k \in \Delta}\sum_{i \in I} \omega(\Phi_I(i), \Phi_I(k))^{-L/2} = s C_L.
    \end{align*}
    Letting $C' = \sqrt{C}$ and taking supremum over the $s$-sparse $\Delta \subseteq I$ we obtain,
    $$\sup_{\substack{ \Delta \subseteq I,\\|\Delta| = s}}\sum_{i \in I \setminus \Delta}\sqrt{ |\langle \psi_i, \psi_{l_{i, \Delta}}\rangle|} \leq C'C_L s.$$
    This concludes the proof.
    \end{proof}
    Let us now continue with the discussion about the factor $\frac{1}{A_\Delta}$ in \Cref{localizationLemma}. 

    \begin{remark}\label{rem:A}
    \Cref{waveletLowerBound} tells us that, for one-dimensional bandlimited wavelets, $A_\Delta$ can be bounded from below by an expression of the form $C^{j_0}$ for some $1 \geq C > 0$ where $j_0$ is the finest scale present in the index set $\Delta \subseteq I$.  Analogous arguments may be possible for 2D wavelets and directional systems, but the available 1D estimates already indicate the difficulty. However, the main issue is the possible dependence of the constant $C$ on the largest number of translation parameters for a scale that is present in $\Delta$, because the constant $A$ in \Cref{waveletLowerBound} can potentially be exponentially small even in the number of translation parameters, as shown in \cite[Proposition 2.3]{Christensen2001Exponential}. Thus, an upper bound for $\frac{1}{A_\Delta}$ obtained by a generalization of \Cref{waveletLowerBound} to 2D wavelets/directional wavelets would potentially be exponentially large both in the finest scale $j_0$ present in $\Delta$ and in the maximum number of translation parameters corresponding to $j_0$. This would be considerably worse than a term in $|\Delta|$ that is constant, linear or at most polynomial.
\end{remark}

To conclude, we emphasize that the derivations of this section show why finding a useful upper bound for the localization factor $\eta_{s, D}$ for shearlets is complicated. Indeed, recalling that $\eta_{s, D}^2$ appears in the sample complexity rate \eqref{eq:cor:m}, one would ideally aim for a bound with a  constant independent of $s$. However, the bound obtained in \Cref{localizationLemma} gives 
\begin{itemize}
    \item the factor $\sqrt{s}$;
    \item the sum of the square roots of the scalar products between the elements of the frame, which is $\mathcal{O}(s)$ in view of \Cref{intrinsicLocalizationLemma,lem:intrinsic-mol};
    \item the factor $\frac{1}{\sqrt{A_\Delta}}$, for which the known upper bounds can be very large as explained in \Cref{rem:A}.
\end{itemize}

\subsection{Comparing the sample complexities with wavelets and shearlets}\label{sub:comparing}
In this section, we discuss sample complexity estimates for the stable recovery of a cartoon-like image from $m$ randomly sampled Fourier measurements. In particular, we will compare the cases where the sparsifying dictionary is either an orthonormal basis of wavelets or a  frame of shearlets.

We  consider the following finite-dimensional approximation $f_{j_0}$ of a cartoon-like image: given a cartoon-like image $f$, we consider the image $f_{j_0}$ defined as
\begin{align*}
    f_{j_0} = \sum_{0 \leq j\leq j_0} \sum_n \langle f,\psi_{j,n} \rangle\, \psi_{j,n},
\end{align*}
where $(\psi_{j,n})_{j,n}$ denotes for convenience either a wavelet or a shearlet dictionary, $j$ denotes the scale index in both cases, and $n$ denotes the remaining indices. As outlined in \Cref{corollary2.9}, the sample complexity required to recover a compressible signal $f_{j_0}$ scales roughly as follows:
\begin{equation}
\label{eq:sample_complexity_discussion}
    m \gtrsim \eta_{s, D}^2 \|\omega\|_2^2\, s \cdot\text{(log factors)},
\end{equation}
where $\eta_{s, D}$ is the localization factor, $\|\omega\|_2$ is related to the local coherence via the bound
\begin{align*}
    \mu_k^{\rm loc}(\{\psi_{j,n}\}_{j,n},\{\varphi_l\}) \leq \omega_{k},
\end{align*}
and $s$ is the sparsity level of $f_{j_0}$.

This result holds under the assumption that the sensing dictionary $\{\varphi_l\}_l$ is an orthonormal basis of the space of signals under consideration, which may not be the case in the case of Fourier measurements and finite-dimensional subspaces of wavelets and shearlets. However, as also mentioned in Appendix~\ref{metaResultAppendix}, an additional ingredient in this case is given by the so-called balancing property (see \Cref{balancingProperty}), a concept that is also connected to the stable sampling rate. In \cite[Theorem~4.1]{Adcock2014}, estimates on the balancing property for wavelets are used to show that, if the Fourier measurements bandwidth is chosen as $c 2^{j_0}$ for a suitable $c>1$, then the same sample complexity \eqref{eq:sample_complexity_discussion} holds true also in this case. Analogous estimates are proved in \cite[Theorem~4.1]{Ma2017} for shearlets. As discussed in the paper, the obtained stable sampling rate might be slightly worse for shearlets. In what follows, we consider the most optimistic scenario where the sampling rate for shearlets scales analogously to the one for wavelets, namely we consider in both cases a bandwidth of $c2^{j_0}$ for Fourier measurements.

Recall that the localization factor satisfies $\eta_{s,D}=1$ for an orthonormal basis. As also mentioned at the end of \Cref{sec:compressed_sensing_setting}, in general, for highly redundant frames, $\eta_{s,D}\geq 1$ can be arbitrarily large. Moreover, as discussed in \Cref{subsecLocalizationFactor}, it may be non-trivial to give an upper bound for this quantity in the case of cone-adapted shearlets. For the remainder of this section, we will again ignore these issues, assuming the most optimistic scenario for which $\eta_{s,D}\approx 1$ also for the shearlet frame under consideration.

Concerning the local coherence $\mu^{\rm{loc}}$, we have proved in \Cref{localCoherenceProp} (see also \eqref{eq:shear_loc_cohe_bound}) that the local coherence $\mu_k^{\rm loc,\,shear}$ for shearlets and Fourier measurements scales like
\begin{align*}
    \mu_k^{\rm loc,\,shear} \lesssim (1+\|n_k\|_{\infty})^{-3/4}= \omega_k^{\rm shear},
\end{align*}
and that this bound is essentially optimal, as outlined in \Cref{rmk:optimality_loc_coherence}. Analogously, in \Cref{waveletCoherenceRemark}, we have recalled that the local coherence $\mu_k^{\rm loc,\,wave}$ for wavelets and Fourier measurements scales like
\begin{align*}
    \mu_k^{\rm loc,\,wave} \lesssim (1+\|n_k\|_{\infty})^{-1} = \omega_k^{\rm wave}.
\end{align*}
This implies that
\begin{align*}
    \|\omega^{\rm wave}\|_{2}^2 &=
    \sum_{\|n_k\|_{\infty}\leq c2^{j_0}} (\omega_k^{\rm wave})^2 =
    \sum_{\|n_k\|_{\infty}\leq c2^{j_0}} (1 + \|n_k\|_\infty)^{-2} \\ 
    &\leq
    \sum_{0\leq j\leq j_0} \sum_{\substack{n \in \mathbb{Z}^2, \\ c 2^{j-1}\leq \|n\|_{\infty} \leq c2^{j}}} (1 + \|n\|_\infty)^{-2} \lesssim
    \sum_{0\leq j\leq j_0} \sum_{\substack{n \in \mathbb{Z}^2, \\ c 2^{j-1}\leq \|n\|_{\infty} \leq c2^{j}}} 2^{-2j} \\
     &\approx
    \sum_{0\leq j\leq j_0} 2^{2j} 2^{-2j} = j_0 + 1,
\end{align*}
where we have used that
\begin{align*}
    \#\{n\in\mathbb{Z}^2\colon\ c 2^{j-1}\leq \|n\|_{\infty} \leq c2^{j}\} \approx 2^{2j}.
\end{align*}
On the other hand, we have that
\begin{align*}
    \|\omega^{\rm shear}\|_{2}^2 &=
    \sum_{\|n_k\|_{\infty}\leq c2^{j_0}} (\omega_k^{\rm shear})^2 =\sum_{\|n_k\|_{\infty}\leq c2^{j_0}} (1 + \|n_k\|_\infty)^{-3/2} \\ 
    &\leq
    \sum_{0\leq j\leq j_0} \sum_{\substack{n \in \mathbb{Z}^2, \\ c 2^{j-1}\leq \|n\|_{\infty} \leq c2^{j}}} (1 + \|n\|_\infty)^{-3/2} \lesssim
    \sum_{0\leq j\leq j_0} \sum_{\substack{n \in \mathbb{Z}^2, \\ c 2^{j-1}\leq \|n\|_{\infty} \leq c2^{j}}} 2^{-3j/2} \\
    &\approx
    \sum_{0\leq j\leq j_0} 2^{2j} 2^{-3/2j} \approx 2^{j_0/2}.
\end{align*}

Concerning the sparsity level of cartoon-like images, as remarked at the end of \Cref{sec:introduction_shearlets}, at each scale $j$, the number of non-negligible wavelet coefficients is of order $2^j$, while the number of non-negligible shearlet coefficients is of order $2^{j/2}$. This implies that the sparsity level of $f_{j_0}$ for wavelets and shearlets is roughly
\begin{align*}
    s^{\rm wave} \approx 2^{j_0},\quad
    s^{\rm shear} \approx 2^{j_0/2}.
\end{align*}
This allows us to estimate the sample complexity from \eqref{eq:sample_complexity_discussion} for the recovery of $f_{j_0}$ using wavelets and shearlets. We obtain the following scaling in $j_0$:
\begin{align*}
    m^{\rm wave} &\gtrsim \|\omega^{\rm wave}\|_{2}^2\,s^{\rm wave}
    \cdot\text{(log factors)} \approx j_0 2^{j_0}\cdot\text{(log factors)},\\
    m^{\rm shear} &\gtrsim \|\omega^{\rm shear}\|_{2}^2\,s^{\rm shear}
    \cdot\text{(log factors)} \approx 2^{j_0}\cdot\text{(log factors)},
\end{align*}
Thus, even in the optimistic scenario where the balancing property and localization factor do not affect the bound, the shearlet estimate improves on the wavelet estimate only by a factor of order $j_0$, i.e. logarithmically in the resolution. This is in agreement with our numerical results, shown in \Cref{sectionNumerics}.

\section{Numerical results} \label{sectionNumerics}

\subsection{Introduction to our numerical setup} \label{numericsIntroduction}

The following numerical experiments are not intended as a direct implementation of \Cref{corollary2.9}; rather, they test whether the sparsity advantage of shearlets is reflected in finite-dimensional Fourier recovery experiments.

\subsubsection{Digital frames} \label{numericsDigitalFrames}

We first specify the digital sparsifying and measurement systems used in the simulations. For the sparsifying frame $D$, we use the shearlet transform of the \textsc{ShearLab} toolbox and the two-dimensional stationary wavelet transform, as detailed below. All computations were carried out in \textsc{MATLAB}, version 2024b \cite{Matlab2024b}, on an AMD Ryzen Threadripper PRO 5955WX.

\begin{enumerate}
    \item \textbf{ShearLab shearlet transform:} For the shearlet case, $D$ is the two-dimensional discrete shearlet transform implemented in \textsc{ShearLab} \cite{Kutyniok2016, ShearlabSite}. The underlying shearlet system $\{\psi_\lambda\}_\lambda$ is generated by parabolic scalings and shearings of a small number of prototype functions and forms a frame for $\mathbb C^{K\times K}$. In the discrete implementation, given a digital image $g$, one first computes its discrete Fourier transform $\widehat g$, multiplies $\widehat g$ by the shearlet filters $\widehat\psi_\lambda$ associated with each cone, scale and shearing parameter triple and then applies an inverse Fourier transform to obtain the coefficients $\{\langle g,\psi_\lambda\rangle\}_\lambda$. The frame operator
    $$
    S g = \sum_{\lambda} \langle g,\psi_\lambda\rangle \psi_\lambda
    $$
    is diagonalized by the discrete Fourier transform. In frequency,
    $$
    \widehat{Sg}(\omega) = \sigma(\omega)\,\widehat g(\omega), 
    \quad 
    \sigma(\omega)= \sum_{\lambda} |\widehat\psi_\lambda(\omega)|^2.
    $$
    So $S$ is a Fourier multiplier with symbol $\sigma$ \cite[Section 3.4]{Kutyniok2016}. Hence the canonical dual frame $\{\widetilde\psi_\lambda\}_\lambda$ is obtained simply by
    $$
    \widehat{\widetilde\psi}_\lambda(\omega)
    = \frac{\widehat\psi_\lambda(\omega)}{\sigma(\omega)},
    $$
    that is, by pointwise dividing each shearlet filter by the frame weights $\sigma(\omega)$ in the frequency domain. For each feasible scale and shearing parameter pair, \textsc{ShearLab} returns an array of coefficients that has the same dimensions as the initial digital image, thanks to an appropriate choice of $c$, see \Cref{rem:c1c2}. Thus, \textsc{ShearLab} and other known state-of-the-art digital shearlet transform implementations give highly redundant frames. To compare with the \textsc{ShearLab} case, this motivated us to choose a redundant, translation invariant digital wavelet implementation which we introduce next. 
    \item \textbf{Stationary wavelet transform:}
    For the wavelet case, $D$ is chosen as the two-dimensional stationary wavelet transform available in the \textsc{MATLAB Wavelet Toolbox} \cite{WaveletToolbox}. In contrast to the critically sampled discrete wavelet transform, the stationary wavelet transform does not perform any downsampling and is therefore translation invariant and highly redundant. Given an image $g \in \mathbb C^{K \times K}$, the \textsc{MATLAB} function \texttt{swt2} returns, at each prescribed scale, one approximation subband and three detail subbands (horizontal, vertical, diagonal), all of the same spatial size as the original image. The function $D$ maps $g$ to the collection of all these subbands stacked into a single coefficient vector. Finally, $D^{-1}$ is realized by the corresponding inverse stationary wavelet transform (given by \texttt{iswt2}), which recombines all subbands to yield a reconstruction of $g$. 
\end{enumerate}

Furthermore, to make the total numbers of coefficients comparable, we select a digital shearlet frame with scales from $0$ to $2$, which results in $17$ cone-scale-shearing parameter triples, while for stationary wavelets we select $4$ levels, which results in $4 \cdot 4 = 16$ level-subband pairs. Thus, for each $256 \times 256$ digital image, we obtain $256^2 \cdot 17 = 1,114,112$ shearlet coefficients and $256^2 \cdot 16 = 1,048,576$ wavelet coefficients. We emphasize that this numerical wavelet system is redundant, unlike the orthonormal wavelet basis used as the theoretical benchmark in \Cref{sub:comparing}. We use the stationary wavelet transform here to obtain a translation-invariant system like  digital shearlets.

Now we specify the analysis operator $V$ of our measurement frame. In both cases of the wavelet/shearlet sparsifying frames, we consider two-dimensional Fourier measurements, and thus we numerically model the application of $V$ to a two-dimensional signal with the two-dimensional Fourier transform \texttt{fft2}. 

\subsubsection{Sampling probability distribution}\label{numericsDistribution}

In our numerics, we choose our two-dimensional measurement parameters from a finite integer grid $[-M+1, M]^2 \cap \mathbb{Z}^2$ to evaluate the Fourier transform. The assumption of an even number of measurements in each dimension is theoretically arbitrary, but it will be relevant in the numerical experiments in \Cref{numericsPhaseDiagrams} because it is required by the stationary wavelet transform implementation of \textsc{MATLAB}'s Wavelet Toolbox \cite{WaveletToolbox}. For sampled measurements $\Omega \subseteq [-M+1, M]^2 \cap \mathbb{Z}^2$, we model the measurement sampling operator $P_\Omega$ as a binary mask on the grid $[-M+1, M]^2 \cap \mathbb{Z}^2$, constructed by sampling measurements with replacement according to a suitable probability distribution. For any complex valued digital image $g \in \mathbb{C}^{K \times K}$, \texttt{fft2(g)} again has dimension $K\times K$ and we need the binary mask of dimension $2M \times 2M$ modeling $P_\Omega$ to have compatible dimensions with $Vg$. Thus, for our specific choice of measurement analysis operator, we require the dimensional relation $K = 2M$.  

\Cref{corollary2.9}, see in particular \eqref{eq:muk-omegak} and \eqref{eq:nul-omegal}, suggests choosing a sampling distribution compatible with the local coherence between our chosen sparsifying and measurement frames.  In the context of shearlets and two-dimensional wavelets, using \Cref{waveletCoherenceRemark} and  \Cref{localCoherenceProp}, the bounds suggest that the relevant decay rates are $\|n\|_\infty^{-1}$ for Daubechies wavelets and $\|n\|_\infty^{-3/4}$ for shearlets as $n\to \infty$. Thus, the rate of decay for shearlets satisfies the local coherence upper bound required by \Cref{corollary2.9} for both two-dimensional Daubechies wavelets and cone-adapted shearlets in the context of two-dimensional Fourier measurements. This motivates choosing a probability density function $p\colon[-M+1, M]^2 \cap \mathbb{Z}^2 \to [0,1]$  given by
$$
p(n_1, n_2)=
\begin{cases}
\dfrac{1}{C}, & (n_1, n_2)=(0,0),\\
\dfrac{1}{C\|(n_1, n_2)\|_\infty^{3/2}}, & (n_1, n_2)\neq(0,0),
\end{cases}
$$
where the normalizing constant $C$ is chosen so that $p$ has total mass one.

After drawing $k$ points independently and identically from the grid $[-M+1, M]^2 \cap \mathbb{Z}^2$ according to $p$, the binary mask $P_\Omega$ is the projection defined by keeping only the entries corresponding to the sampled measurements. For the numerical experiments of \Cref{numericsPhaseDiagrams}, $\mathcal{H}$ is the space of digital images $\mathbb{C}^{K \times K}$. In particular, for $g \in \mathbb{C}^{K \times K}$, $Vg$ is implemented as the full array of measurements and $P_\Omega Vg$ is implemented by elementwise multiplication of the measurement array $Vg$ with this binary mask. Since the required coherence bounds for both sparsifying frames we chose are satisfied with this probability distribution, we use $p$ to sample Fourier measurements in both the wavelet and shearlet cases. We have also conducted experiments with the probability density corresponding to the wavelet coherence bounds; the results are very similar, and we omit them.

\subsubsection{Construction of ground truth data} \label{numericsFuncConstruction}

For the experiments, we construct the ground truth signals $g^\dagger$ as real-valued, compactly supported, piecewise regular functions on the unit square $[0,1]^2$, and then uniformly sample them on a fixed $K\times K$ grid to obtain the digital images $g^\dagger \in \mathbb{R}^{K\times K}$. Let us now describe the construction process of these images. Let $r=\frac{1}{12}$ and consider the $25$ uniformly placed points in $[0, 1]^2$:
$$
c_{p,q}
=
\left(\frac{q-\frac{1}{2}}{5},\,\frac{p-\frac{1}{2}}{5}\right),
\quad p,q\in\{1,\dots,5\}.
$$
These points will be the centers of disks of radius $r$. From these $25$ centers, we choose $1\leq n \leq 25$ of them uniformly at random without replacement and denote the selected centers by
$$
c_j = (a_j,b_j), \quad j=1,\dots,n.
$$
To each selected center we associate the disk,
$$
B_j = \left\{(x,y)\in[0,1]^2 : (x-a_j)^2+(y-b_j)^2 \leq r^2\right\}, \quad j = 1,\dots, n.
$$
Since the spacing between neighboring centers is $\frac{1}{5}$ and $2r=\frac{1}{6}$, no two disks intersect. This later allows us to control the length of the boundary of the support of $g^\dagger$.

For each $j=1,\dots,n$, we draw independent random coefficients,
$$
\alpha_j \sim \operatorname{Unif}[0.95, 1.05],
\quad
\beta_j,\gamma_j \sim \mathcal{N}(0, 0.05^2),
$$
and with $\chi$ denoting the characteristic function, define
\begin{equation}
g^\dagger(x,y)
=
\sum_{j = 1}^n
(\alpha_j + \beta_j\sin(6\pi x) + \gamma_j\cos(6\pi y))\,
\chi_{B_j}(x,y),
\quad (x, y) \in [0,1]^2. \label{groundTruthImage}
\end{equation}
By construction, $g^\dagger$ is compactly supported in the union of the selected disks and is piecewise regular, since it  is given by a smooth function on each disk $B_j$, while it vanishes outside of these disks. Finally, the digital image $g^\dagger_K \in\mathbb R^{K\times K}$ is obtained by sampling this function on the uniform grid 
\begin{equation}
    G_K = \left\{ \left(\frac{x}{K},\frac{y}{K}\right) : x, y = 0, \dots, K-1 \right\}
\subseteq [0,1]^2, \label{uniformEvaluationGrid}
\end{equation}
so we have $g^\dagger_K = g^\dagger|_{G_K}$. Two randomly generated functions with $n = 5$ and $n = 14$ using the formula \eqref{groundTruthImage}, evaluated on the grid $G_{256}$, are shown in \Cref{fig:piecewiseRegularExamples} (panels \subref{fig:n5PiecewiseRegular} and \subref{fig:n14PiecewiseRegular}, respectively).

\begin{figure}[t]
\centering

\begin{subfigure}[t]{0.47\textwidth}
    \centering
    \includegraphics[width=\linewidth]{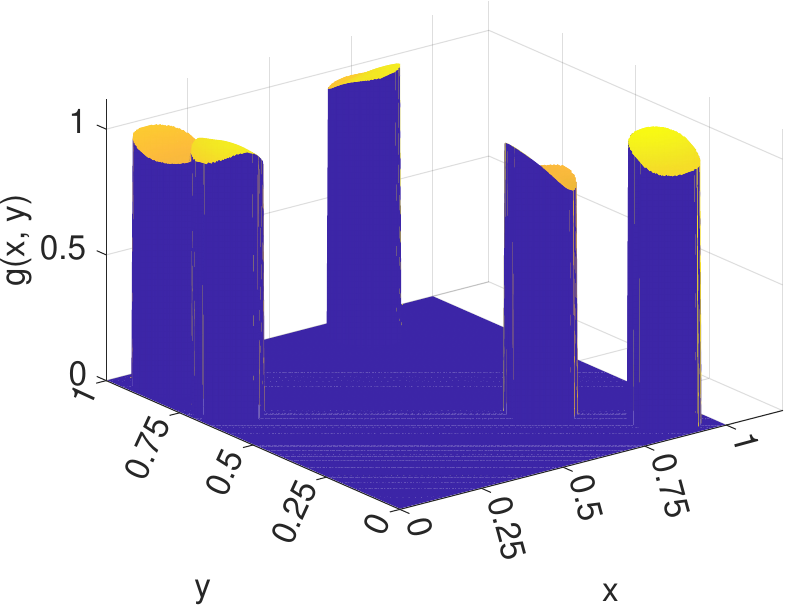}
    \caption{$n=5$ selected disks}
    \label{fig:n5PiecewiseRegular}
\end{subfigure}
\hfill
\begin{subfigure}[t]{0.47\textwidth}
    \centering
    \includegraphics[width=\linewidth]{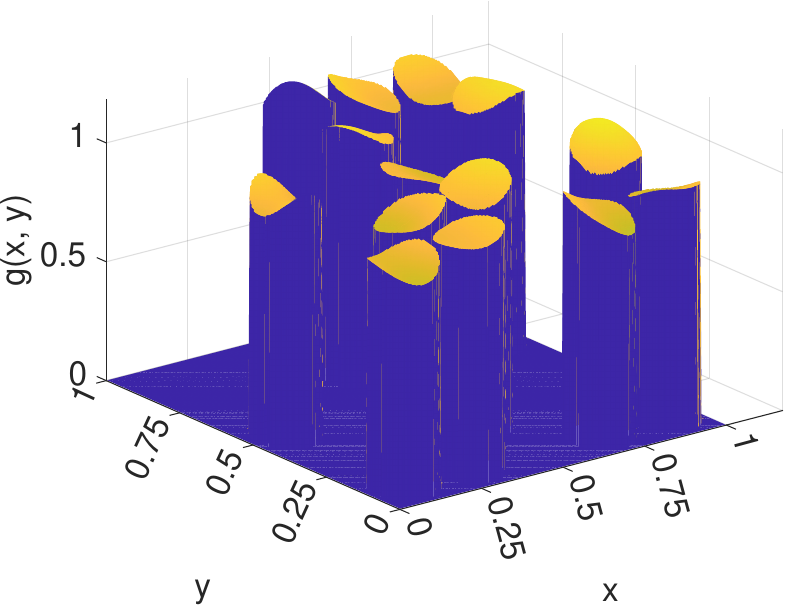}
    \caption{$n=14$ selected disks}
    \label{fig:n14PiecewiseRegular}
\end{subfigure}

\captionsetup{width=0.95\textwidth}
\caption[Examples of piecewise-regular ground-truth images]{Examples of synthetic ground-truth images $g^\dagger_{256}$ generated from \eqref{groundTruthImage} and sampled on the grid $G_{256}$. The support is the union of $n$ disjoint disks chosen from the fixed $5 \times 5$ array of possible centers.}
\label{fig:piecewiseRegularExamples}
\end{figure}

The test images are unions of characteristic functions of finitely many non-intersecting disks. They should be understood as multi-component cartoon-like images; the usual approximation estimates extend to this setting with constants depending on the number and the radius of the disks.

\subsection{The minimization problem} \label{numericsAnalysis}
For noisy ground truth data $y_\Omega = P_\Omega Vg^\dagger_K + \gamma_\Omega$ with $\|\gamma_\Omega\|_2 \leq \varepsilon_\Omega$, taking $\mathcal{H} = \mathbb{C}^{K \times K}$, we can write the analysis formulation \eqref{analysisFormulation} in the finite-dimensional setting as
\begin{equation}
    \min_{ g \in \mathbb{C}^{K \times K} } \|Dg\|_1
    \quad \text{subject to} \quad
    \|A g - y_\Omega\|_2 \leq \varepsilon_\Omega, \label{analysisFormulationRewritten}
\end{equation}
where
$$
A = P_\Omega V: \mathbb{C}^{K \times K} \to \mathbb{C}^{|\Omega|} .
$$

In our numerical implementations, the noise $\gamma_\Omega$ is generated with independently sampled Gaussian entries in $\mathcal{N}(0, \sigma^2)$ with $\sigma > 0$. Thus
$$
\mathbb{E}[\|\gamma_\Omega\|_2^2]
=
\sum_{i=1}^{|\Omega|} \sigma^2
=
|\Omega|\sigma^2.
$$
Accordingly, we take $\varepsilon_\Omega = \sigma\sqrt{|\Omega|}$ as a meaningful heuristic choice for the noise level in \eqref{analysisFormulationRewritten}. It is convenient to rewrite the constraint through an indicator function. Define the feasible set
$$
\mathcal{C}
=
\left\{
g \in \mathbb{C}^{K \times K} : \|Ag-y_\Omega\|_2 \le \varepsilon_\Omega
\right\},
$$
and let $\iota_{\mathcal{C}} \colon \mathbb{C}^{K \times K} \to [0,+\infty]$ be its indicator function,
$$
\iota_{\mathcal{C}}(g)
=
\begin{cases}
0, & g \in \mathcal{C},\\
+\infty, & g \notin \mathcal{C}.
\end{cases}
$$
Then the constrained problem \eqref{analysisFormulationRewritten} can be equivalently written as
\begin{equation}
    \min_{g \in \mathbb{C}^{K \times K}} \, \iota_{\mathcal{C}}(g) + \|Dg\|_1. \label{analysisFormUnlocbox}
\end{equation} 
This is precisely the form required by the forward-backward primal-dual solver in the \textsc{UNLocBoX} library \cite{Perraudin2014}, which solves minimization problems of the following type,
\begin{equation}
    \min_{g \in \mathbb{C}^{K \times K}} \, f_1(g) + f_2(Lg) + f_3(g), \label{primalDualSolver}
\end{equation}
where $L$ is a linear operator, $f_2$ is a convex function for which a proximal operator is available, and one of the terms $f_1$ or $f_3$ is a smooth convex function. In our setting, we take
$$
f_1(g) = \iota_{\mathcal{C}}(g),
\quad
f_2(z) = \|z\|_1,
\quad
L = D,
\quad
f_3(g)=0,
$$
which reduces \eqref{primalDualSolver} to \eqref{analysisFormUnlocbox}. 

The stopping rule for the minimization problem \eqref{primalDualSolver} is the one built into the forward-backward primal-dual solver. In our setting,  a \emph{primal variable} is the unknown signal $g$ to be reconstructed, whereas a \emph{dual variable} is an auxiliary variable introduced by the primal dual reformulation of the problem (see \cite{Komodakis2015}) to handle the term $f_2(Lg)$. More precisely, if $y_i^{(t)}$ denotes the $i$-th dual variable at iteration $t$, then the algorithm terminates for a tolerance value $\mathrm{tol}$ when the following condition is satisfied:
$$
\max_i \frac{\|y_i^{(t)}-y_i^{(t-1)}\|_2}{\|y_i^{(t)}\|_2} < \mathrm{tol}.
$$
In addition, the maximum number of iterations can be imposed. Thus, the algorithm is stopped as soon as either the above relative-change criterion is satisfied or the iteration count reaches this maximum. For our experiments, we use a relative error tolerance value of $10^{-3}$ and a maximum number of iterations of $10,000$.

\begin{remark} \label{numericsOperatorRemark1}
When applied to \eqref{primalDualSolver}, the \textsc{UNLocBoX} solver only requires routines for $A, A^*, D$ and $D^*$. Furthermore, evaluating $A$ and $A^*$ only requires the ability to evaluate $V$, $V^*$, the sampling operator $P_\Omega$ and the embedding $P_\Omega^*$ as seen in \Cref{alg:analysisForwardMap,alg:analysisAdjointMap}. Hence other measurement operators (for example the Radon transform or higher dimensional Fourier transforms) and even other sparsifying systems (for example other directional wavelets like curvelets) can be used by replacing these building blocks. 
\end{remark}

\begin{algorithm}[htbp]
  \caption{Implementation of the forward operator $A$}
  \label{alg:analysisForwardMap}
  \begin{algorithmic}[1]
    \State \textbf{Given:} Sampling set $\Omega$ and operator $V$.
    \Function{$w = A$}{$g$}
      \State \textbf{Input:} $g \in \mathbb{C}^{K \times K}$ \Comment{signal}
      \State $u \gets V g$ \Comment{apply measurement operator}
      \State $w \gets P_\Omega u$ \Comment{restrict to sampled indices}
      \State \textbf{Output:} $w \in \mathbb{C}^{|\Omega|}$
    \EndFunction
  \end{algorithmic}
\end{algorithm}

\begin{algorithm}[htbp]
  \caption{Implementation of the adjoint operator $A^*$}
  \label{alg:analysisAdjointMap}
  \begin{algorithmic}[1]
    \State \textbf{Given:} Sampling set $\Omega$ and operator $V^*$.
    
    \Function{$g = A^*$}{$w$}
      \State \textbf{Input:} $w \in \mathbb{C}^{|\Omega|}$ \Comment{data in measurement space}
      \State $u \gets P_\Omega^* w$ \Comment{zero-filling in $\Omega^c$}
      \State $g \gets V^* u$ \Comment{map the measurements back to signal space}
      \State \textbf{Output:} $g \in \mathbb{C}^{K \times K}$
    \EndFunction
  \end{algorithmic}
\end{algorithm}

As noted in \Cref{numericsOperatorRemark1}, the solver requires routines for the adjoints $D^*$, whereas the toolbox routines provide inverse transforms corresponding to $D^{-1}$. The code that evaluates $D^*$ for both cases is based on the fact that letting $N$ denote the number of sparsifying frame coefficients, $D^*$ is the synthesis operator of the sparsifying frame $\{\psi_i\}_{i = 1}^N$, so for coefficients $c = \{c_i\}_{i = 1}^N \in \C^N$,
$$
D^*c = \sum_{i = 1}^N c_i \psi_i.
$$

In our numerical experiments, the operator $V$ is implemented by (normalized) \texttt{fft2} and $P_\Omega$ is a binary sampling mask as described in \Cref{numericsDistribution}. The operator $D$ is either a two-dimensional wavelet transform or a two-dimensional shearlet transform. Then the forward map $A = P_\Omega V$ is realized by simply applying first the \texttt{fft2} and then masking the resulting Fourier measurements. The adjoint map $A^* = V^* P_\Omega^*$ instead is realized by first zero-filling the input on $\Omega^c$ in frequency space and then applying the inverse of \texttt{fft2}.

\subsection{Results}

\subsubsection{Nonlinear approximation rates}\label{subsub:nonlinearapproximation}

As introduced in \Cref{sub:CS-intro}, a standard way to quantify the approximation properties of a representation system is through its best $N$-term nonlinear approximation error:
$$
\sigma_N(f) =  \|f-f_N\|_2^2,
$$ 
see \Cref{def:nonlinear-approx}.
The associated nonlinear approximation rate is the asymptotic decay of $\sigma_N(f)$ as $N \to \infty$. For the class of two-dimensional piecewise regular functions considered here, the optimal benchmark is of order $\sigma_N(f) \asymp N^{-2}$ \cite{Devore1998}. Two-dimensional wavelets can reach a nonlinear approximation rate of $\mathcal{O}(N^{-1})$ \cite{Mallat1998}, while shearlets can perform nearly optimally (see Appendix~\ref{shearletAppendix} for additional details).

We now present numerical simulations comparing the nonlinear approximation properties of the  wavelet and shearlet frames introduced in $\S$\ref{numericsDigitalFrames}, with the class of real-valued functions introduced in $\S$\ref{numericsFuncConstruction}. In particular, we compare how the number of required coefficients $N$ for a prescribed nonlinear approximation error threshold depends on the geometric complexity of the underlying functions.

\begin{figure}
\centering
\begin{minipage}[b]{0.6\textwidth}
\centering
\includegraphics[width=\textwidth]{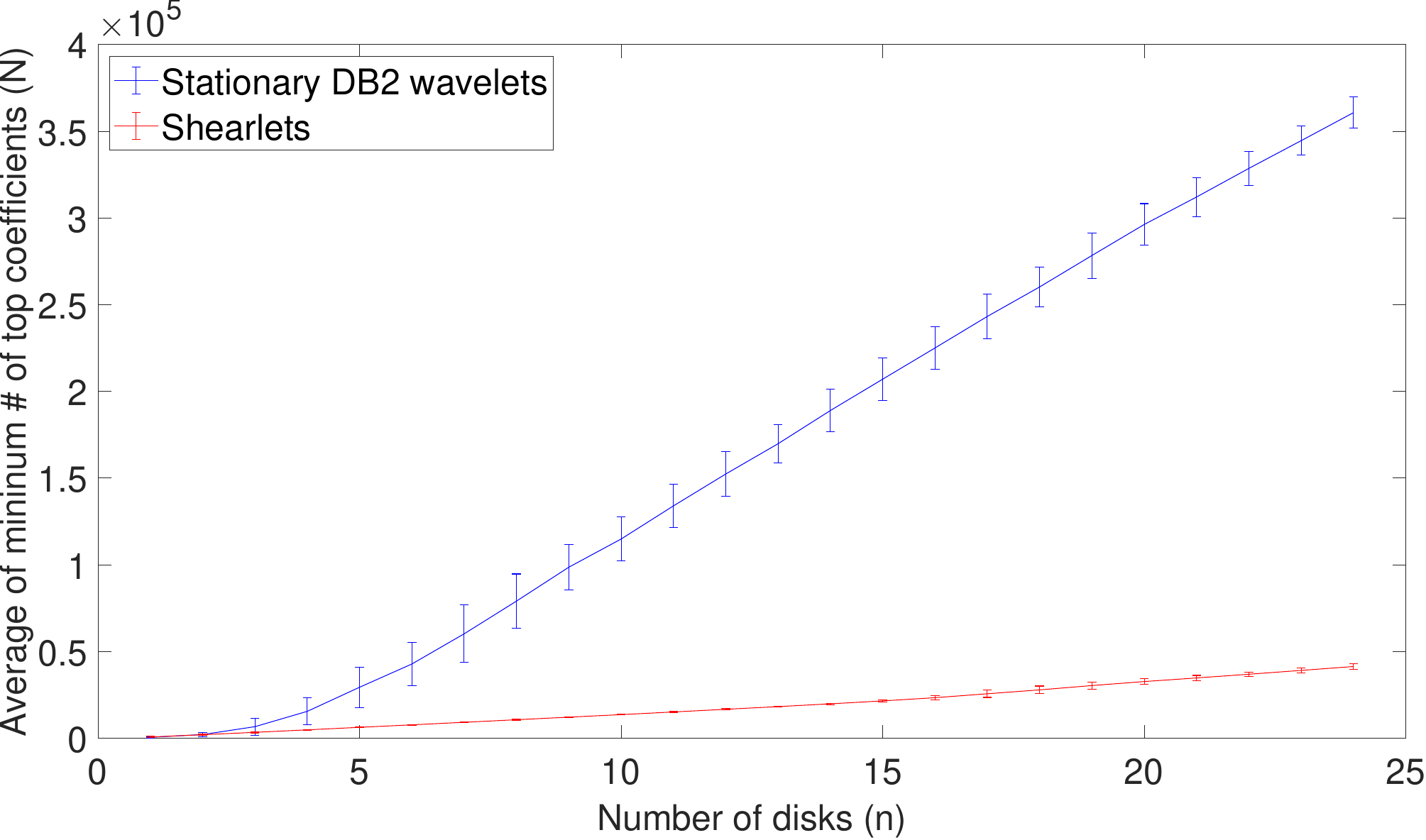}
\captionsetup{width=1.4\textwidth}
\caption{Minimum values of $N$ needed to achieve $\sigma_N(g^\dagger_{256}) \leq \varepsilon^2$ with $\varepsilon=25$, for test functions $g^\dagger_{256}$ with support boundary length $nL$ for wavelets and shearlets.}

\label{fig:coeffDataGraph}
\end{minipage}
\end{figure}

In \Cref{fig:coeffDataGraph}, we plot two curves, corresponding to wavelets and shearlets, that show how the minimum $N$'s needed to ensure $\sigma_N(f) \leq \varepsilon^2$ for $\varepsilon = 25$ depends on the number $n$ in \eqref{groundTruthImage}. More precisely, we denote the disk circumference $L = 2\pi r = \frac{\pi}{6}$ (see the construction in \Cref{numericsFuncConstruction}). For each support boundary length $nL$ with $1 \leq n \leq 24$ ($x$-axis)\footnote{The perimeter of the boundary is a common measure to quantify nonlinear approximation rates \cite{Mallat1998}, which motivates the use of $n$ in the $x$-axis.}, we generate $100$ random functions on $[0,1]^2$ of the form given in \eqref{groundTruthImage} and evaluate them on the uniform $256 \times 256$ grid $G_{256}$ as defined in \eqref{uniformEvaluationGrid}. On the $y$-axis, 
we plot the sample mean $\pm 1$ standard deviation of the minimum $N$'s such that $\sigma_N(g^\dagger_{256}) \leq \varepsilon^2$. As expected from the preceding theoretical discussion, shearlets yield a much sparser representation of this class of signals: in the notation of \Cref{sub:CS-intro}, we have $s_{\rm shear}\ll s_{\rm wave}$.

\subsubsection{Phase diagrams} \label{numericsPhaseDiagrams}

Phase diagram analysis provides a convenient way to summarize how the performance of a reconstruction method changes as the structural complexity of the unknown signals and the amount of available data vary. In compressed sensing, this point of view goes back to the seminal work of Donoho and Tanner \cite{Donoho2009, Donoho2010}, who showed that for Gaussian sensing matrices and $\ell^1$-based recovery, one observes a phase transition phenomenon in the diagrams. More specifically, above a critical curve recovery succeeds with high probability, whereas below it, recovery typically fails. These results make phase diagrams a natural numerical tool for visualizing the trade-off between complexity of our signals and sampling of measurements.

For our wavelet and shearlet settings, guided by the data displayed in \Cref{fig:coeffDataGraph}, we adopt this idea in an empirical form adapted to the analysis formulation. Rather than indexing the horizontal axis of the phase diagrams by coefficient sparsity, we parametrize the family of the piecewise regular functions on $[0, 1]^2$ constructed as in \eqref{groundTruthImage} by the length of the boundaries of their respective smooth compact supports. More precisely, the columns of the phase diagram correspond to the uniformly increasing support boundary lengths $nL$, while the rows correspond to subsampling percentages $\rho\%$, with $1 \leq \rho \leq 20$ and $1 \leq n \leq 24$. Notice that in the notation of \Cref{corollary2.9} and \Cref{numericsDistribution} the subsampling percentage $\rho$ corresponds to $\rho = 100 \cdot \frac{m}{K^2}$ since the number of sampled measurements is denoted by $m$ and our arrays of Fourier coefficients are of dimension $K \times K$. This allows us to better compare the performance of wavelets and shearlets. The range for the subsampling percentage $\rho$ is arbitrary and is chosen sufficiently wide so that the phase transition behavior can be clearly seen. 

At each grid point $(n, \rho)$, we show the probability of success in recovering images with $n$ disks with $\rho$\% subsampled measurements, as we now describe. We fix a positive integer $T$ and for each grid point $(n, \rho)$, we generate $T$ pairs of test functions with $n$ disks and measurement sampling masks with $\rho\%$ subsampling rate (with $\rho\%$ different Fourier measurements) and solve the corresponding compressed sensing problem as in \Cref{numericsAnalysis}. In all phase-diagram experiments we take \(T=6\). 

For $t = 1,\dots,T,$ let $g^\dagger_{(t,n,\rho)}$ denote the $t$-th, $256 \times 256$ ground truth digital image corresponding to the parameter pair $(n, \rho),$ and let $\bar g_{(t,n,\rho)}$ denote the associated reconstructed $256 \times 256$ digital image. We define the absolute and relative reconstruction errors by
$$
E_{\mathrm{abs}}^{(t,n,\rho)}
=
\bigl\|g^\dagger_{(t,n,\rho)}-\bar g_{(t,n,\rho)}\bigr\|_2,
$$
and
$$
E_{\mathrm{rel}}^{(t,n,\rho)}
=
\frac{\bigl\|g^\dagger_{(t,n,\rho)}-\bar g_{(t,n,\rho)}\bigr\|_2}
{\bigl\|g^\dagger_{(t,n,\rho)}\bigr\|_2},
$$
respectively.

Given an absolute error threshold $\varepsilon_a>0$, we declare a reconstruction successful whenever the corresponding absolute reconstruction error is at most $\varepsilon_a$. Thus, for the absolute-error phase diagram, the value shown at the grid point $(n, \rho)$ is
$$
P_{\mathrm{abs}}(n,\rho)
= \frac{1}{T}
\sum_{t=1}^T
\chi_{\{E_{\mathrm{abs}}^{(t,n,\rho)}\leq \varepsilon_a\}}.
$$
Similarly, given a relative error threshold $\varepsilon_r>0$, we declare a reconstruction successful whenever the corresponding relative reconstruction error is at most $\varepsilon_r$. Thus, for the relative-error phase diagram, the displayed value at the grid point $(n, \rho)$ is
$$
P_{\mathrm{rel}}(n,\rho)
=
\frac{1}{T}
\sum_{t=1}^T
\chi_{\{E_{\mathrm{rel}}^{(t,n,\rho)}\leq \varepsilon_r\}}.
$$
These two quantities are the proportions of successful reconstructions (where success depends on $\varepsilon_a$ and $\varepsilon_r$) within the $T$ experiments, that is, the empirical probabilities obtained by assigning equal weight to each of the $T$ trials.

An additional advantage of this framework is that each reconstruction problem associated with a parameter triple $(t,n,\rho)$ can be solved independently of all others. Thus, the computation of the full phase diagram is a highly parallelizable task, which makes it well suited to large-scale numerical experimentation. In the following, we apply this general phase diagram methodology to our two sparsifying frames, namely wavelets and shearlets, with subsampled Fourier measurements, as described in \Cref{numericsDigitalFrames,numericsDistribution}.

We obtain the phase diagrams for wavelet/shearlet sparsifying frames with an absolute error threshold of $\varepsilon_a = 25$ in \Cref{fig:analysisWavAbsPhaseDiag,fig:analysisShearAbsPhaseDiag} and with a relative error threshold of $\varepsilon_r = 0.15$ in \Cref{fig:analysisWavRelPhaseDiag,fig:analysisShearRelPhaseDiag}, respectively.
Despite the substantially smaller sparsity with shearlets (see $\S$\ref{subsub:nonlinearapproximation} and \Cref{fig:coeffDataGraph}), these phase diagrams show only a minor advantage when shearlets are used rather than wavelets for this dataset (roughly a factor of 2 in the sample complexity). This suggests that the large advantage in terms of sparsity provided by shearlets does not translate into a comparable gain in terms of sample complexity.

\begin{figure}[t]
\centering

\begin{subfigure}[t]{0.49\textwidth}
    \centering
    \includegraphics[width=\linewidth]{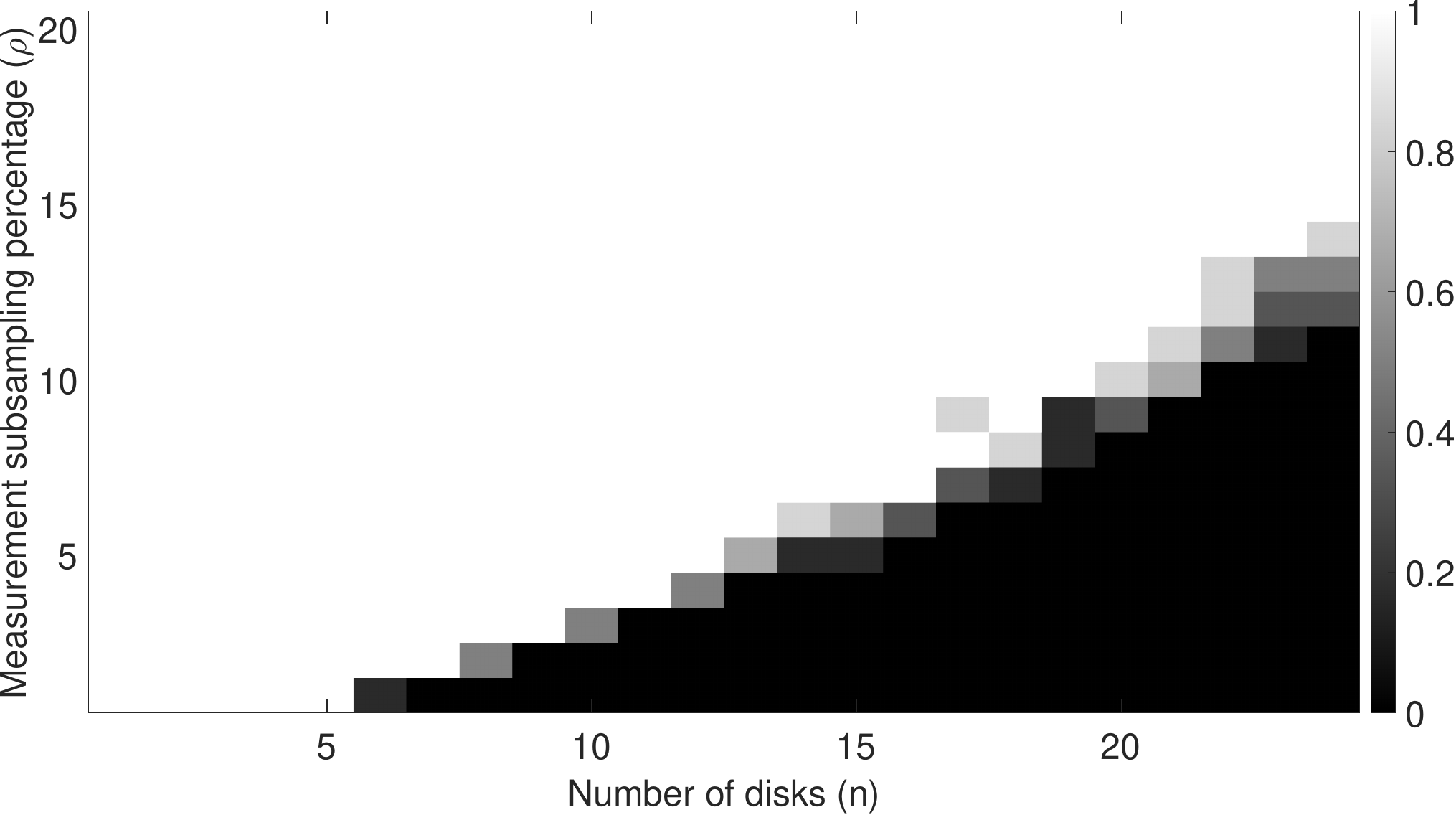}
    \caption{Absolute-error criterion $\varepsilon_a = 25$, stationary Daubechies--2 (DB2) wavelets with four levels.}
    \label{fig:analysisWavAbsPhaseDiag}
\end{subfigure}
\hfill
\begin{subfigure}[t]{0.49\textwidth}
    \centering
    \includegraphics[width=\linewidth]{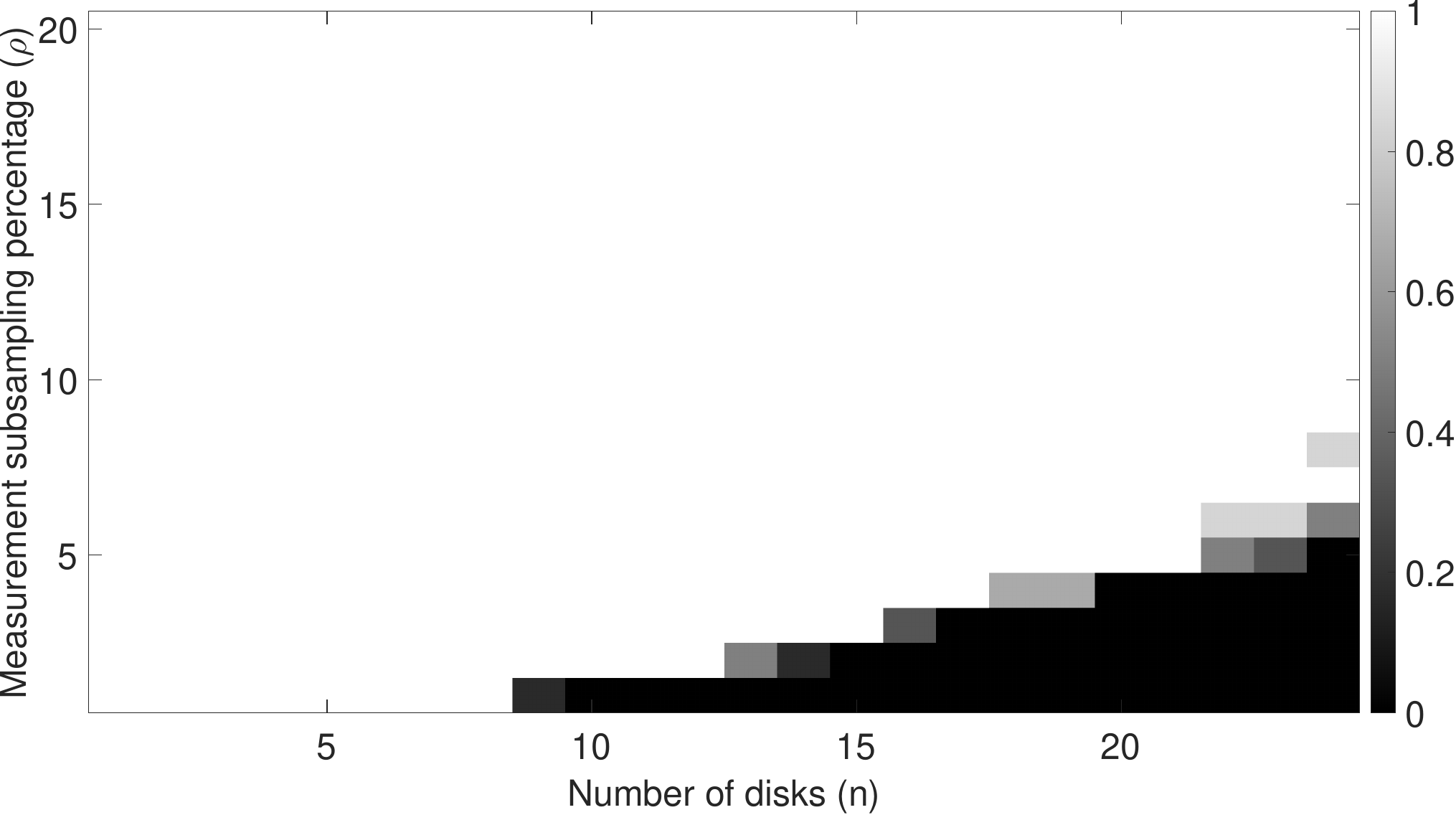}
    \caption{Absolute-error criterion $\varepsilon_a = 25$, shearlets with scales up to 2.}
    \label{fig:analysisShearAbsPhaseDiag}
\end{subfigure}

\vspace{0.8em}

\begin{subfigure}[t]{0.49\textwidth}
    \centering
    \includegraphics[width=\linewidth]{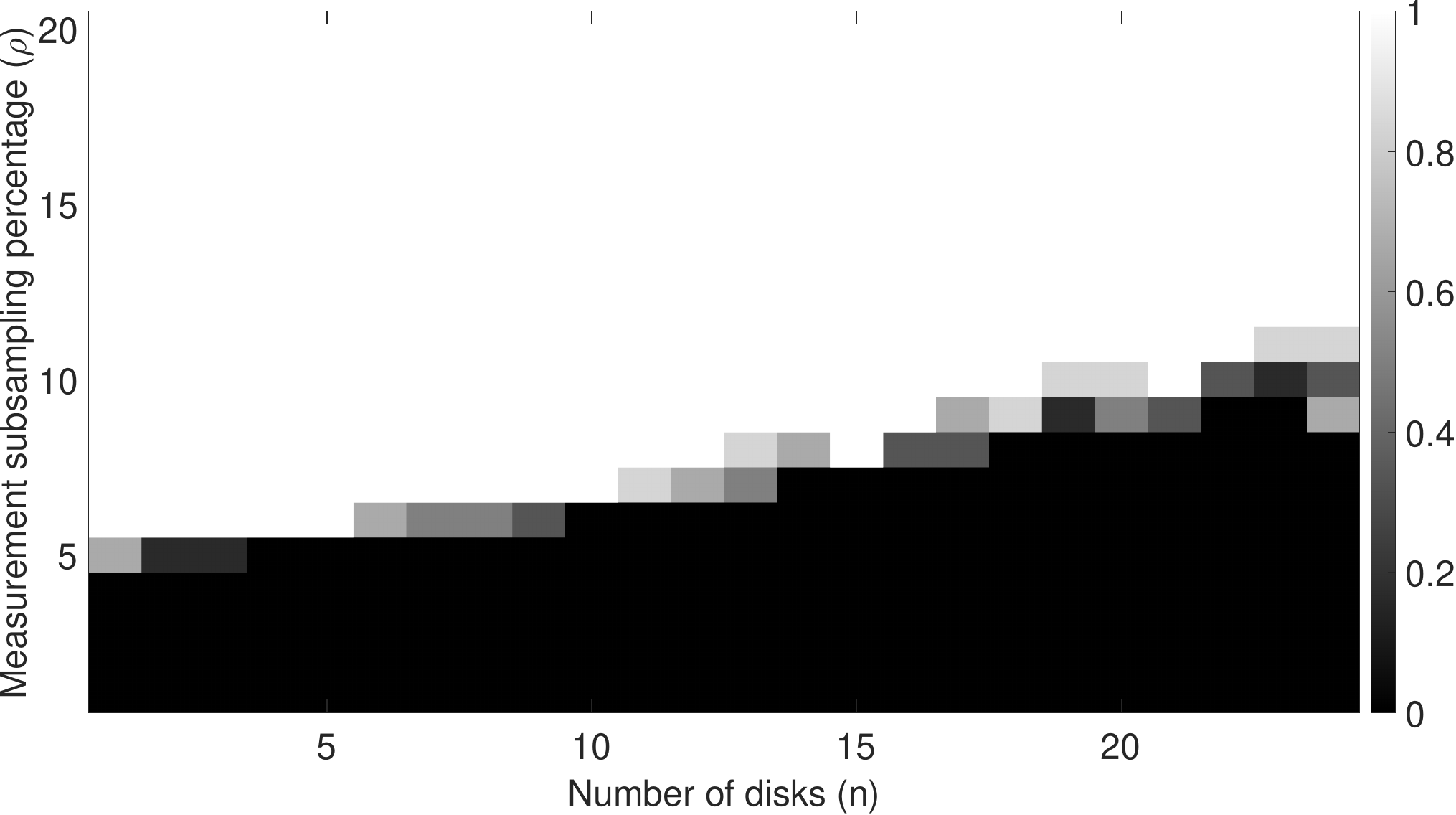}
    \caption{Relative-error criterion $\varepsilon_r = 0.15$, stationary Daubechies--2 (DB2) wavelets with four levels.}
    \label{fig:analysisWavRelPhaseDiag}
\end{subfigure}
\hfill
\begin{subfigure}[t]{0.49\textwidth}
    \centering
    \includegraphics[width=\linewidth]{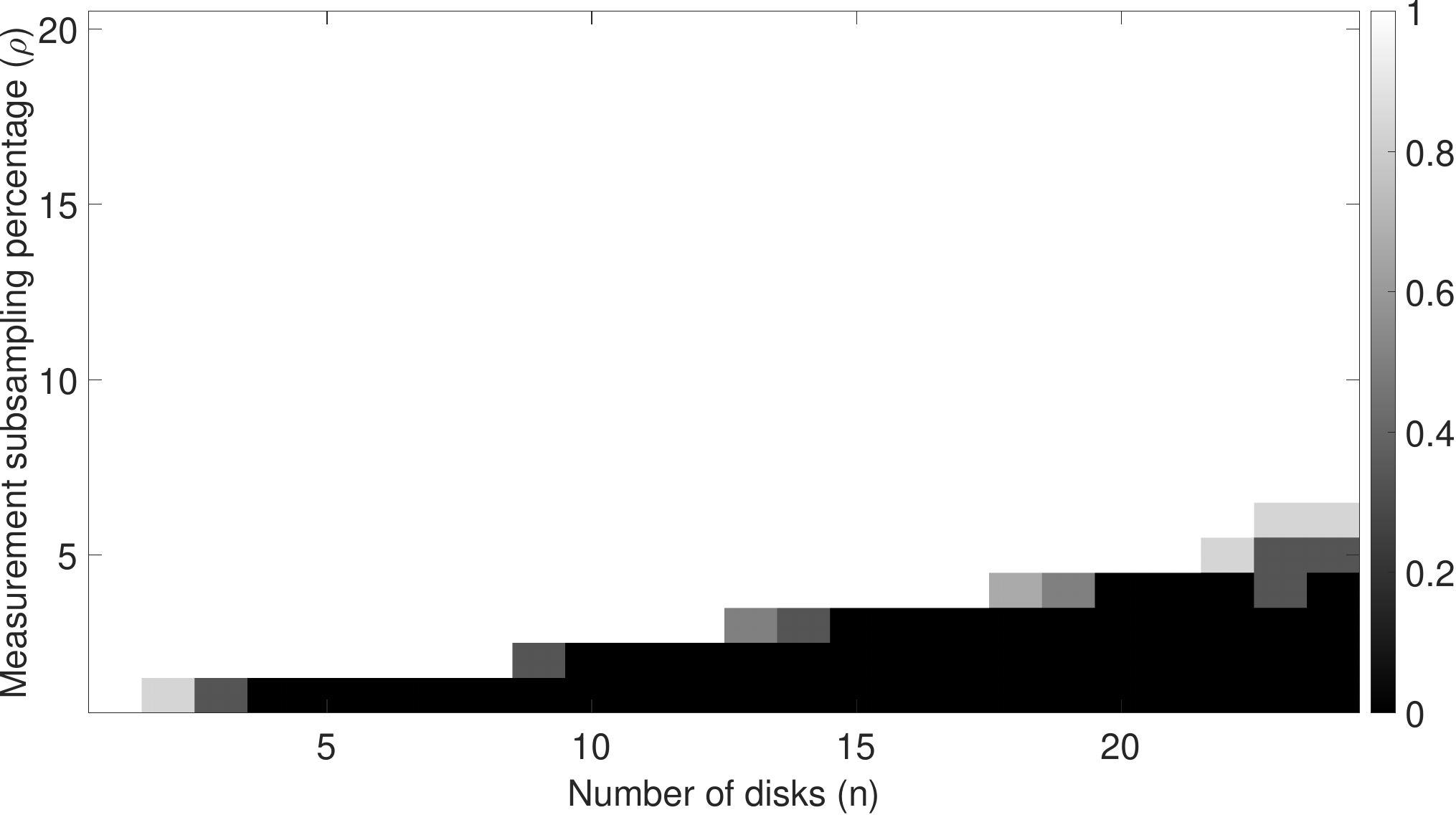}
    \caption{Relative-error criterion $\varepsilon_r = 0.15$, shearlets with scales up to 2.}
    \label{fig:analysisShearRelPhaseDiag}
\end{subfigure}

\captionsetup{width=0.95\textwidth}
\caption[Phase diagrams for wavelet and shearlet analysis reconstruction]{Phase diagrams for the empirical success probability of the analysis $\ell^1$ reconstruction from subsampled Fourier measurements. In each panel, the horizontal coordinate is the support-boundary length parameter $nL$ of the test-image class and the vertical coordinate is the sampling percentage $\rho$; the color at $(n,\rho)$ is the proportion of successful reconstructions over the $T$ independent trials. The top row reports $P_{\mathrm{abs}}(n,\rho)$ with absolute-error threshold $\varepsilon_a=25$, while the bottom row reports $P_{\mathrm{rel}}(n,\rho)$ with relative-error threshold $\varepsilon_r=0.15$.}
\label{fig:phaseDiagrams}
\end{figure}

\section{Conclusions}

In this paper, we investigated the sample complexity of compressed sensing when using shearlets as the sparsifying frame, systematically comparing their performance against standard wavelets. From a theoretical perspective, we detailed the obstacles that arise when applying known recovery theorems to shearlet systems; specifically, we showed that bounding the localization factor to control frame redundancy yields suboptimal bounds that can be exponentially large. As a result, deriving an explicit, optimal sample complexity estimate for shearlets appears out of reach using the currently available estimates. Furthermore, even  when the localization factor is assumed to be bounded, we show that the coherence properties satisfied by shearlets yield sample complexity estimates that do not improve on the corresponding wavelet estimates by more than logarithmic factors. 

To complement this analysis, we conducted a comprehensive numerical investigation on a dataset of piecewise regular, cartoon-like images. While our empirical results confirmed that shearlets offer substantially better nonlinear approximation properties and require substantially fewer coefficients for accurate representation compared to wavelets, our phase diagram analysis revealed that this better sparsity does not translate into a proportional reduction in sample complexity. Ultimately, the number of subsampled Fourier measurements required for successful signal recovery remains comparable for both representation systems.

These findings indicate several open directions for future research. First, it is natural to wonder whether an approach based on multilevel sparsity \cite{Poon2017} could allow us to improve the sample complexity estimates. Second, additional numerical simulations should be conducted in the infinite-dimensional setting to determine if theoretical obstacles, for example the balancing property, impact practical recovery rates differently than in the finite digital implementations evaluated here. Finally, while our current numerical experiments used a simple dataset of digital images to help make a clear and systematic comparison between the two sparsifying frames, future investigations should assess whether these sample complexity behaviors persist when applied to more realistic or less structured datasets.

\section*{Acknowledgements}
The authors would like to thank Hartmut Führ, Demetrio Labate, Philipp Petersen, and Felix Voigtlaender for useful insights related to the linear independence of shearlets. It is a pleasure to thank Filippo De Mari for multiple discussions on the localization factor for shearlets.

Co-funded by the European Union (ERC, SAMPDE, 101041040). Views and opinions expressed are  those of the authors only and do not necessarily reflect those of the European Union or the European Research Council. Neither the European Union nor the granting authority can be held responsible for them. 
The research was supported in part by the MIUR Excellence Department Project awarded to Dipartimento di Matematica, Università di Genova, CUP D33C23001110001.
 The authors are members of the “Gruppo Nazionale per l’Analisi Matematica, la Probabilità e le loro Applicazioni” of the “Istituto Nazionale di Alta Matematica”.

\appendix

\section{Infinite-dimensional compressed sensing} \label{metaResultAppendix}

While \Cref{corollary2.9} is a sufficient model for our specific numerical experiments, results for infinite sparsifying and measurement frames also exist. The assumption of \cite[Theorem 4.4]{Poon2017} on the operators $V$ and $D$ is that they are isometries, which is equivalent to the frames $\{\varphi_l\}_{l \in L}$ and $\{\psi_i\}_{i \in I}$ both being Parseval. This is suitable for our case because cone-adapted shearlets can form a Parseval frame  with a certain class of bandlimited shearlets \cite{Guo2007}. Our measurements are Fourier measurements, corresponding to $\{\varphi_l\}_{l \in L}$ being an orthonormal basis of complex exponentials. The probability distribution for sampling the measurements in the case of \cite[Theorem 4.4]{Poon2017} considers a multilevel sampling scheme where the wavelet coefficients are divided into levels with sparsities $s_k$ in each level and the Fourier measurements are divided into corresponding frequency bands with $m_k$ measurements in the $k$-th band. A result given in \cite[Corollary 1]{Alberti2021} also considers nonuniform distributions. 

In this subsection, we discuss a simplified version of the results given in \cite[Corollary 1]{Alberti2021} and \cite[Theorem 4.4]{Poon2017} to show the additional difficulties of the infinite-dimensional setting, especially in comparison to the  assumptions of \Cref{corollary2.9}. As we are interested in the case where both the measurement and the sparsifying frames are Parseval, for simplicity we state the balancing property taken from \cite{Alberti2021} restricted to this case. We consider the setting of \Cref{sec:frametheory}, with $I = L = \N$.

The first new ingredient of the infinite-dimensional setting is the balancing property.

\begin{definition} \label{balancingProperty}
    Let $s, N \in \N$ be such that $2 \leq s \leq N$. We say that $M \in \N$ satisfies the \emph{balancing property} with respect to $V$, $D$, $N$ and $s$ if for all $\Delta \subseteq \{1, \dots, N\}$ with $|\Delta| = s$ we have:
    $$\|P_\mathcal{W}V^*P_M^\perp VP_\mathcal{W}\|_{\mathcal{H} \to \mathcal{H}} \leq \frac{1}{8\sqrt{\log s}},$$
    and
    $$\|P_\Delta^\perp DP_\mathcal{W}^\perp V^*P_M V P_\mathcal{W}\|_{\mathcal{H} \to \ell^\infty} \leq \frac{1}{14 \sqrt{s }},$$
    where $\mathcal{W}:= \operatorname{Im}(D^*P_\Delta)$.
\end{definition}

For the more general case of multilevel sparsity and even for the case where the shearlet frame is not necessarily tight, certain methods to show that they can still satisfy \Cref{balancingProperty} have been developed in \cite[Section 3]{Ma2016thesis}. The key step in these results is to show that the shearlet frame in question is formed by what are called $\alpha$-\emph{shearlet molecules} \cite{Grohs2016} which guarantees that the shearlets have suitable regularity and decay properties. 

Although the results in \cite{Ma2016thesis} focus on a specific frame of compactly supported shearlets that do not form a tight frame, bandlimited shearlets that form Parseval frames (see \cite{Guo2013} and Appendix~\ref{shearletAppendix}) are also $\alpha$-shearlet molecules as shown in \cite[Proposition 3.11 (ii)]{Grohs2016}. We need the following notations:
$$B_{s, N} = \max\{B_\Delta\,:\, \Delta \subseteq \{1,\dots,N\}, \,  |\Delta| \leq s\}$$
where
$$B_\Delta = \max\{\|D P_\mathcal{W}^\perp D^*\|_{\ell^\infty \to \ell^\infty}, \, 1\}.$$
Further, for $\alpha \in (0, 1]$, let $\Tilde{N}(\alpha)$ be the smallest integer larger than or equal to $N$ such that  
$$\|P_M V D^{*}e_i\|_2 < \alpha,\quad \forall i>\Tilde{N}(\alpha).$$

\begin{remark}
    $B_{s, N}$ depends on the sparsifying frame and it is known that $\{\psi_i\}_{i\in I}$ being Parseval does not guarantee that $B_{s, N}$ is finite unless it is an orthonormal basis \cite[Example 2]{Alberti2021}. For $\Tilde{N}$, this function can be bounded if the sparsifying frame is an orthonormal basis and $\sup_{l \leq M}|\langle \psi_i, \varphi_l\rangle|$ decays sufficiently in $i$. Also, if $I = \{1, \dots, N\}$ is finite, we simply have $\Tilde{N}(\alpha) = N$ for all $\alpha \in (0, 1]$ \cite[Remark 5]{Alberti2021}. However, the types of frames of shearlets that are Parseval and have good nonlinear approximation properties like the one given in Appendix~\ref{shearletAppendix}, neither have finitely many elements nor are they orthonormal bases.
\end{remark}

Although we do not provide  full statements for the infinite-dimensional results from \cite{Poon2017, Alberti2021}, we draw a quick comparison between these results and \Cref{corollary2.9}. The assumptions on the coherence, on the localization factor and on the non-uniform sampling of the measurements are similar, as well as the optimization problem and the recovery estimate. The first main difference in the infinite-dimensional context is the need to truncate the available samples with the balancing property, yielding $M$ scalar measurements to sample from. The second main difference is in the sample complexity: the bound for the infinite-dimensional results reads
$$m \geq C_1\eta_{s, D}^2 \|\omega\|_2^2s B_{s,N}^2 \log\bigg(\Tilde{N}\bigg(\frac{C_2}{Mm\|\omega\|_2^2\sqrt{s}} \bigg)\bigg).$$
If we compare this with \eqref{eq:cor:m}, we see that the main difference lies in the presence of $B_{s,N}^2$ and of $\tilde N$ in the logarithmic factor. Finding suitable upper bounds of these quantities for shearlets would not be straightforward.

\section{Introduction to parabolic molecules} \label{parabolicMoleculesAppendix}

As also mentioned in Appendix~\ref{metaResultAppendix}, we are interested in certain classes of shearlets, which are also $\alpha$-molecules. These molecules are also referred to as \emph{parabolic molecules} \cite{Grohs2013}. Decay properties of parabolic molecules are useful for our setting, so in this subsection we describe their setup following \cite[Section 2]{Grohs2013}. We define the parameter space
$$
\mathbb{P} := \mathbb{R}_+ \times \mathbb{T} \times \mathbb{R}^2,
$$
where a point $p = (s,\theta,x)\in\mathbb{P}$ describes a scale $2^s$, an orientation $\theta$ on the torus $\mathbb{T}$, and a location $x\in\R^2$. Parabolic molecules are frames of functions $(m_\lambda)_{\lambda\in\Lambda}$, where each $m_\lambda \in L^2(\mathbb{R}^2)$ satisfies some additional regularity properties and each molecule is associated to a unique point in $\mathbb{P}$.

\begin{definition} \label{parametrizationDef}
\begin{enumerate}
    \item A \emph{parametrization} consists of a pair $(\Lambda,\Phi_\Lambda)$ where $\Lambda$ is an index set and $\Phi_\Lambda$ is a mapping
    $$
    \Phi_\Lambda :
    \Lambda\ni\lambda\mapsto (s_\lambda,\theta_\lambda,x_\lambda)\in\mathbb{P}
    $$
    that associates to each $\lambda\in\Lambda$ a scale $s_\lambda$, a direction $\theta_\lambda$ and a location $x_\lambda\in\mathbb{R}^2$.

\item A \emph{canonical parametrization} is the pair $(\Lambda^0, \Phi^0)$ given by
    $$
    \Lambda^0 := \left\{(j,l,k)\in\mathbb{Z}^4 : j\geq 0,\; l=-2^{\left\lfloor \frac{j}{2}\right\rfloor-1},\ldots,2^{\left\lfloor \frac{j}{2}\right\rfloor-1}\right\},
    $$
    where for $\lambda=(j,l,k)$ we define $\Phi^0(\lambda):=(s_\lambda,\theta_\lambda,x_\lambda)$ with $s_\lambda:=j$, $\theta_\lambda:=l\,2^{-\left\lfloor j/2\right\rfloor}\pi$, $x_\lambda:=R_{-\theta_\lambda}D_{2^{-s_\lambda}}k$,  $D_a:=\operatorname{diag}(a,\sqrt{a})$ associated with $a>0$ and
    $$
    R_\theta =
    \begin{pmatrix}
    \cos(\theta) & -\sin(\theta)\\
    \sin(\theta) & \cos(\theta)
    \end{pmatrix}.
    $$
\end{enumerate}
\end{definition}

\begin{definition} \label{parabolicMoleculeDef}
    Let $(\Lambda, \Phi_\Lambda)$ be a parametrization. A family $(m_\lambda)_{\lambda\in\Lambda}$ is called a \emph{family of parabolic molecules} of order $(R,M,N_1,N_2)$ if it can be written as
    $$    m_\lambda(x)=2^{3s_\lambda/4}a^{(\lambda)}\bigl(D_{2^{s_\lambda}}R_{\theta_\lambda}(x-x_\lambda)\bigr)
    $$
    and if there exists $C > 0$ such that for all $\lambda \in \Lambda$ we have,
    $$
    \left|\partial^\beta \hat{a}^{(\lambda)}(\xi)\right|
    \leq
    C\min\left(1,2^{-s_\lambda}+|\xi_1|+2^{-s_\lambda/2}|\xi_2|\right)^M(1 + \xi_1^2 + \xi_2^2) ^{-N_1/2}(1 + \xi_2^2)^{-N_2/2},\quad \xi\in\R^2,
    $$
    for all $|\beta|\leq R$. 
\end{definition}

To continue, we need a notion of distance between the indices of parabolic molecules. The parameter space $\mathbb{P}$ can be equipped with a notion of pseudo-distance, see \cite{Smith1998}.

\begin{definition} \label{indexDistDef}
    Let $(\Lambda, \Phi_\Lambda)$ and $(\Gamma, \Phi_\Gamma)$ be two parametrizations. For two indices $\lambda \in \Lambda$ and $\gamma \in \Gamma$, we define the \emph{index distance}
    $$\omega(\Phi_\Lambda(\lambda), \Phi_\Gamma(\gamma)) = 2^{|s_\lambda - s_\gamma|}(1 + 2^{\min(s_\lambda, s_\gamma)}d(\Phi_\Lambda(\lambda), \Phi_\Gamma(\gamma))),$$
    where
    $$d(\Phi_\Lambda(\lambda), \Phi_\Gamma(\gamma)) = |\theta_\lambda - \theta_\gamma|^2 + |x_\lambda - x_\gamma|^2 + |\langle e_\lambda, x_\lambda - x_\gamma\rangle|$$
    and  $e_\lambda = (\cos(\theta_\lambda), \sin(\theta_\lambda))^T$.
\end{definition}

We need the following summability result associated with the canonical parametrization.

\begin{lemma}[{\cite[Lemma 2.8]{Grohs2013}}]\label{LadmissibleLemma}
    Let $L > 2$. Then the canonical parametrization $(\Lambda^0, \Phi^0)$ is $L$-\emph{admissible} in the sense that,
    $$\sup_{\lambda \in \Lambda^0} \sum_{\mu \in \Lambda^0} \omega(\Phi^0(\lambda), \Phi^0(\mu))^{-L} < \infty.$$
\end{lemma}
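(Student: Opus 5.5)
The plan is to fix $\lambda=(j,l,k)\in\Lambda^0$, split the sum over $\mu=(j',l',k')$ according to the scale $j'$, bound each single-scale sum, and then sum over $j'$ with the factor $2^{|j-j'|}$ supplied by $\omega$. Write $m=\min(j,j')$. By \Cref{indexDistDef},
\[
\omega(\Phi^0(\lambda),\Phi^0(\mu))^{-L}=2^{-L|j-j'|}\bigl(1+2^{m}d(\Phi^0(\lambda),\Phi^0(\mu))\bigr)^{-L}.
\]
It therefore suffices to prove the single-scale estimate
\[
S_{j'}(\lambda):=\sum_{l',k'}\bigl(1+2^{m}d(\Phi^0(\lambda),\Phi^0(\mu))\bigr)^{-L}\lesssim 1+2^{2(j'-j)_+},
\]
with a constant independent of $\lambda$ and $j'$. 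Given this, the total sum is bounded by
\[
\sum_{j'\ge 0}2^{-L|j-j'|}\bigl(1+2^{2(j'-j)_+}\bigr)\le \sum_{t\in\Z}2^{-L|t|}+\sum_{t\ge 0}2^{(2-L)t}<\infty,
\]
uniformly in $j$. This is exactly where the hypothesis $L>2$ enters.

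To prove the single-scale estimate, I will compare the lattice sum with an integral. At scale $j'$ the orientations $\theta_\mu$ are spaced by $\asymp 2^{-j'/2}$. For fixed $\theta_\mu$, the locations $x_\mu=R_{-\theta_\mu}D_{2^{-j'}}k'$ form a rotated rectangular lattice with spacing $2^{-j'}$ along $e_\mu$ and $2^{-j'/2}$ orthogonal to it. The density of parameter points in $(\theta,x)$ is therefore $\asymp 2^{2j'}$. In coordinates adapted to $\lambda$, write $\vartheta=\theta-\theta_\lambda$, $u=\langle e_\lambda,x-x_\lambda\rangle$ and $v$ for the orthogonal component. Then $d\asymp \vartheta^2+v^2+|u|+u^2$, and this quantity is comparable to a quasi-norm that is homogeneous with weights $(1/2,1/2,1)$ in $(\vartheta,v,u)$, whose homogeneous dimension is $2$. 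Substituting $\vartheta=2^{-m/2}\vartheta'$, $v=2^{-m/2}v'$, $u=2^{-m}u'$ gives
\[
\int\bigl(1+2^m d\bigr)^{-L}\,d\vartheta\,dv\,du\lesssim 2^{-2m}\int(1+\vartheta'^2+v'^2+|u'|)^{-L}\,d\vartheta'\,dv'\,du' .
\]
The last integral converges precisely when $L>2$, by integrating in polar coordinates with respect to the quasi-norm. Multiplying by the density $2^{2j'}$ gives the contribution $2^{2j'-2m}=2^{2(j'-j)_+}$. When $j'<j$ the lattice is coarser than the decay scale $2^{-m}$ of the integrand, so the comparison fails and is replaced by the trivial bound: only $O(1)$ lattice points can have $2^m d\lesssim 1$, and the remaining terms form a convergent lattice sum. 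This produces the $+1$ in the estimate.

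The main obstacle is making the sum-to-integral comparison rigorous. The lattice at scale $j'$ is aligned with $e_\mu$, whereas $d$ measures the longitudinal component along $e_\lambda$, and the two directions differ by $|\theta_\lambda-\theta_\mu|$. I will handle this by bounding $|\langle e_\lambda-e_\mu,x_\lambda-x_\mu\rangle|\le|\vartheta|\,|x_\lambda-x_\mu|\le\tfrac12(\vartheta^2+|x_\lambda-x_\mu|^2)$. This shows that $d$ computed with $e_\lambda$ and $d$ computed with $e_\mu$ are comparable up to constants, so on each orientation slice the lattice can be treated as axis-aligned. I will then run the comparison cell by cell, using on each cell of size $2^{-j'}\times2^{-j'/2}\times 2^{-j'/2}$ that $(1+2^m d)^{-L}$ varies by at most a bounded factor whenever $j'\ge m$, since $2^m d$ changes by $O(1)$ across such a cell.
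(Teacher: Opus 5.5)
Your scale-by-scale strategy is the standard argument behind this admissibility estimate: bound the orientation/translation sum at scale $j'$ by $2^{2(j'-j)_+}$, then sum $2^{-L|j-j'|}2^{2(j'-j)_+}$ using $L>2$. The paper itself gives no proof and only quotes \cite{Grohs2013}.

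\textbf{The gap.} One step is false with the definitions as written, and it is the step everything rests on. With $R_\theta$ the counterclockwise rotation given in the paper,
\[
x_\mu=R_{-\theta_\mu}D_{2^{-j'}}k'=2^{-j'}k_1'(\cos\theta_\mu,-\sin\theta_\mu)+2^{-j'/2}k_2'(\sin\theta_\mu,\cos\theta_\mu).
\]
So the fine axis of the lattice is $R_{-\theta_\mu}e_1=(\cos\theta_\mu,-\sin\theta_\mu)$, which makes an angle $2\theta_\mu$ with $e_\mu=(\cos\theta_\mu,\sin\theta_\mu)$. Your estimate $|\langle e_\lambda-e_\mu,x_\lambda-x_\mu\rangle|\le|\vartheta|\,|x_\lambda-x_\mu|$ does not address this mismatch.

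It cannot be patched, because the statement as literally written fails. Take $j\ge 4$, $l=2^{\lfloor j/2\rfloor-2}$ (so $\theta_\lambda=\pi/4$), $k=0$, and $\mu=(j,l,(k_1',0))$. Then $\langle e_\lambda,x_\mu\rangle=2^{-j}k_1'\cos(2\theta_\lambda)=0$, hence $\omega(\Phi^0(\lambda),\Phi^0(\mu))=1+2^{-j}(k_1')^2$ and
\[
\sum_{\mu}\omega(\Phi^0(\lambda),\Phi^0(\mu))^{-L}\ge\sum_{|k_1'|\le 2^{j/2}}2^{-L}\gtrsim 2^{j/2}\to\infty .
\]
The lemma holds only when $e_\lambda$ is the fine axis of the translation lattice. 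That means $e_\lambda:=R_{-\theta_\lambda}e_1$, which is also the axis along which the molecule $m_\lambda$ is thin. Equivalently, read $R_\theta$ as the clockwise rotation. State this normalization explicitly at the outset. Once you do, your alignment claim is true, replacing $e_\lambda$ by $e_\mu$ in $d$ via $|e_\lambda-e_\mu|\le|\vartheta|$ is valid, and the rest of your argument (per-slice lattice sums, then the angle sum, each converging because $L>2$) goes through.

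\textbf{Two smaller corrections.}

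First, since $m=\min(j,j')\le j'$ always, the lattice at scale $j'$ is never coarser than the decay scales $2^{-m}$, $2^{-m/2}$ of $(1+2^m d)^{-L}$. For $j'<j$ the two scales coincide, and your cell comparison directly gives $S_{j'}\lesssim 2^{2(j'-m)}=1$. Your separate ``trivial bound'' is therefore unnecessary, and its justification is misstated.

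Second, $d=\vartheta^2+u^2+v^2+|u|$ is not comparable to $\vartheta^2+v^2+|u|$ when $|u|\gtrsim 1$. Likewise, $2^m d$ does not change by $O(1)$ across a cell when $|u|$ is large. What you actually need, and what holds, is:
\begin{itemize}
\item the one-sided bound $d\ge\vartheta^2+v^2+|u|$ for the integral;
\item the fact that $1+2^m d$ changes by at most a bounded factor across a cell, e.g.\ $2^m|(u+2^{-j'})^2-u^2|\le 2|u|+1\le 2(1+2^m d)$ for $j'\ge m$.
\end{itemize}
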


Finally, we can state the analogue of intrinsic localization for general frames for parabolic molecules.

\begin{theorem}[{\cite[Theorem 2.9]{Grohs2013}}]\label{moleculeIntrinsicLocTheorem}
    Let $(\Lambda, \Phi_\Lambda)$ and $(\Gamma, \Phi_\Gamma)$ be two parametrizations. Let $(m_\lambda)_{\lambda \in \Lambda}$ and $(p_\gamma)_{\gamma \in \Gamma}$ be two families of parabolic molecules of order $(R, M, N_1, N_2)$ with
    $$R \geq 2L, \quad M > 4L - \frac{5}{4}, \quad N_1 \geq 2L + \frac{3}{4}, \quad N_2 \geq 2L$$
    for some $L >0 $.
    Then, there exists some $C > 0$ such that for every $\lambda \in \Lambda$ and $\gamma \in \Gamma$,
    $$|\langle m_\lambda, p_\gamma\rangle| \leq C\omega(\Phi_\Lambda(\lambda), \Phi_\Gamma(\gamma))^{-L}.$$
\end{theorem}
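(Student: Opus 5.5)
The plan is to pass to the frequency side via Plancherel and estimate
\[
\langle m_\lambda,p_\gamma\rangle=\int_{\R^2}\widehat{m_\lambda}(\xi)\,\overline{\widehat{p_\gamma}(\xi)}\,d\xi .
\]
By the symmetry of $\omega$ (see \Cref{indexDistDef}) I assume $s_\lambda\le s_\gamma$. From \Cref{parabolicMoleculeDef} I get
\[
\widehat{m_\lambda}(\xi)=2^{-3s_\lambda/4}\,\widehat{a^{(\lambda)}}\bigl(D_{2^{-s_\lambda}}R_{\theta_\lambda}\xi\bigr)\,e^{-2\pi i\langle x_\lambda,\xi\rangle},
\]
and the analogous formula for $p_\gamma$. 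The target factor $\omega^{-L}$ is $2^{-L|s_\lambda-s_\gamma|}$ times an inverse power of $1+2^{s_\lambda}d$, where $d$ has an angular part $|\theta_\lambda-\theta_\gamma|^2$ and a spatial part $|x_\lambda-x_\gamma|^2+|\langle e_\lambda,x_\lambda-x_\gamma\rangle|$. Since
\[
(1+2^{s}(a+b))^{-L}\gtrsim \min\bigl\{(1+2^{s}a)^{-L},(1+2^{s}b)^{-L}\bigr\},
\]
it suffices to prove a bound of the form
\[
2^{-L(s_\gamma-s_\lambda)}\,(1+2^{s_\lambda}|\theta_\lambda-\theta_\gamma|^2)^{-L}\,\bigl(1+2^{s_\lambda}|x_\lambda-x_\gamma|^2+2^{s_\lambda}|\langle e_\lambda,x_\lambda-x_\gamma\rangle|\bigr)^{-L}.
\]
I will in fact prove the product of the three factors, which is stronger.

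\textbf{Step 1 (scale and angle, no integration by parts).} I insert the envelope bounds with $\beta=0$.

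The factor $\min(1,2^{-s}+|\eta_1|+2^{-s/2}|\eta_2|)^M$, with $\eta=D_{2^{-s}}R_\theta\xi$, controls the low-frequency region. Combined with the decay $(1+|\eta|^2)^{-N_1/2}$ of the coarser molecule, it shows that only the region $|\xi|\approx 2^{s_\gamma}$ contributes appreciably, up to a factor $2^{-(s_\gamma-s_\lambda)(M-\text{const})}$. This is where the hypotheses $M>4L-5/4$ and $N_1\ge 2L+3/4$ enter. The $3/4$ shifts come from the normalisations $2^{-3s/4}$ and the Jacobian $2^{3s/2}$.

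The factor $(1+\eta_2^2)^{-N_2/2}$ forces $\xi$ to lie in a cone of aperture $\approx 2^{-s/2}$ about direction $\theta$. If $|\theta_\lambda-\theta_\gamma|\gg 2^{-s_\lambda/2}$, the two cones are separated. At $|\xi|\approx 2^{s_\gamma}$ the coarse cone's envelope then gives $(2^{s_\gamma/2}|\theta_\lambda-\theta_\gamma|)^{-N_2}$, which dominates $(1+2^{s_\lambda}|\theta_\lambda-\theta_\gamma|^2)^{-L}$ because $N_2\ge 2L$.

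Integrating the product of envelopes over $\xi$ then yields the first two factors.

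\textbf{Step 2 (spatial decay via integration by parts).} I write the phase as $e^{-2\pi i\langle x_\lambda-x_\gamma,\xi\rangle}$ and integrate by parts with a differential operator adapted to the coarser molecule $m_\lambda$, in rotated coordinates $\eta=D_{2^{-s_\lambda}}R_{\theta_\lambda}\xi$. Concretely I take
\[
\mathcal L=\Bigl(1+2^{s_\lambda}|\langle e_\lambda,z\rangle|^2+2^{s_\lambda/2}\ldots\Bigr)^{-1}\Bigl(1-\tfrac{1}{4\pi^2}\Delta_\eta\Bigr),
\qquad z=x_\lambda-x_\gamma ,
\]
chosen so that $\mathcal L$ reproduces the exponential, and apply it $L$ times (fewer than $R/2$ derivatives on each $\widehat{a}$, which is why $R\ge 2L$ suffices). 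The result is the anisotropic factor
\[
\bigl(1+2^{2s_\lambda}|\langle e_\lambda,z\rangle|^2+2^{s_\lambda}|z|^2\bigr)^{-L}\lesssim \bigl(1+2^{s_\lambda}d_{\rm space}\bigr)^{-L}.
\]
Some derivatives fall on $\widehat{a^{(\gamma)}}(D_{2^{-s_\gamma}}R_{\theta_\gamma}\,\cdot)$. In $\eta$-coordinates these come with factors $2^{s_\lambda-s_\gamma}\le 1$ and rotation factors of order $1+2^{s_\lambda/2}|\theta_\lambda-\theta_\gamma|$. Those factors must be absorbed into the angular decay of Step 1, which requires keeping some exponent in reserve there.

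\textbf{Main obstacle.} I expect the hardest part to be the bookkeeping that combines the three decays simultaneously. Specifically, the derivatives landing on the finer molecule $p_\gamma$, which is differently oriented, produce growth in $2^{s_\lambda/2}|\theta_\lambda-\theta_\gamma|$. This growth must not destroy the angular gain, and the surplus in the hypotheses $M,N_1,N_2,R$ over $L$ is exactly what pays for it.

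I would first handle the case $|\theta_\lambda-\theta_\gamma|\lesssim 2^{-s_\lambda/2}$, where the rotations are harmless. I would then treat the separated-angle case by splitting the exponents: using half of $N_2$ for angular decay and half to compensate the derivative losses, and afterwards multiplying the resulting bounds.
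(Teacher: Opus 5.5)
The paper gives no proof of this statement; it is quoted from \cite{Grohs2013}. Your overall architecture is the standard one: Plancherel, an envelope integral for the scale and angular decay, and integration by parts in frequency for the spatial decay. However, the step you flag as the main obstacle is a genuine gap, because the exponent budget you describe does not close.

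Write $\Delta\theta=\theta_\lambda-\theta_\gamma$ and $z=x_\lambda-x_\gamma$. Consider your operator built from $\Delta_\eta$; its weights should be $2^{2s_\lambda}$ and $2^{s_\lambda}$, as in your final display. Each $\partial_{\eta_1}$ landing on $\widehat{a^{(\gamma)}}(D_{2^{-s_\gamma}}R_{\theta_\gamma}\,\cdot)$ costs $2^{s_\lambda-s_\gamma}|\cos\Delta\theta|+2^{s_\lambda-s_\gamma/2}|\sin\Delta\theta|$. When $s_\lambda=s_\gamma$ this is of size $1+2^{s_\lambda/2}|\Delta\theta|$.

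After $L$ applications, up to $2L$ such derivatives can fall on that factor, so the loss is of order $(1+2^{s_\lambda}|\Delta\theta|^2)^{L}$. This loss is real: for $s_\lambda=s_\gamma$, $2^{-s_\lambda/2}\ll|\Delta\theta|\ll 1$ and $z$ parallel to $e_\lambda$, the tilted finer molecule genuinely extends a distance $\approx 2^{-s_\lambda/2}|\Delta\theta|$ along $e_\lambda$.

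Keeping the full angular factor in your three-factor product therefore requires $(2^{s_\lambda/2}|\Delta\theta|)^{-4L}$ from the envelope integral. Your Step 1 takes angular decay only from the $N_2$ factor, namely $(2^{s_\lambda/2}|\Delta\theta|)^{-N_2}$. For $N_2=2L$ this is entirely consumed by the angular factor itself. Splitting it in half gives only $(1+2^{s_\lambda}|\Delta\theta|^2)^{-L/2}$ for the angle, and pays for only half of the loss. Under your accounting you would need $N_2\ge 4L$. Alternatively, you would have to extract extra angular decay from the $N_1$ factor in every frequency region, while that factor is also paying for the scale decay. Neither is in the proposal.

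The missing idea is to stop aiming for the full product. Instead, normalise the operator by the angular loss and let the angular decay do double duty. With $A=2^{s_\lambda}|\Delta\theta|^2$, $B=2^{s_\lambda}|\langle e_\lambda,z\rangle|$ and $C=2^{s_\lambda}|z|^2$, take
\[
\mathcal L=I-\frac{2^{2s_\lambda}}{4\pi^2(1+A)}\,\partial_{e_\lambda}^2-\frac{2^{s_\lambda}}{4\pi^2}\,\partial_{e_\lambda^\perp}^2 .
\]
Since $s_\lambda\le s_\gamma$ and $|\sin\Delta\theta|\le|\Delta\theta|$, every scaled derivative now produces only an $O(1)$ factor on both amplitudes. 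The hypothesis $R\ge 2L$ covers the up to $2L$ derivatives that may land on a single amplitude (not ``fewer than $R/2$ on each''). The multiplier that $\mathcal L$ produces on the exponential is $\gtrsim 1+B^2/(1+A)+C$. Together with the envelope bound $2^{-L|s_\lambda-s_\gamma|}(1+A)^{-L}$ you get
\[
|\langle m_\lambda,p_\gamma\rangle|\lesssim 2^{-L|s_\lambda-s_\gamma|}\Bigl[(1+A)\Bigl(1+\frac{B^2}{1+A}+C\Bigr)\Bigr]^{-L}\le 2^{-L|s_\lambda-s_\gamma|}\bigl(1+A+B^2+C\bigr)^{-L}\lesssim\omega(\Phi_\Lambda(\lambda),\Phi_\Gamma(\gamma))^{-L}.
\]
The last step uses $1+A+B^2+C\ge\frac12(1+A+B+C)$ and $2^{s_\lambda}d=A+B+C$. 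This spends the angular decay exactly once, which is what $N_2\ge 2L$ provides.

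Separately, Step 1 is only asserted and still needs to be carried out region by region. The region $|\xi|\approx 2^{s_\lambda}$ is controlled by the moment factor of the finer molecule. That factor gives essentially no angular decay there when $2^{s_\lambda}|\Delta\theta|^2\lesssim 2^{s_\gamma-s_\lambda}$, so in that regime the angular factor must be paid for by extra scale decay. This region-by-region envelope estimate is where the hypotheses on $M$ and $N_1$ genuinely enter.
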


\section{Introduction to shearlets} \label{shearletAppendix}

In this section, we first define feasible cone-adapted shearlet generators and then state classical results on nonlinear approximation rates for commonly used frames of compactly supported shearlets and bandlimited shearlets. These shearlets are essentially supported in frequency in  the horizontal and vertical cones defined by:
$$
\mathcal{P}_1 = \left\{ (\xi_1,\xi_2)\in\mathbb{R}^2 : |\xi_2|\leq |\xi_1| \right\},
\quad
\mathcal{P}_2 = \left\{ (\xi_1,\xi_2)\in\mathbb{R}^2 : |\xi_2|>|\xi_1| \right\}.
$$

The feasible generators or the so-called mother shearlets of a cone-adapted discrete shearlet system as in \Cref{shearletsInSpace} need to have sufficient decay conditions in frequency that allow their essential supports to be concentrated on coronae that tile the frequency plane (see \cite[Section 2.3.2]{Kutyniok2012-2}) so that the systems can form frames. We slightly modify the condition $\alpha>\gamma>3$ of \cite[Definition 2.6]{Kutyniok2012-2} on the decay parameters $\alpha$ and $\gamma$ for the purposes of the proof of \Cref{localCoherenceProp}.

\begin{definition} \label{coneAdaptedGenerators}
Fix parameters $\alpha > \gamma \geq \frac{3}{4}$ and $q>q'>0$, $q > r>0$. Let the dilation and shearing matrices be 
$$
A_{(1)} =
\begin{pmatrix}
2 & 0 \\
0 & \sqrt{2}
\end{pmatrix},
\quad
B_{(1)} =
\begin{pmatrix}
1 & 1 \\
0 & 1
\end{pmatrix},
\quad
A_{(2)} =
\begin{pmatrix}
\sqrt{2} & 0 \\
0 & 2
\end{pmatrix},
\quad
B_{(2)} =
\begin{pmatrix}
1 & 0 \\
1 & 1
\end{pmatrix}.
$$
For $\Phi, \psi^{(1)}, \psi^{(2)}, \psi^{(1, b)}, \psi^{(2, b)} \in L^2(\mathbb{R}^2)$ we define the following generators and associated shearlets.
\begin{enumerate}
\item The function $\Phi$ is a \emph{feasible coarse-scale generator} if $\widehat\Phi(0,0)=1$ and for all $\xi=(\xi_1,\xi_2)\in\mathbb R^2 \setminus \{(0, 0)\}$,
$$
|\widehat\Phi(\xi_1,\xi_2)|
\le
\min\{1,|r\xi_1|^{-\gamma}\}\,\min\{1,|r\xi_2|^{-\gamma}\}.
$$
The associated coarse-scale shearlets are
$$
(\widetilde\psi_{-1,k})^{\wedge}(\xi)=\widehat\Phi(\xi)\,e^{-2\pi i\langle \xi,k\rangle}.
$$

\item The functions $\psi^{(1)}$ and $\psi^{(2)}$ are  \emph{feasible horizontal and vertical generators} if $\widehat{\psi}^{(1)}(0,0)=\widehat{\psi}^{(2)}(0,0)=0$ and for all $\xi=(\xi_1,\xi_2)\in\mathbb R^2 \setminus \{(0, 0)\}$,
$$
|\widehat{\psi}^{(1)}(\xi_1,\xi_2)|
\le
\min\{1,|q\xi_1|^{\alpha}\}\,
\min\{1,|q'\xi_1|^{-\gamma}\}\,
\min\{1,|r\xi_2|^{-\gamma}\},
$$
$$
|\widehat{\psi}^{(2)}(\xi_1,\xi_2)|
\le
\min\{1,|q\xi_2|^{\alpha}\}\,
\min\{1,|q'\xi_2|^{-\gamma}\}\,
\min\{1,|r\xi_1|^{-\gamma}\}.
$$
For $j\ge 0$, $|\ell|< \lceil2^{j/2}\rceil$, $k\in\mathbb Z^2$, define interior cone-adapted shearlets as
$$
(\psi^{(1)}_{j,\ell,k})^{\wedge}(\xi)
=
2^{-\frac{3}{4}j}\,
\widehat{\psi}^{(1)}\bigl(\xi A_{(1)}^{-j}B_{(1)}^{-\ell}\bigr)\,
e^{-2\pi i\langle \xi A_{(1)}^{-j}B_{(1)}^{-\ell},\,k\rangle},
$$
$$
(\psi^{(2)}_{j,\ell,k})^{\wedge}(\xi)
=
2^{-\frac{3}{4}j}\,
\widehat{\psi}^{(2)}\bigl(\xi A_{(2)}^{-j}B_{(2)}^{-\ell}\bigr)\,
e^{-2\pi i\langle \xi A_{(2)}^{-j}B_{(2)}^{-\ell},\,k\rangle}.
$$

\item The functions $\psi^{(1,b)}$ and $\psi^{(2,b)}$ are \emph{feasible boundary generators} if $\widehat{\psi}^{(1,b)} (0,0) = \widehat{\psi}^{(2,b)}(0,0)=0$ and if they satisfy for all $\xi\in\mathbb R^2 \setminus \{(0,0)\}$,
$$
|\widehat{\psi}^{(1,b)}(\xi_1,\xi_2)|
\le
\min\{1,|q\xi_1|^{\alpha}\}\,
\min\{1,|q'\xi_1|^{-\gamma}\}\,
\min\{1,|r\xi_2|^{-\gamma}\},
$$
and
$$
|\widehat{\psi}^{(2,b)}(\xi_1,\xi_2)|
\le
\min\{1,|q\xi_2|^{\alpha}\}\,
\min\{1,|q'\xi_2|^{-\gamma}\}\,
\min\{1,|r\xi_1|^{-\gamma}\}.
$$
For $j\ge 0$ and $k\in\mathbb Z^2$, define the boundary shearlets by
$$
(\psi^{(1,b)}_{j,\lceil 2^{j/2} \rceil,k})^{\wedge}(\xi)
=
2^{-\frac{3}{4}j}\,
\widehat{\psi}^{(1,b)}\bigl(\xi A_{(1)}^{-j}B_{(1)}^{-\lceil 2^{j/2} \rceil}\bigr)\,
e^{-2\pi i\langle \xi A_{(1)}^{-j}B_{(1)}^{-\lceil 2^{j/2} \rceil},\,k\rangle},
$$
and
$$
(\psi^{(2,b)}_{j,-\lceil 2^{j/2} \rceil,k})^{\wedge}(\xi)
=
2^{-\frac{3}{4}j}\,
\widehat{\psi}^{(2,b)}\bigl(\xi A_{(2)}^{-j}B_{(2)}^{\lceil 2^{j/2} \rceil}\bigr)\,
e^{-2\pi i\langle \xi A_{(2)}^{-j}B_{(2)}^{\lceil 2^{j/2} \rceil},\,k\rangle}.
$$
\end{enumerate}
\end{definition}

There exist cone-adapted shearlet systems that are compactly supported in space \cite{Kutyniok2012-2} and compactly supported in frequency \cite{Guo2013}. The decay conditions we use in \Cref{coneAdaptedGenerators} as adapted from \cite[Definition 2.6]{Kutyniok2012-2} describe the frequency decay properties that are used to define suitable generators of compactly supported shearlets \cite{Kutyniok2012-2}. We do not reproduce a full construction of the corresponding shearlet systems here. Instead, we recall the two standard classes of cone-adapted shearlet frames which are compactly supported cone-adapted shearlet frames and bandlimited shearlet Parseval frames. It should be noted that the systems appearing in the following results are cone-adapted shearlet systems, but unlike \Cref{shearletsInSpace} they are defined in the frequency-side instead of space. Meaning, denoted by $\mathcal{SH}(c; \Phi, \psi^{(1)},  \psi^{(2)})$, the cone-adapted systems are obtained simply by taking the union of the coarse-scale shearlets and the interior and boundary shearlets corresponding to the horizontal and vertical cones.

\begin{theorem}[{\cite[Theorem 4.9]{Kutyniok2012-2}}] \label{thm:compact-shearlets}
There exist compactly supported generators $\Phi, \psi^{(1)}, \psi^{(2)} \in L^2(\mathbb{R}^2)$ and sampling vector $c \in (\mathbb{R}^+)^2$ such that the associated cone-adapted shearlet system $\mathcal{SH}(c; \Phi, \psi^{(1)}, \psi^{(2)})$ forms a frame for $L^2(\mathbb{R}^2)$.
\end{theorem}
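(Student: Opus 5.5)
The plan is the standard route for cone-adapted systems: build explicit generators of tensor-product type, check that they satisfy the frequency-decay conditions of \Cref{coneAdaptedGenerators} with strong parameters ($\alpha>\gamma>3$, as in \cite[Definition 2.6]{Kutyniok2012-2}), and then verify the frame inequality cone by cone with a Daubechies-type sufficient condition. The sampling vector $c$ is chosen small enough that the off-diagonal terms are dominated by the diagonal ones.

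\textbf{Construction of the generators.} Take a compactly supported one-dimensional wavelet $\psi_1$ with many vanishing moments, so that $|\hat\psi_1(\xi)|\lesssim \min\{1,|\xi|^{\alpha}\}\min\{1,|\xi|^{-\gamma}\}$. Take a compactly supported scaling function $\phi_1$ with $|\hat\phi_1(\xi)|\lesssim\min\{1,|\xi|^{-\gamma}\}$. One can, for instance, take $\psi_1$ a suitable modulation of a high-order B-spline and $\phi_1$ a B-spline. Then set
\[
\psi^{(1)}(x)=\psi_1(x_1)\phi_1(x_2),\quad \psi^{(2)}(x)=\psi^{(1)}(x_2,x_1),\quad \Phi(x)=\phi_1(x_1)\phi_1(x_2).
\]
These are compactly supported, and after rescaling by constants they satisfy the feasibility bounds of \Cref{coneAdaptedGenerators}.

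\textbf{Frame inequality on one cone.} Fix the horizontal cone and let $\Lambda$ be the set of admissible pairs $(j,\ell)$. For $f$ with $\hat f$ supported in $\mathcal P_1$, expand $\sum_{j,\ell,k}|\langle f,\psi^{(1)}_{j,\ell,k}\rangle|^2$ by Parseval and periodization over the lattice $M_{(c,1)}\mathbb Z^2$ (transformed by $A^jB^\ell$). This gives a diagonal term plus a remainder:
\[
\frac{1}{|\det M_c|}\int|\hat f(\xi)|^2\,\Gamma(\xi)\,d\xi \;\pm\; R(c)\,\|f\|^2,
\qquad \Gamma(\xi)=\sum_{(j,\ell)\in\Lambda}|\hat\psi^{(1)}(\xi A_{(1)}^{-j}B_{(1)}^{-\ell})|^2 .
\]
Here $R(c)$ is given by Cauchy--Schwarz as
\[
R(c)=\sum_{m\neq0}\bigl(\Gamma_0(c^{-1}m)\,\Gamma_0(-c^{-1}m)\bigr)^{1/2},
\qquad
\Gamma_0(\omega)=\operatorname*{ess\,sup}_{\xi}\sum_{j,\ell}|\hat\psi^{(1)}(\xi A^{-j}B^{-\ell})|\,|\hat\psi^{(1)}(\xi A^{-j}B^{-\ell}+\omega)| .
\]

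\textbf{Bounds on $\Gamma$.} The upper bound on $\Gamma$ follows from the decay exponents. The factor $\min\{1,|q\xi_1|^\alpha\}\min\{1,|q'\xi_1|^{-\gamma}\}$ makes the sum over $j$ geometric. For fixed $j$, the shears move $\xi_2/\xi_1$ in steps of $2^{-j/2}$, and the factor $|r\xi_2|^{-\gamma}$ makes the sum over $\ell$ converge uniformly.

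The lower bound $\operatorname{ess\,inf}_{\mathcal P_1}\Gamma>0$ requires choosing $\psi_1,\phi_1$ so that $\hat\psi^{(1)}$ has no zeros on a fixed wedge $\{1\le|\xi_1|\le 2,\ |\xi_2/\xi_1|\le 1\}$. This can be arranged by a suitable modulation and by checking non-vanishing on compact sets. The parabolic dilations and shears of this wedge then cover the cone.

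The vertical cone is symmetric. The low-frequency part, together with gluing the cones, is handled by $\Phi$, whose Fourier transform is nonzero near the origin, plus the tail decay.

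\textbf{Main obstacle.} The hardest step is to show $R(c)\to0$ as $c\to0$, uniformly over the infinitely many shears. This needs a careful estimate of $\Gamma_0(\omega)$ that decays like $(1+|\omega_1|)^{-\gamma'}(1+|\omega_2|)^{-\gamma'}$ with $\gamma'>1$, which is where the strong condition $\gamma>3$ is used.

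I would first split the $\ell$-sum according to whether the shear is adapted to $\xi$. For non-adapted shears, the $\xi_2$-decay gives summability. For adapted shears, only $O(1)$ terms survive, and one then uses the product $|\hat\psi(\eta)||\hat\psi(\eta+\omega)|\lesssim (1+|\omega|)^{-\gamma}$. Once $R(c)<\operatorname{ess\,inf}\Gamma$, the lower frame bound is
\[
A\ \ge\ \frac{\operatorname{ess\,inf}\Gamma-R(c)}{|\det M_c|}\ >\ 0,
\]
and the upper bound is $\bigl(\operatorname{ess\,sup}\Gamma+R(c)\bigr)/|\det M_c|$.
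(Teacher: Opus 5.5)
The paper does not prove this statement; it quotes it from the literature. Your plan is essentially the argument used in the source, which goes back to Kittipoom, Kutyniok and Lim: separable generators $\psi_1(x_1)\phi_1(x_2)$ with decay exponents $\alpha>\gamma>3$ (the source uses Daubechies-type filters rather than modulated B-splines, which is immaterial), the Daubechies-type periodization estimate with remainder $R(c)=\sum_{m\neq0}(\Gamma_0(c^{-1}m)\Gamma_0(-c^{-1}m))^{1/2}$, the key estimate $R(c)\to0$ as $c\to0$ uniformly over shears, and a positive essential infimum of the diagonal symbol.

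The one thing to fix is the cone-by-cone framing. Compactly supported shearlets are not bandlimited to a cone, so frame inequalities proved only for $\hat f$ supported in $\mathcal P_1$ cannot simply be glued, and $\Gamma$ alone has essential infimum zero on $\mathcal P_1$ near the origin. Instead, apply the periodization identity to arbitrary $f\in L^2(\mathbb R^2)$ and to all three subsystems at once. Then require $\operatorname{ess\,inf}_{\xi\in\mathbb R^2}\bigl(|\hat\Phi(\xi)|^2+\Gamma_1(\xi)+\Gamma_2(\xi)\bigr)$ to exceed the sum of the three remainders, where $\Gamma_1,\Gamma_2$ are the horizontal and vertical versions of your $\Gamma$. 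This is how the source theorem is organized.
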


Such frames of compactly supported shearlets are useful when spatial localization is important, but they are not necessarily tight frames. This limitation leads us to the next well-known system of shearlets.

\begin{theorem}[{\cite[Theorem 5]{Guo2013}}]\label{thm:bandLimitedFrame}
There exists a cone-adapted regular discrete system of bandlimited shearlets that forms a Parseval frame for $L^2(\mathbb{R}^2)$. Moreover, the elements of this system are $C^\infty$.
\end{theorem}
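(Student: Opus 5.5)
The plan is to construct the system explicitly on the frequency side, following the construction of Guo and Labate. The frame identity will come from a smooth partition of unity of $\widehat{\mathbb R}^2$ in which every tile is compactly supported. Parseval's identity on each tile will then turn the translation sums into integrals.

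\emph{Step 1 (radial/scale partition).} I choose $\widehat\Phi\in C_c^\infty(\R^2)$ with $\widehat\Phi=1$ near the origin, and set $W_j(\xi)^2=|\widehat\Phi(2^{-j-1}\xi)|^2-|\widehat\Phi(2^{-j}\xi)|^2$. Then $|\widehat\Phi(\xi)|^2+\sum_{j\ge0}W_j(\xi)^2=1$, and each $W_j$ is smooth and supported in a dyadic corona of size $\approx 2^{j}$. I will use squares of cutoffs defined via $\max(|\xi_1|,|\xi_2|)$, so that the coronae are square annuli adapted to the cones $\mathcal P_1,\mathcal P_2$.

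\emph{Step 2 (angular partition).} I choose $V\in C^\infty$ supported in $[-1,1]$ with $\sum_{\ell\in\Z}|V(u-\ell)|^2=1$. On the horizontal cone I set
\[
\widehat{\psi}^{(1)}_{j,\ell}(\xi)=W_j(\xi)\,V\!\left(2^{j/2}\tfrac{\xi_2}{\xi_1}-\ell\right),
\]
and define the vertical cone analogously with the roles of $\xi_1,\xi_2$ swapped. For $|\xi_2/\xi_1|\le1$ this gives $\sum_{|\ell|\le 2^{j/2}}|\widehat{\psi}^{(1)}_{j,\ell}|^2=W_j^2$, up to the terms straddling the seam. This matches the dilation $A_{(1)}$ and the shear $B_{(1)}^{\ell}$ of \Cref{coneAdaptedGenerators}.

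\emph{Step 3 (boundary shearlets).} This is the main obstacle. Truncating the $V$-terms with $\ell=\pm\lceil 2^{j/2}\rceil$ at $|\xi_1|=|\xi_2|$ destroys smoothness. The fix is to glue the horizontal piece $\chi_{\mathcal P_1}$ and the vertical piece $\chi_{\mathcal P_2}$ of each seam term into a single function. Along the diagonal the two angular variables $\xi_2/\xi_1$ and $\xi_1/\xi_2$ agree to first order, and all derivatives of $V$ match by the symmetry of the construction, so the glued function is $C^\infty$. Checking this matching of derivatives, and keeping the glued support compact, is the delicate point. If the naive gluing fails to be smooth, I would first try choosing $V$ even, with vanishing derivatives at $\pm1$, so that the two halves can be matched.

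\emph{Step 4 (translations and the Parseval identity).} Each tile $T$, namely $\widehat\Phi$, an interior $\widehat\psi_{j,\ell}$ or a glued boundary function, is supported in a parallelogram. After applying $B_{(1)}^{-\ell}A_{(1)}^{-j}$ (respectively its sheared version for boundary tiles), this parallelogram fits inside a fixed fundamental domain $Q$ of $\Z^2$. For $f\in L^2$ I then expand $\widehat f\,\overline{T}$ in the Fourier series on that domain, using the exponentials $e^{-2\pi i\langle\xi A^{-j}B^{-\ell},k\rangle}$ weighted by the normalization $2^{-3j/4}$. Parseval's identity gives
\[
\sum_k|\langle f,\psi_{T,k}\rangle|^2=\int|\widehat f|^2|T|^2.
\]
Summing over all tiles and using Steps 1--3 yields $\|\widehat f\|^2=\|f\|^2$. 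The remaining work is to verify that the support of each tile fits in one period of its lattice. For the boundary tiles, whose supports are sheared, I would use the sheared lattice; possibly an extra constant in the corona widths is needed.

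Smoothness in space is immediate, since every generator has $C_c^\infty$ Fourier transform. Smoothness in frequency is built into the cutoffs, and the glued boundary functions are smooth by Step 3.
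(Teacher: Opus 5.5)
Your strategy is Guo--Labate's, whose result the paper only cites: a smooth frequency partition of unity into coarse-scale, interior and glued boundary tiles, followed by Fourier series on a period cell. But Step~3, which is the whole point of the smoothness claim, is not established, and the reason you give for it is wrong.

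On the diagonal, $\xi_2/\xi_1$ and $\xi_1/\xi_2$ agree only to zeroth order; their derivatives across it are opposite. Work in $s=\log(\xi_2/\xi_1)$. There the glued angular factor is $G(-|s|)$ with $G(s)=V(2^{j/2}e^{s}-\ell)$. This is $C^\infty$ if and only if all odd derivatives of $G$ vanish at $s=0$, because reflection symmetry only matches the even-order ones.

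What makes the gluing work is two facts together. The seam must sit at a node, $2^{j/2}=\ell$. And $V$ must be flat there: $V(0)=1$ and $V^{(n)}(0)=0$ for $n\geq1$. Then both halves equal $1$ plus a flat function. Guo--Labate build this flatness into their choice of $V$. It also follows from your Step~2 if $V\geq0$: $V$ is flat at $\pm1$ because it is smooth with $\supp V\subseteq[-1,1]$, so $\sum_\ell|V(u-\ell)|^2=1$ forces $|V|^2-1$ to be flat at $0$. Your fallback points at this but does not make the transfer from $\pm1$ to $0$. Evenness plays no role; by itself it only matches derivatives up to order two.

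More seriously, the gluing breaks down at odd scales, because you keep $A_{(1)}=\mathrm{diag}(2,\sqrt2)$ with integer shears. For odd $j$, $2^{j/2}$ is irrational, so the diagonal lies at the non-node $2^{j/2}-\lceil 2^{j/2}\rceil\in(-1,0)$ and no tile is centred on it. The interior tile $\ell=\lceil 2^{j/2}\rceil-1$ also crosses the diagonal. Cutting it at $\partial\mathcal P_1$ destroys smoothness, and not cutting it breaks the partition of unity in $\mathcal P_2$. Gluing $V(2^{j/2}u-\ell)$ to $V(2^{j/2}/u-\ell)$ would need $V'$, and further odd-order combinations, to vanish at infinitely many non-integer points $2^{j/2}-\ell$. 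That fails for any $V$ with $V'\neq0$ on $(-1,0)\cup(0,1)$.

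Guo--Labate avoid this by using $\mathrm{diag}(4,2)$ with $|\ell|\leq 2^j$, i.e.\ only even $j$ in your indexing. Then $\ell=\pm2^j$ centres the boundary tile exactly on $|\xi_1|=|\xi_2|$. Since the statement is a pure existence claim, you may do the same.

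Two smaller repairs are also needed.
\begin{itemize}
\item A cutoff that is a function of $\max(|\xi_1|,|\xi_2|)$ has a gradient jump across the diagonals wherever its profile has nonzero derivative, so your $W_j$ would already be non-smooth on the seam. Use a tensor product $\hat\varphi(\xi_1)\hat\varphi(\xi_2)$, as Guo--Labate do, or a radial cutoff.
\item In Step~4, both halves of a glued tile must carry the same exponentials, e.g.\ $e^{-2\pi i\langle\xi A_{(1)}^{-j}B_{(1)}^{-\ell},k\rangle}$ on the vertical half as well; otherwise the element is not smooth. You then check from the support bounds on $W_j$ and $V$ that the vertical half, mapped by $\xi\mapsto\xi A_{(1)}^{-j}B_{(1)}^{-\ell}$, lands in the same period cell.
\end{itemize}
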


\Cref{thm:bandLimitedFrame} provides a Parseval frame example that is used when tightness of the shearlet sparsifying frame is needed. We now state the corresponding nonlinear approximation results for compactly supported and bandlimited shearlets precisely. First, as in \cite{Donoho1999, CandesDonoho2004} we define $STAR^2(A)$ as a class of star-shaped sets with $C^2$-boundaries. 

\begin{definition}
     \begin{itemize}
        \item Let $0 \leq \rho_0 < 1$ and  $\rho \colon \R \to [0,\rho_0]$ be of class $C^2$ and $2\pi$-periodic. Define
         \begin{equation}
           B=\{(\rho\cos\theta,\rho\sin\theta)\in\R^2: \theta\in [0,2\pi],  0\le\rho \le \rho(\theta)\}. \label{eq:star1}
         \end{equation}
         We assume that
        \begin{equation}
        \label{eq:star2}
        \sup |\rho''(\theta)| \le A
        \end{equation}
        for some $A>0$.
        We say that a set $B$ belongs to $STAR^2(A)$ if $B \subseteq  [0,1]^2$ and $B$ is a translate of a set satisfying \eqref{eq:star1} and \eqref{eq:star2}.
    
        \item We denote  the collection of twice differentiable functions supported inside $[0,1]^2$ by $C_0^2([0,1]^2)$. We define the set $\mathcal{E}^2(A)$ of \emph{cartoon-like images} as the collection of functions of the form
        $$
        f = f_0 + f_1 \chi_B,
        $$
        where $f_0,f_1 \in C_0^2([0,1]^2)$, $B \in STAR^2(A)$ and $\|f_0\|_{C^2} ,\|f_1\|_{C^2} \le 1$.
    \end{itemize}
    
\end{definition}

\begin{definition}\label{def:nonlinear-approx}
    For a frame $\{\psi_i\}_{i \in I}$, with its canonical dual frame denoted by $\{\widetilde{\psi}_i\}_{i \in I}$, let $f_N$ be a (not necessarily unique) \emph{$N$-term approximation} of $f$ obtained from the $N$ largest coefficients of its frame expansion, namely,
    $$f_N = \sum_{i \in I_N} \langle f, \psi_i \rangle \widetilde{\psi}_i,$$
    where $I_N \subseteq I$ is the set of indices corresponding to the $N$ largest entries of the coefficient sequence $\{|\langle f, \psi_i \rangle| : i \in I\}$.
\end{definition}

\begin{theorem}[{\cite[Theorem 1.2]{Guo2007}}]
Let $\mathcal{SH}(c; \Phi, \psi^{(1)}, \psi^{(2)})$ be the Parseval frame of bandlimited shearlets as defined in \cite[Section 1.2]{Guo2007}. Let $f \in \mathcal{E}^2(A)$ and $f_N^S$ be the $N$-term shearlet approximation of $f$. Then there exists some $C > 0$ independent of $N$ such that
$$\|f - f_N^S\|_{L^2(\mathbb{R}^2)}^2 \leq C N^{-2} (\log N)^3. $$
\end{theorem}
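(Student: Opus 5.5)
The statement is the classical result of \cite[Theorem 1.2]{Guo2007}, so the plan follows the standard Guo--Labate argument rather than anything new in this paper. Since the frame is Parseval, the canonical dual coincides with the frame itself. Hence $f - f_N^S = \sum_{i\notin I_N}\langle f,\psi_i\rangle\psi_i$, and the synthesis operator of a Parseval frame is a contraction. This gives
\[
\|f-f_N^S\|_{L^2}^2 \le \sum_{i\notin I_N}|\langle f,\psi_i\rangle|^2 = \sum_{n>N}|\theta|_{(n)}^2,
\]
where $|\theta|_{(n)}$ is the $n$-th largest shearlet coefficient in absolute value. It therefore suffices to prove the decay estimate $|\theta|_{(n)} \lesssim n^{-3/2}(\log n)^{3/2}$. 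Summing the squares over $n>N$ then yields $N^{-2}(\log N)^3$.

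To prove the decay estimate, I would localize $f$ at each scale $j$. Take a smooth partition of unity $\{w_Q\}$ subordinate to dyadic squares $Q$ of side $2^{-j/2}$. I would split these squares into two classes: the ``smooth'' squares, which do not meet $\partial B$ (about $2^{j}$ of them), and the ``edge'' squares, which meet $\partial B$ (about $2^{j/2}$ of them, since the boundary has finite length). The coarse-scale coefficients are harmless. For the other coefficients, the strategy is to count, for each $\varepsilon>0$, how many coefficients at scale $j$ exceed $\varepsilon$. Summing this count over $j$ gives a weak-$\ell^{2/3}$ type bound $\#\{i: |\langle f,\psi_i\rangle|>\varepsilon\}\lesssim \varepsilon^{-2/3}|\log\varepsilon|^{\cdots}$, which is equivalent to the rearrangement decay above.

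For the smooth squares, $f w_Q\in C^2$. Using the frequency decay of the bandlimited generators together with the vanishing moments encoded in $\widehat\psi^{(1)}$ near $\xi_1=0$, I would show that these coefficients decay like $2^{-3j/2}$ at scale $j$. This is more than enough, because their $\ell^2$ mass at scale $j$ is $O(2^{-2j})$.

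For the edge squares, I would flatten the boundary locally, approximating it by its tangent line. I would then compare the coefficients with those of a straight edge, using the $C^2$ curvature control \eqref{eq:star2}. This is where the main obstacle lies: one needs to show that for each edge square only about $O(1)$ shear directions $\ell$, namely those aligned with the tangent, carry coefficients of size $2^{-3j/4}$. The coefficients at the other shears must decay like $2^{-3j/4}(1+|\ell-\ell_0|)^{-5/2}$, uniformly in the translation $k$ measured relative to the edge. I would first try to get this decay by integration by parts along the radial frequency direction in polar coordinates, exploiting the smoothness of $\widehat\psi$ and the Fourier decay $|\xi|^{-3/2}$ of the edge fragment away from its normal direction. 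Once these per-square estimates are in place, the counting is routine. At level $\varepsilon$ and scale $j$, the number of large coefficients is at most about $2^{j/2}$ times a polynomial factor in $j$. The only relevant scales satisfy $2^{-3j/4}\gtrsim\varepsilon$, i.e.\ $j\lesssim \tfrac43\log_2\varepsilon^{-1}$. Summing over these scales gives the weak-type bound with the logarithmic factors, and hence the claimed $CN^{-2}(\log N)^3$.
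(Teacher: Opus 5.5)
Your outer architecture is the Guo--Labate argument that the paper simply cites, and several steps are fine: the Parseval reduction $\|f-f_N^S\|_2^2\le\sum_{n>N}|\theta|_{(n)}^2$, the split into smooth and edge squares, and the cut-off $|\langle f,\psi_{j,\ell,k}\rangle|\lesssim 2^{-3j/4}$. The gap is in the counting.

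You assert that at scale $j$ and any level $\varepsilon\lesssim 2^{-3j/4}$ only about $2^{j/2}\,\mathrm{poly}(j)$ coefficients exceed $\varepsilon$. You then sum a geometric series up to $j\approx\frac43\log_2\varepsilon^{-1}$. For $\varepsilon\ll 2^{-3j/4}$, which is the situation at all but the last few scales of your sum, this claim is false. Already the smooth part violates it: take a lacunary $C^2$ term with amplitude $\approx j^{-2}2^{-2j}$ at frequency $\approx 2^{j}$ on the $\xi_1$-axis, times a fixed bump. At scale $j$ it produces about $2^{3j/2}$ coefficients (one per translate in the bump) of size $\approx j^{-2}2^{-11j/4}$.

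What is true, and what Guo--Labate prove, is $R(j,\varepsilon):=\#\{(\ell,k):|\langle f,\psi_{j,\ell,k}\rangle|>\varepsilon\}\lesssim\varepsilon^{-2/3}$ \emph{uniformly in $j$}. The logarithm then comes from the $O(\log\varepsilon^{-1})$ scales with $R(j,\varepsilon)\neq 0$. This gives $\varepsilon^{-2/3}\log\varepsilon^{-1}$, i.e. exactly one log, hence $(\log N)^3$. An unspecified polynomial in $j$ would not control that power.

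Proving $R(j,\varepsilon)\lesssim\varepsilon^{-2/3}$ requires per-square bounds in $\ell^{2/3}$-type quasi-norms.
\begin{itemize}
\item On each of the $\sim 2^{j/2}$ edge squares, the coefficients of $fw_Q$ need weak-$\ell^{2/3}$ quasi-norm $\lesssim 2^{-3j/4}$.
\item On each of the $\sim 2^{j}$ smooth squares, they need $\ell^{2/3}$ norm $\lesssim 2^{-3j/2}$. More crudely, total energy $\lesssim 2^{-4j}$ at scale $j$ on $\lesssim 2^{2j}$ effective indices also suffices, since $\min\{2^{2j},\varepsilon^{-2}2^{-4j}\}\le\varepsilon^{-2/3}$. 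So your ``$\ell^2$ mass $O(2^{-2j})$'' must be the norm, not its square: energy $2^{-2j}$ only gives $\varepsilon^{-1}$.
\end{itemize}

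Your edge estimate does not deliver the weak-$\ell^{2/3}$ bound. A shear misaligned by $m=\ell-\ell_0$ meets the edge fragment in $Q$ at about $1+|m|$ translates. So a pointwise bound $2^{-3j/4}(1+|m|)^{-5/2}$, even with fast decay in $k$ away from the edge, only yields $\lesssim(2^{-3j/4}/\varepsilon)^{4/5}$ coefficients above $\varepsilon$ per square. Summed over squares and scales, this gives $\#\{|\theta|>\varepsilon\}\lesssim\varepsilon^{-4/5}$, hence only $\|f-f_N^S\|_2^2\lesssim N^{-3/2}$.

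Your exponent $5/2$ is the square root of the Guo--Labate wedge-energy estimate $\int_{W_{j,\ell}}|\widehat{fw_Q}|^2\lesssim 2^{-3j/2}(1+|m|)^{-5}$, which is an $\ell^2$-in-$k$ statement. The missing ingredient is to pair it with localization of the translation index to $\lesssim 1+|m|$ effective positions. Guo--Labate obtain this by integrating by parts in frequency with a differential operator adapted to the wedge. Then shear $m$ contributes at most $\min\{1+|m|,\ \varepsilon^{-2}2^{-3j/2}(1+|m|)^{-5}\}$ coefficients above $\varepsilon$. Summing over $m$ gives $(2^{-3j/4}/\varepsilon)^{2/3}$ per edge square, i.e. $\varepsilon^{-2/3}$ per scale, as required.
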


We also recall that on cartoon-like images, compactly supported shearlet frames have the same near-optimal nonlinear approximation rate as bandlimited ones. 

\begin{theorem}[{\cite[Theorem 1.4]{Kutyniok2011}}]
Let $\mathcal{SH}(c;\Phi,\psi^{(1)},\psi^{(2)})$ be a cone-adapted discrete shearlet frame for $L^2(\mathbb{R}^2)$ with compactly supported generators such that its generators satisfy suitable moment and frequency decay conditions. Let $f \in \mathcal{E}^2(A)$ and $f_N^S$ be an $N$-term shearlet approximation of $f$. Then there exists a constant $C > 0$,  such that
$$\|f - f_N^S\|_{L^2(\mathbb{R}^2)}^2 \leq C N^{-2}(\log N)^3.$$
\end{theorem}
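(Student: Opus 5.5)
This result is quoted from \cite[Theorem 1.4]{Kutyniok2011}. If I were to prove it myself, I would follow the standard two-stage scheme used for curvelets \cite{CandesDonoho2004} and bandlimited shearlets \cite{Guo2007}. First I reduce the approximation error to a tail of the coefficient sequence, and then I prove a weak-$\ell^{2/3}$ type decay of the shearlet coefficients, up to logarithms.

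\textbf{Reduction to coefficient decay.} Since the system is only a frame, not a tight frame, I write
\[
f-f_N^S=\sum_{i\notin I_N}\langle f,\psi_i\rangle\widetilde\psi_i .
\]
The canonical dual frame has upper frame bound $1/A$, where $A$ is the lower frame bound of the shearlet frame. Hence
\[
\|f-f_N^S\|_2^2\le A^{-1}\sum_{n>N}|c|_{(n)}^2,
\]
where $|c|_{(n)}$ denotes the $n$-th largest coefficient in absolute value. It therefore suffices to prove
\[
|c|_{(n)}\lesssim n^{-3/2}(\log n)^{3/2}.
\]
Summing this tail gives $\|f-f_N^S\|_2^2\lesssim N^{-2}(\log N)^3$. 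Equivalently, I need a counting estimate of the form
\[
\#\{i:|\langle f,\psi_i\rangle|>\varepsilon\}\lesssim \varepsilon^{-2/3}\log(\varepsilon^{-1}).
\]

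\textbf{Localization and case split.} At scale $j$, I cover $[0,1]^2$ by dyadic squares $Q$ of side $2^{-j/2}$ and use a smooth partition of unity subordinate to them. Compact support of the generators lets me handle each shearlet through only the $O(1)$ squares its support meets. I then split the squares into two groups.

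\emph{Squares away from $\partial B$.} On these, $f$ is locally $C^2$. The vanishing moments and the frequency decay of $\psi^{(1)},\psi^{(2)}$ (condition $\min\{1,|q\xi_1|^\alpha\}$) give
\[
|\langle f,\psi_{j,\ell,k}\rangle|\lesssim 2^{-9j/4}.
\]
There are $O(2^{2j})$ such coefficients per scale, so this part contributes only a harmless amount to the counting estimate.

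\emph{Squares meeting $\partial B$.} There are $O(2^{j/2})$ such squares. Inside each one, I replace the edge by its tangent line, since the $C^2$ bound on $\rho$ controls the deviation by $O(2^{-j})$. For the resulting linear-edge model, the Fourier transform is concentrated along the line normal to the edge. The frequency decay in $\xi_2$ then shows that a shearlet whose orientation differs from the edge slope $s_0$ by $|\ell-2^{j/2}s_0|$ shear steps has coefficient of size
\[
\lesssim 2^{-3j/4}(1+|\ell-2^{j/2}s_0|)^{-\beta}
\]
for some $\beta>1$. Translations $k$ far from the edge are suppressed by spatial decay.

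\textbf{Counting, and the main obstacle.} Summing these bounds over the $2^{j/2}$ boundary squares, the shears, and the translations shows that at scale $j$ there are essentially $O(2^{j/2})$ coefficients of size about $2^{-3j/4}$. With $\varepsilon\approx 2^{-3j/4}$ this gives $N\approx 2^{j/2}\approx\varepsilon^{-2/3}$, and summing over scales yields the logarithmic loss. I expect the main difficulty to be the curvature remainder: I must show that replacing the $C^2$ edge by its tangent line costs a term decaying fast enough in $j$ and in the shear mismatch so as not to destroy the weak-$\ell^{2/3}$ bound. I would handle it by a Taylor expansion of $\rho$ inside each square, followed by integration by parts in the direction along the edge, using the regularity assumed on the compactly supported generators.
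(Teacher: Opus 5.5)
The paper gives no proof of this statement; it only quotes \cite{Kutyniok2011}. Your overall architecture matches the cited argument: the dual-frame reduction to $A^{-1}\sum_{n>N}|c|_{(n)}^2$, the weak-$\ell^{2/3}$ counting target, squares of side $2^{-j/2}$, the smooth/edge split, and linearization of the edge. However, two of your quantitative claims do not deliver $N^{-2}(\log N)^3$.

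\textbf{Smooth part.} With the bound you state, this part is not harmless. With $O(2^{2j})$ coefficients at scale $j$, each of size $\lesssim 2^{-9j/4}$, you get $\#\{|c|>\varepsilon\}\approx\sum_{j:\,2^{-9j/4}>\varepsilon}2^{2j}\approx\varepsilon^{-8/9}$. This is much larger than $\varepsilon^{-2/3}\log(1/\varepsilon)$. Equivalently, the implied energy per scale is $2^{-5j/2}$, and the tail is only $N^{-5/4}$. A pointwise route would need $2^{-3j}$.

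The working argument is not pointwise. If $\supp\psi_\lambda\cap\partial B=\emptyset$, then $\langle f,\psi_\lambda\rangle=\langle g,\psi_\lambda\rangle$ with $g\in\{f_0,f_0+f_1\}$, which is globally $C^2$. Do not use the windows $w_Q$ here, since their second derivatives are of size $2^j$. The vanishing-moment factor $\min\{1,|q\xi_1|^\alpha\}$ gives the per-scale energy bound $\sum_{\lambda\text{ at scale }j}|\langle g,\psi_\lambda\rangle|^2\lesssim 2^{-4j}$. Hence $\#\{|c|>\varepsilon\}\lesssim\sum_j\min\{2^{2j},2^{-4j}\varepsilon^{-2}\}\approx\varepsilon^{-2/3}$.

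\textbf{Edge part.} This is the more serious problem: $\beta>1$ is far from enough. Inside a boundary square, at mismatch $t=|\ell-2^{j/2}s_0|$, the edge meets about $1+t$ translates, not $O(1)$. The reason is that the relative horizontal drift between the edge and the shearlet supports across the square is about $t\,2^{-j}$. In total, roughly $2^{3j/2}$ shearlets at scale $j$ meet $\partial B$, and your decay bound must compress them to ``essentially $2^{j/2}$''.

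With the bound $2^{-3j/4}(1+t)^{-\beta}$, one square contributes about $T_j^2$ coefficients above $\varepsilon$, where $T_j=\min\{(2^{-3j/4}/\varepsilon)^{1/\beta},2^{j/2}\}$. Summing over the $2^{j/2}$ squares and over scales gives $\varepsilon^{-2/3}\log(1/\varepsilon)$ for $\beta=3$. For $\beta<3$ it gives only $\varepsilon^{-6/(3+2\beta)}$, for example $\varepsilon^{-6/5}$ when $\beta=1$. So you need decay of order $(1+t)^{-3}$. This is the source of the factor $(\log N)^3$, and it is consistent with the original feasibility condition $\alpha>\gamma>3$ recalled just before \Cref{coneAdaptedGenerators}.

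\textbf{Curvature remainder.} The same bookkeeping shows how to close the step you left open. Linearize inside each shearlet's support, not once per square: at the square level the deviation $O(2^{-j})$ is as large as the shearlet width.
\begin{itemize}
\item The edge runs through the support over a length of about $2^{-j/2}/(1+t)$.
\item Over that length it deviates from its tangent by $\lesssim 2^{-j}(1+t)^{-2}$.
\item So the region between curve and tangent has area $\lesssim 2^{-3j/2}(1+t)^{-3}$.
\item Since $\|\psi_\lambda\|_\infty\sim 2^{3j/4}$, this region contributes $\lesssim 2^{-3j/4}(1+t)^{-3}$.
\end{itemize}
That is exactly the borderline rate. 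No integration by parts along the edge is needed, but any bound weaker than $t^{-3}$, whether for this remainder or for the linear-edge term, would break the result.
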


\section{Lower frame bounds for finite frames of one-dimensional wavelets}

In this section, we present some known results on lower frame bounds for a finite wavelet frame.

\begin{definition}
A sequence $\{a_j\}_j$ of real numbers is separated by $\delta > 0$ if
$$|a_j - a_k| \geq \delta \quad \text{for} \quad j \neq k.$$
If $\{a_j\}_j$ is a sequence of positive real numbers, it is called logarithmically separated by $a > 1$ if the sequence $\{\log a_j\}_j$ is separated by $\log a$.
\end{definition}

\begin{remark}
    For convenience, the authors of the following two results assume that the finite wavelet system is parameterized on an ``irregular lattice'' \cite[Section 2]{Christensen2001Wavelet}, as the results are formulated for the scale and translation parameters $\{(a_j, \lambda_n)\}_{j=1,n=1}^{j_0,N}$ and
    $$\psi_{j,n} = \frac{1}{a_j^{1/2}} \psi\bigg(\frac{x}{a_j} - \lambda_n\bigg), \quad j=1,\dots, j_0, \quad n = 1,\dots, N.$$ 
    
    The case of arbitrary parameters $\{(a_\gamma, \lambda_\gamma)\}_{\gamma \in \Gamma}$ can be treated by extending $\{(a_\gamma, \lambda_\gamma)\}_{\gamma \in \Gamma}$ to an irregular lattice, after which the estimates can be used directly. 
\end{remark}

\begin{theorem}[{\cite[Theorem 2.1]{Christensen2001Wavelet}}]
Suppose that $\{a_j\}_{j=1}^{j_0} \subseteq \mathbb{R}_+$ is logarithmically separated by $a > 1$ and that $\displaystyle\sup_{j,k = 1,\ldots,j_0} \frac{a_j}{a_k} \leq K$ for some $K > 1$ and let $\{\lambda_n\}_{n=1}^N \subseteq \mathbb{R}$ be separated. Let $\psi \in L^2(\mathbb{R})$ and suppose there is a positive number $c$ and a non-degenerate interval $I \subseteq [c, \infty)$ such that for any positive number $r$ there are numbers $d_1(r) > 0$, $d_2(r) \geq 0$ and $s(r) > -c$ such that
$$
|\hat{\psi}(x)| \geq d_1(r) \quad \forall x \in I + s(r),
$$
$$
|\hat{\psi}(x)| \leq d_2(r) \quad \forall x > (c + s(r))a,
$$
$$
\frac{d_2(r)}{d_1(r)} \leq r.
$$

Let $A$ be a lower frame bound for $\{e^{-2\pi i \lambda_n x}\}_{n=1}^N$ in $L^2(I)$ and $B$ be an upper bound for it in $L^2(KI)$. Denote by $B'$ an upper bound for $\{e^{-2\pi i \lambda_n x} \hat{\psi}(x)\}_{n=1}^N$ in $L^2(\mathbb{R})$. Let $A_1, \ldots, A_{j_0}$ be a sequence of numbers satisfying
$$
0 < A_1 \leq d_1^2(r)A \text{ for some } r > 0,
$$
$$
0 < A_k \leq \frac{d_1^2(r) AA_{k-1}}{16(\sqrt{r_k} + 1)^2B'} \text{ for some } r_k \in \bigg(0, \frac{AA_{k-1}}{16BB'(k-1)}\bigg], \quad (k \geq 2).
$$
Then $\{\psi_{j,n}\}_{j=1,n=1}^{j_0,N}$ is linearly independent in $L^2(\mathbb{R})$ with lower frame bound $A_{j_0}$.
\end{theorem}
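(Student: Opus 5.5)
The statement is a cited result, \cite[Theorem 2.1]{Christensen2001Wavelet}. I would prove it by induction on the number of scales $j_0$. At each step I add the next scale and show that the lower frame bound degrades by at most a controlled factor.

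\textbf{Base case.} Work on the Fourier side, and order the scales so that $a_1<a_2<\dots<a_{j_0}$. For a single scale, take $f=\sum_n c_n \psi_{1,n}$. Up to dilation,
$$\hat f(\xi)=\hat\psi(a_1\xi)\sum_n c_n e^{-2\pi i\lambda_n a_1\xi},$$
with the appropriate normalisation. Restrict $\|\hat f\|_2^2$ to the set where $a_1\xi\in I+s(r)$; there $|\hat\psi|\ge d_1(r)$. After a change of variables and a translation (which leaves the exponential system's frame bound unchanged), the lower frame bound $A$ of $\{e^{-2\pi i\lambda_n x}\}$ on $L^2(I)$ gives
$$\|f\|^2\ge d_1^2(r)A\sum|c_n|^2.$$
Working with the synthesis operator, this yields a Riesz-type lower bound $A_1$, and hence also a lower frame bound on the span. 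I would formulate the induction hypothesis as a lower Riesz bound, since that also gives linear independence.

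\textbf{Inductive step.} Suppose the system with $k-1$ scales has lower Riesz bound $A_{k-1}$. Write $f=g+h$, where $g$ is the new scale-$a_k$ component and $h$ collects the older ones. Choose a frequency band $J_k$ where the new generator is large, $|\hat\psi(a_k\cdot)|\ge d_1(r_k)$, while every older dilate is at most $d_2(r_k)$; the log-separation $a_k/a_{k-1}\ge a$ and the hypothesis $|\hat\psi(x)|\le d_2$ for $x>(c+s)a$ make this possible.

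On $J_k$ we have $\|\hat g\|_{L^2(J_k)}^2\ge d_1^2 A\|c^{(k)}\|^2$ from the exponential frame bound. For the older components, the upper bound $B$ on $L^2(KI)$ (this uses the scale ratio bound $K$) gives
$$\|\hat h\|_{L^2(J_k)}\le \sqrt{(k-1)B}\,d_2\,\|c^{\mathrm{old}}\|,$$
summing over the $k-1$ scales by Cauchy--Schwarz. Globally, $\|h\|^2\ge A_{k-1}\|c^{\mathrm{old}}\|^2$ by hypothesis and $\|g\|^2\le B'\|c^{(k)}\|^2$ by the upper bound for $\{e^{-2\pi i\lambda_n x}\hat\psi(x)\}$.

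\textbf{Case split.} Compare $\|c^{(k)}\|^2$ with a multiple $t\,\|c^{\mathrm{old}}\|^2$, where $t\approx A_{k-1}/(4B')$.
\begin{itemize}
\item If the new coefficients are small, then $\|f\|\ge\|h\|-\|g\|\ge \tfrac12\sqrt{A_{k-1}}\,\|c^{\mathrm{old}}\|$, which controls the total.
\item Otherwise, on $J_k$ we get $\|f\|\ge d_1\sqrt{A}\|c^{(k)}\|-d_2\sqrt{(k-1)B}\|c^{\mathrm{old}}\|$. Using $d_2/d_1\le r_k$ and the smallness condition $r_k\le AA_{k-1}/(16BB'(k-1))$, the second term is absorbed into half of the first.
\end{itemize}
Combining the two cases and tracking constants yields a recursion of the form $A_k\gtrsim d_1^2AA_{k-1}/(16(\sqrt{r_k}+1)^2B')$, which matches the hypothesis on $A_k$. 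Strict positivity of $A_{j_0}$ gives linear independence.

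\textbf{Main obstacle.} The delicate part is the bookkeeping in the inductive step: choosing the split threshold and the band $J_k$ so that the cross term is absorbed with exactly the stated constants $16$ and $(\sqrt{r_k}+1)^2$. In the second case the $\sqrt{r_k}$ factor arises from balancing the split parameter against the tail estimate, and I would optimise that parameter explicitly to reproduce the stated form.
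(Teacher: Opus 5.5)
The paper gives no proof of this statement; it only quotes it from \cite{Christensen2001Wavelet}. Your argument therefore has to stand on its own. Its architecture is sound: induction on the number of scales, isolating one scale on a frequency band, and a two-case split on the coefficient norms. But the induction runs in the wrong direction, and as written the cross-term estimate fails.

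Since $\widehat{\psi_{j,n}}(\xi)=a_j^{1/2}e^{-2\pi i\lambda_n a_j\xi}\,\hat\psi(a_j\xi)$, ordering $a_1<\dots<a_{j_0}$ makes the scale you add the \emph{coarsest}. On $J_k=(I+s(r_k))/a_k$, every older dilate is then evaluated at $a_i\xi\le a_k\xi/a$, i.e.\ at \emph{smaller} arguments. The only smallness hypothesis, however, is $|\hat\psi(x)|\le d_2$ for $x>(c+s)a$, which is a decay condition at high frequencies. In fact, since $r_k<1$, the two hypotheses force $I+s(r_k)\subseteq[c+s,(c+s)a]$, so $a_i\xi\le c+s<(c+s)a$ on all of $J_k$. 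Nothing controls $\hat\psi(a_i\xi)$ there, so the bound $\|\hat h\|_{L^2(J_k)}\le\sqrt{(k-1)B}\,d_2\|c^{\mathrm{old}}\|$ is unjustified.

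The fix is to order $a_1>a_2>\dots>a_{j_0}$, so that each new scale is the \emph{finest}. Then for $\xi\in J_k$ and $i<k$ you get $a_i\xi\ge a\,a_k\xi\ge a\,(c+s(r_k))$, with $c+s(r_k)>0$; this is exactly what the hypothesis $s(r)>-c$ is for. Moreover, $a_iJ_k$ is an interval of length $\frac{a_i}{a_k}|I|\le K|I|$. Riesz bounds of exponentials depend only on the interval's length and increase with it, so the bound $B$ on $L^2(KI)$ applies. Your appeal to $K$ only makes sense in this corrected order. (Step $k$ also produces $d_1(r_k)$, so read the $d_1^2(r)$ in the recursion as $d_1^2(r_k)$.)

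Once the order is fixed, the constants are not the obstacle you anticipate. Let $u=\|c^{\mathrm{old}}\|$, $v=\|c^{(k)}\|$, and split at $v=\frac12\sqrt{A_{k-1}/B'}\,u$.
\begin{itemize}
\item Your first case gives $\|f\|^2\ge\frac{A_{k-1}}{4}u^2\ge\frac{A_{k-1}}{5}\|c\|^2$.
\item In the second case, $u<2\sqrt{B'/A_{k-1}}\,v$. Together with $d_2\le r_kd_1$ and $r_k\le AA_{k-1}/(16(k-1)BB')$, this bounds the cross term by $\frac18 d_1\sqrt{A}\,v$. Hence $\|f\|^2\ge\frac{49}{64}\,\frac{d_1^2AA_{k-1}}{A_{k-1}+4B'}\,\|c\|^2$.
\end{itemize}
These steps use $A\le B$, $A_{k-1}\le\|\psi\|^2\le B'$ and $d_1^2A\le B'$, each obtained by testing the relevant bound on a single coefficient.

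Both lower bounds dominate $\frac{d_1^2AA_{k-1}}{16B'}\|c\|^2\ge\frac{d_1^2AA_{k-1}}{16(\sqrt{r_k}+1)^2B'}\|c\|^2$. So every admissible $A_k$ is a lower Riesz bound, hence a lower frame bound on the span, and linear independence follows. No optimisation of the split is needed to reproduce the factor $(\sqrt{r_k}+1)^2$; for your argument it is pure slack.
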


\begin{remark}
Lower bounds $A$ for frames of exponentials $\{e^{-2\pi i \lambda_n x}\}_{n=1}^N$ in $L^2(I)$ have been found in \cite[Proposition 2.3]{Christensen2001Exponential}. They are exponentially small in the number of translation parameters $N$, which makes these bounds unsuitable for our purposes. It should be noted that these lower bounds are provided for not necessarily uniformly distributed translation parameters $\lambda_i$'s, unlike our uniform translates $\lambda_i$. 

Also, if $\{\psi_{j,n}\}_{j=1,n=1}^{j_0,N}$ is a subfamily of a frame $\{\psi_{j,n}\}_{j \in \mathbb{N},n \in \mathbb{Z}}$ with upper frame bound $C$ then we can use $B' = C$, which is a bound independent of $N$. 
\end{remark}

The following theorem gives a simpler set of conditions for bandlimited wavelets. This case is especially relevant here because the known directional wavelets with good nonlinear approximation rates like shearlets and curvelets that form tight frames are bandlimited (see \cite{Guo2007, CandesDonoho2004} and Appendix~\ref{shearletAppendix}).

\begin{theorem}[{\cite[Theorem 2.4]{Christensen2001Wavelet}}] \label{waveletLowerBound}
Let $a_1, \ldots, a_{j_0}$ be a finite sequence of positive numbers logarithmically separated by some $a > 1$ and let $\lambda_1, \ldots, \lambda_N$ be a finite sequence of separated real numbers. Let $\psi \in L^2(\mathbb{R})$ and suppose that $\supp(\hat{\psi}) \subseteq (-\infty, p]$ for some $p > 0$ and that there is a non-degenerate interval $I \subseteq [p/a, p]$ and a positive number $d$ such that
$$
|\hat{\psi}(x)| \geq d \quad \forall x \in I.
$$
Denote a lower bound for $\{e^{-2\pi i \lambda_n x}\}_{n=1}^N$ in $L^2(I)$ by $A$ and an upper bound for $\{e^{-2\pi i \lambda_n x} \hat{\psi}(x)\}_{n=1}^N$ in $L^2(\mathbb{R})$ by $B'$. Then $\{\psi_{j,n}\}_{j=1,n=1}^{j_0,N}$ is linearly independent with lower frame/Riesz bound
$$
A_{j_0} = d^2A\bigg(\frac{d^2A}{16B'}\bigg)^{j_0-1}.
$$
\end{theorem}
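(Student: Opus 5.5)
This result is quoted from \cite[Theorem 2.4]{Christensen2001Wavelet}, but I would prove it by induction on the number of scales $j_0$. I would order the scales so that $a_1<a_2<\dots<a_{j_0}$, with $a_{k+1}\ge a\,a_k$. Write $\psi_{j,n}$ on the Fourier side:
\[
\widehat{\psi_{j,n}}(\xi)=a_j^{1/2}\hat\psi(a_j\xi)e^{-2\pi i a_j\lambda_n\xi},
\]
whose support lies in $(-\infty,p/a_j]$. The key observation is that on the set $I/a_j\subseteq[p/(a a_j),p/a_j]$ the function $\hat\psi(a_j\cdot)$ is bounded below by $d$. Meanwhile, every coarser-frequency element, i.e.\ every element with $a_k>a_j$, is supported in $(-\infty,p/a_k]\subseteq(-\infty,p/(a a_j)]$, so it vanishes on $I/a_j$ up to endpoints. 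This gives the triangular structure needed for a Riesz-type lower bound.

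\textbf{Base case.} For a single scale, take any $c\in\mathbb{C}^N$ and restrict to $I/a_1$:
\[
\Bigl\|\sum_n c_n\psi_{1,n}\Bigr\|^2\ge d^2\int_{I}\Bigl|\sum_n c_n e^{-2\pi i\lambda_n x}\Bigr|^2dx\ge d^2A\|c\|^2 .
\]
The first inequality uses the change of variables $x=a_1\xi$ and the lower bound for $\hat\psi$. The second uses the lower Riesz bound $A$ of the exponentials on $L^2(I)$; for a finite linearly independent family this coincides with the frame bound, consistent with \Cref{frameBound}.

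\textbf{Inductive step.} Write $f=g+h$, where $g$ collects the terms at the finest-frequency scale $a_1$ and $h$ those at the remaining $j_0-1$ scales. The remaining scales are again logarithmically separated, so induction applies to them with bound $A_{j_0-1}$. Restricting to $I/a_1$, where $h$ vanishes, gives $\|f\|^2\ge d^2A\|c^{(1)}\|^2$. On the other hand,
\[
\|f\|\ge\|h\|-\|g\|\ge\sqrt{A_{j_0-1}}\,\|c^{(\ge2)}\|-\sqrt{B'}\,\|c^{(1)}\| ,
\]
using the upper bound $B'$, which is dilation invariant. Then I would split into two cases. If $\sqrt{B'}\|c^{(1)}\|\le\tfrac12\sqrt{A_{j_0-1}}\|c^{(\ge2)}\|$, the second estimate gives $\|f\|^2\ge\tfrac14A_{j_0-1}\|c^{(\ge2)}\|^2$. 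Otherwise $\|c^{(\ge2)}\|^2\le 4B'\|c^{(1)}\|^2/A_{j_0-1}$, and the first estimate controls everything. Combining the two cases with the constants balanced should give
\[
A_{j_0}\ge \frac{d^2A}{16B'}\,A_{j_0-1}
\]
(up to harmless reorganisation), and iterating from the base case yields $A_{j_0}=d^2A\bigl(d^2A/16B'\bigr)^{j_0-1}$. Linear independence then follows, since the lower bound is positive.

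\textbf{Main obstacle.} The delicate step is the constant bookkeeping in the case split, so that exactly the factor $16$ appears rather than something weaker. I would first try a weighted convex combination $\|f\|^2\ge\theta(\cdot)+(1-\theta)(\cdot)$ of the two lower bounds, optimising $\theta$, together with $d^2A\le B'$, which holds since $d^2A\le\|\hat\psi\|^2$-type bounds are dominated by $B'$. That inequality is used to simplify the minimum to the stated product form.
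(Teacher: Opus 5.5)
The paper gives no proof of this statement; it is quoted from Christensen--Lindner. Your argument therefore has to stand on its own, and it does. The following steps are all correct:
\begin{itemize}
\item the Fourier-side formula;
\item the fact that all scales $a_k\ge a\,a_1$ vanish a.e.\ on $I/a_1$;
\item the base case;
\item your two estimates $\|f\|^2\ge d^2A\|c^{(1)}\|^2$ and $\|f\|\ge\sqrt{A_{j_0-1}}\|c^{(\ge2)}\|-\sqrt{B'}\|c^{(1)}\|$.
\end{itemize}

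What you call the main obstacle is not one. The $16$ has slack, and your plain case split already beats it, so no convex-combination optimisation is needed. You do need two inequalities, and you should prove them rather than gesture at them.
\begin{itemize}
\item $d^2A\le B'$. This follows by testing a single scale: $d^2A\|c\|^2\le\int_I|\hat\psi(x)|^2\bigl|\sum_n c_ne^{-2\pi i\lambda_nx}\bigr|^2dx\le B'\|c\|^2$.
\item $A_{j_0-1}\le d^2A\le B'$, which follows from the formula for $A_{j_0-1}$ and the first inequality.
\end{itemize}

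Write $x=\|c^{(1)}\|$ and $y=\|c^{(\ge2)}\|$.
\begin{itemize}
\item In your first case, $x^2\le\frac{A_{j_0-1}}{4B'}y^2\le\frac14y^2$. Hence $\|f\|^2\ge\frac14A_{j_0-1}y^2\ge\frac15A_{j_0-1}(x^2+y^2)$.
\item In the second case, $x^2+y^2<\bigl(1+\frac{4B'}{A_{j_0-1}}\bigr)x^2\le\frac{5B'}{A_{j_0-1}}x^2$. Hence $\|f\|^2\ge d^2Ax^2\ge\frac{d^2A}{5B'}A_{j_0-1}(x^2+y^2)$.
\end{itemize}
Using $d^2A\le B'$ once more, both cases give $A_{j_0}\ge\frac{d^2A}{5B'}A_{j_0-1}\ge\frac{d^2A}{16B'}A_{j_0-1}$. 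This closes the induction.

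Compared with the source, your route uses the \emph{exact} vanishing of coarser scales on $I/a_1$. That gives a self-contained proof with the sharper factor $5$. The $16$ in the statement is what the recursion of the general (non-bandlimited) theorem earlier in the same appendix reduces to as $r_k\to0$: that recursion reads $A_k\le d_1^2AA_{k-1}/(16(\sqrt{r_k}+1)^2B')$. In that setting $\hat\psi$ is only assumed small beyond $(c+s(r))a$. So coarser scales do not vanish on the test interval of a finer scale; they are merely bounded by $d_2(r)$, and their contribution must be paid for. Your argument is simpler and sharper in the bandlimited case. It does not extend to that more general setting without adding such a tail estimate.
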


\printbibliography

\end{document}